\newtheorem{lem}{Lemma}[section] 
\newtheorem{prp}[lem]{Proposition}
\newtheorem{thm}[lem]{Theorem} 
\newtheorem{crl}[lem]{Corollary} 
\newtheorem{defi}[lem]{Definition} 
\newtheorem{nota}[lem]{Notation}
\newtheorem{rque}[lem]{Remark}
\newtheorem{exe}[lem]{Example} 
\numberwithin{equation}{section}
\newcommand\N{\mathbb{N}}
\newcommand\Z{\mathbb{Z}}
\newcommand\Pol{\text{Pol}}
\DeclareMathOperator{\supp}{supp}
\DeclareMathOperator{\id}{id}
\DeclareMathOperator{\Lin}{Lin}
\title[FUSION RULES OF SOME FREE WREATH PRODUCTS BY $S_N^+$ AND APPLICATIONS]{\textbf{The fusion rules of some free wreath product quantum groups and applications}}
\author{Fran\c cois Lemeux}
\address{Fran\c cois Lemeux, Laboratoire de math\'ematiques de Besan\c con, UFR Sciences et Techniques, Universit\' e de Franche-Comt\' e,16 route de Gray, 25000 Besan\c con, France}
\curraddr{}
\email{francois.lemeux@univ-fcomte.fr}
\thanks{}
\begin{document}
\maketitle

\begin{abstract}
In this article we find the fusion rules of the free wreath product quantum groups $\widehat{\Gamma}\wr_*S_N^+$ for any discrete group $\Gamma$. To do this we describe the spaces of intertwiners between basic corepresentations which allows us to identify the irreducible corepresentations. We then apply the knowledge of the fusion rules to prove, in most cases, several operator algebraic properties of the associated reduced $C^*$-algebras such as simplicity and uniqueness of the trace. We also prove that the associated von Neumann algebra is a full type $II_1$-factor and that the dual of $\widehat{\Gamma}\wr_*S_N^+$ has the Haagerup approximation property for all finite groups $\Gamma$.
\end{abstract}

\section*{Introduction}
One motivation for the construction of quantum groups was the generalization of Pontrjagin duality to non-abelian locally compact groups: if $G$ is an abelian locally compact group then the set of characters $\widehat{G}$ is an abelian locally compact group again and and the bidual is isomorphic with $G$. Of course if $G$ is not abelian, one can not expect that this latter property holds and then one has to look for a larger category, the one of quantum groups, that includes locally compact groups and their (generalized) duals. In \cite{ES75} Enock and Schwartz  and in \cite{VK74} Va{\u\i}nerman and Kac, defined the notion of Kac algebra in the setting of von Neumann algebras. The notion of the dual $\widehat{A}$ of a Kac algebra $A$ was also introduced in this article and it was proved that the bidual of such a Kac algebra is isomorphic with $A$. Kac algebras are endowed with the same structural maps as Hopf-algebras (coproduct, antipode, counit). A $C^*$-algebraic theory and analogue results were proved in this setting, see e.g. \cite{EV93}. These algebras together with their structural maps are examples of quantum groups.

In \cite{Wor87b} Woronowicz constructed a $C^*$-algebra associated to the so called twisted $SU_q(2)$ quantum groups, with a coproduct but with an unbounded antipode which is not a $*$-anti-automorphism as in the case of Kac algebras. So the category of Kac algebras appeared not large enough to contain all interesting examples of quantum groups. In \cite{Wor87}, \cite{Wor98}, Woronowicz introduced a general theory of compact quantum groups in the setting of $C^*$-algebras. This construction includes also the Drinfeld-Jimbo type quantum groups. Under minimal assumptions, the existence and uniqueness of a Haar state could be proved and a Peter-Weyl representation theory of compact quantum groups could be developed, very close to the one for (classical) compact groups. Moreover, as in the classical case the dual $\widehat{\mathbb{G}}$ of a compact quantum group $\mathbb{G}$ is a discrete quantum group i.e. the underlying $C^*$-algebra is direct sum of matrix algebras.

In \cite{Wan93} and \cite{Wang2}, Wang constructed examples of compact quantum groups $U_N^+$ and $O_N^+$ called the free unitary and free orthogonal quantum groups and introduced also, in \cite{Wang}, the quantum permutation groups $S_N^+$. We recall the definitions of the underlying Woronowicz-$C^*$-algebras:

\begin{enumerate}
\item[$\bullet$] $C(U_N^+)=C^*-\langle u_{ij} :\ 1\le,i,j\le N |\ (u_{ij})_{ij} \text{ and } (u_{ij}^*)_{ij} \text{ are unitaries}\rangle$
\item[$\bullet$] $C(O_N^+)=C^*-\langle o_{ij} :\ 1\le,i,j\le N |\ o_{ij}^*=o_{ij} \text{ and } (o_{ij})_{ij} \text{ is unitary}\rangle$
\item[$\bullet$] $C(S_N^+)=C^*-\langle v_{ij} :\ 1\le,i,j\le N |\ (v_{ij})_{ij }\text{ is a magic unitary}\rangle$
\end{enumerate}
which are ``free" versions of the commutative $C^*$-algebras of functions $C(U_N)$, $C(O_N)$, $C(S_N)$. These compact quantum groups were studied by Banica and others. He described in \cite{MR1484551}, \cite{Ban96} and \cite{Ban05}, their irreducible corepresentations and the fusion rules binding them i.e. the way that the tensor products of two irreducible corepresentations decomposes into irreducibles. This work laid the foundations for the study of the geometric, analytic and combinatoric properties of these quantum groups.


Later, new examples of compact quantum groups appeared. Banica and Speicher introduced the notion of easy quantum groups, \cite{BS09}. They are compact quantum groups whose Woronowicz-$C^*$-algebras are generated by a unitary matrix (with additional properties). Their intertwiner spaces have a combinatorial description in terms of partitions. These compact quantum groups cover the basic examples $O_N^+, S_N^+$ we mentioned above and include new ones. More recently, Weber \cite{Web13}, Raum and Weber \cite{WebR12}, Freslon and Weber \cite{FreW13}, investigated these ``combinatorial" quantum groups in order to classify them.

In \cite{Bic04}, Bichon introduced the notion of free wreath product $A*_wC(S_N^+)$ for any Woronowicz-$C^*$-algebra $A$ (see \cite{Bic04}). He proved that when $\mathbb{G}=(A,\Delta)$ is a compact quantum group, $\mathbb{G}\wr_*S_N^+=(A*_w C(S_N^+),\Delta)$ is again a compact quantum group. In \cite{BV09}, Banica and Vergnioux studied the quantum reflection groups $H_N^{s+}$, another family of compact quantum groups introduced in 2007, see \cite{BBCC11} for the definition and first properties. In particular, they proved an isomorphism of compact quantum groups $H_N^{s+}\simeq\widehat{\Z_s}\wr_*S_N^+$. 

There is no description of the fusion rules of $\mathbb{G}\wr_*S_N^+$ in general. When $\mathbb{G}$ is the dual of $\Z_s:=\Z/s\Z$ or $\Z$, we recalled above that the free wreath products $\widehat{\Z_s}\wr_*S_N^+$ correspond to $H_N^{s+}, s\in[1,\infty]$. In \cite{BV09}, Banica and Vergnioux found the fusions rules of $H_N^{s+}$ (for $s\in[1,\infty]$ and $N\ge4$). In this article, we generalize the description of the fusion rules of $\widehat{\Z_s}\wr_*S_N^+$ to the free wreath products $\widehat{\Gamma}\wr_*S_N^+$ (with the notation above, $A=C^*(\Gamma)$) for \textit{all} discrete groups $\Gamma$. This provides a whole new class of compact quantum groups with an explicit description of the fusion rules. 


Another aspect of this work is to pursue the study of the operator algebras associated to compact quantum groups. Banica started it in \cite{MR1484551} by proving the simplicity of $C_r(U_N^+)$. Vergnioux proved in \cite{Ver05O} the property of Akemann-Ostrand for $L^{\infty}(U_N^+)$ and $L^{\infty}(O_N^+)$ and together with Vaes proved the factoriality, fullness and exactess for $L^{\infty}(O_N^+)$ in \cite{VV07}. More recently, in \cite{Bra11} and \cite{Bra12}, Brannan proved the Haagerup property for $L^{\infty}(O_N^+)$, $L^{\infty}(U_N^+)$ and $L^{\infty}(S_N^+)$. Freslon proved the weak-amenability of $L^{\infty}(O_N^+)$, $L^{\infty}(U_N^+)$ in \cite{Fre13} and together with De Commer and Yamashita proved the weak amenability for $L^{\infty}(S_N^+)$ in \cite{CFY13}. In each of these results, the knowledge of the fusion rules of the compact quantum groups is a crucial tool to prove the properties of the associated reduced $C^*$-algebras and von Neumann algebras.

From the results proved in this article arise the following questions: 
\begin{enumerate}
\item[$\bullet$] Is it true that if $\Gamma$ is a discrete group with the Haagerup property, then the dual of $H_N^+(\Gamma)\simeq\widehat{\Gamma}\wr_*S_N^+$ has the Haagerup property ?
\item[$\bullet$] Is it true that the dual of $\widehat{\Gamma}\wr_*S_N^+$ is weakly-amenable ?
\item[$\bullet$] Which other algebraic operator properties possess $C_r(H_N^+(\Gamma))$, $L^{\infty}(H_N^+(\Gamma))$ (bi-exactness, property RD etc.) ?
\item[$\bullet$] Can one compute the fusion rules of wreath products $\mathbb{G}\wr_*S_N^+$ if $\mathbb{G}$ is a compact quantum group with known fusion rules ?
\end{enumerate}

This article is organized as follows. The first section is dedicated to recall some definitions and general results on compact quantum groups. It contains the definitions of the quantum permutation groups $S_N^+$, of the free wreath products $H_N^+(\Gamma)\simeq \widehat{\Gamma}\wr_*S_N^+$ and the special case of quantum reflection groups.

In the second section, we recall the concept of Tannaka-Krein duality and we describe the intertwiner spaces of certain basic corepresentations in $H_N^+(\Gamma)$ using a canonical homomorphism from the universal algebra of a certain free product of compact quantum groups onto $C(H_N^+(\Gamma))$. This allows us to compute the fusion rules between the irreducible corepresentations of $H_N^+(\Gamma)$.

In the third section, we give several applications of this description of the fusion rules:
\begin{enumerate}
\item[$\bullet$] We prove the simplicity and the uniqueness of the trace of the reduced $C^*$-algebra $C^*_r(H_N^+(\Gamma))$ for all discrete groups $|\Gamma|\ge2$ and all $N\ge8$ (in particular $L^{\infty}(H_N^+(\Gamma))$ is a $II_1$-factor). We adapt a variant of Powers' methods used by Banica in \cite{MR1484551} and we use the simplicity of $C_r(S_N^+)$ for all $N\ge8$ proved in \cite{Bra12}.
\item[$\bullet$] We show the fullness of the $II_1$-factor $L^{\infty}(H_N^+(\Gamma))$ for all $N\ge8$ and any discrete group $\Gamma$. The proof is adapted from the ``$14-\epsilon$ method" which is used in the classical proof of the fact that $L^{\infty}(F_n)$ does not have the property $\Gamma$. This application is based on work by Vaes for the fullness of $L^{\infty}(U_N^+)$ which can be found in an appendix to \cite{CFY13}.
\item[$\bullet$] We finish our article by extending the main result in \cite{Lem13} by proving that the duals of the free wreath products $\widehat{\Gamma}\wr_*S_N^+$ have the Haagerup property for all \textit{finite} groups $\Gamma$ and all $N\ge4$.
\end{enumerate}

\section{Preliminaries}\label{preliminaries}
\subsection{Quantum groups and representation theory}
The definition and basics on compact quantum groups go back to Woronowicz. In this first section we recall a few facts and results about compact quantum groups. One can refer to \cite{Wor98} for a nive survey of the notions needed in this article. We also recommend the book \cite{Tim08}.

A compact quantum group is a pair $\mathbb{G}=(C(\mathbb{G}),\Delta)$ where $C(\mathbb{G})$ is a unital Woronowicz-$C^*$-algebra: $C(\mathbb{G})$ is a unital $C^*$-algebra endowed with a unital $*$-homorphism $\Delta : C(\mathbb{G})\to C(\mathbb{G})\otimes_{\min} C(\mathbb{G})$ which is coassociative $(\text{id}\otimes\Delta)\circ\Delta=(\Delta\otimes \text{id})\circ\Delta$ and such that we have the cancellation property, i.e. $\text{span}\{\Delta(a)(b\otimes 1) : a,b\in C(\mathbb{G})\}$ and $\text{span}\{\Delta(a)(1\otimes b) : a,b\in C(\mathbb{G})\}$ are norm dense in $C(\mathbb{G})\otimes C(\mathbb{G})$. These assumptions allow to prove the existence and uniqueness of a Haar state $h : C(\mathbb{G})\to \mathbb{C}$ satisfying the bi-invariance relations $(h\otimes \text{id})\circ\Delta(\cdot)=h(\cdot)1=(\text{id}\otimes h)\circ\Delta(\cdot)$, see \cite[Theorem 1.3]{Wor98}. In this article we will deal with compact quantum groups of Kac type, that is their Haar state $h$ is a trace. We will also deal with several compact \textit{matrix} quantum groups, namely the quantum permutation groups $S_N^+$ and the quantum reflection groups $H_N^{s+}$, see Section \ref{qpgrappel} and \ref{qrgrappel}. A compact matrix quantum group $\mathbb{G}=(C(\mathbb{G}),\Delta)$ is a compact quantum group whose underling Woronowicz-$C^*$-algebra $C(\mathbb{G})$ is generated by the coefficients of a generating unitary matrix with coefficients in $C(\mathbb{G})$, see \cite{Wor87}.

One can consider the GNS representation $\lambda_h : C(\mathbb{G})\to B(L^2(\mathbb{G},h))$ associated to the Haar state $h$ of $\mathbb{G}=(C(\mathbb{G}),\Delta)$ called the left regular representation. We will denote by $\Lambda_h$ the GNS map $\Lambda_h : C(\mathbb{G})\to L^2(\mathbb{G},h)$. The reduced $C^*$-algebra associated to $\mathbb{G}$ is then defined by $C_r(\mathbb{G})=\lambda_h(C(\mathbb{G}))\simeq C(\mathbb{G})/\ker(\lambda_h)$ and the von Neumann algebra by $L^{\infty}(\mathbb{G})=C_r(\mathbb{G})''$. One can prove that $C_r(\mathbb{G})$ is again a Woronowicz-$C^*$-algebra whose Haar state extends to $L^{\infty}(\mathbb{G})$. We will denote simply by $\Delta$ and $h$ the coproduct and Haar state on $C_r(\mathbb{G})$. 

An $N$-dimensional (unitary) corepresentation $u=(u_{ij})_{ij}$ of $\mathbb{G}$ is a (unitary) matrix $u\in M_N(C(\mathbb{G}))\simeq C(\mathbb{G})\otimes B(\mathbb{C}^N)$ such that for all $i,j\in\{1,\dots,N\}$, one has $$\Delta(u_{ij})=\sum_{k=1}^Nu_{ik}\otimes u_{kj}.$$ 
An intertwiner between two corepresentations $$u\in M_{N_u}(C(\mathbb{G})) \text{ and } v\in M_{N_v}(C(\mathbb{G}))$$ is a matrix $T\in M_{N_u,N_v}(\mathbb{C})$ such that $v(1\otimes T)=(1\otimes T)u$.
We say that $u$ is equivalent to $v$, and we note $u\sim v$, if there exists an invertible intertwiner between $u$ and $v$. We denote by $\text{Hom}_{\mathbb{G}}(u,v)$ the space of intertwiners between $u$ and $v$. A corepresentation $u$ is said to be irreducible if $\text{Hom}_{\mathbb{G}}(u,u)=\mathbb{C} \text{id}$. We denote by $\text{Irr}(\mathbb{G})$ the set of equivalence classes of irreducible corepresentations of $\mathbb{G}$.

We recall that $C(\mathbb{G})$ contains a dense $*$-subalgebra denoted by $\Pol(\mathbb{G})$ which is linearly generated by the coefficients of the irreducible corepresentations of $\mathbb{G}$. The coefficients of a corepresentation $u$ of $\mathbb{G}$ acting on a Hilbert space $H_u$ are given by $(\text{id}\otimes\phi)(u)$ for some functional $\phi\in B(H_u)^*$. This algebra has a Hopf-$*$-algebra structure and in particular there exists a $*$-antiautomorphism $\kappa : \Pol(\mathbb{G})\to \Pol(\mathbb{G})$, called the antipode, which acts on the coefficients of the irreducible unitary corepresentations by $\kappa(u_{ij})=u_{ji}^*$.

If $u$ is a corepresentation of $\mathbb{G}$ acting on a Hilbert space of dimension $N$, the matrix $\overline{u}=(u_{ij}^*)\in M_N(C(\mathbb{G}))$ is again a corepresentation of $\mathbb{G}$. It is called the conjugate of $u$. In general it is not necessarily unitary, even if $u$ is. Recall that $u$ is unitary if and only if on each coefficient, we have $\kappa(u_{ij})=u_{ji}^*$, see e.g. \cite[Proposition 3.1.7]{Tim08}. Note that all the compact quantum groups we will deal with are of Kac type. In this case, the conjugate of a unitary corepresentation is also unitary since in this case the antipode satisfies $\kappa^2=\text{id}$. One can refer to \cite{BS93} for the proofs of the equivalence of the Kac characterizations we gave, namely the fact that the Haar state is a trace and the fact that $\kappa^2=\text{id}$.

The algebra $Pol(\mathbb{G})$ algebra is also dense in $L^2(\mathbb{G},h)$. Since $h$ is faithful on the $*$-algebra $\Pol(\mathbb{G})$, one can identify $\Pol(\mathbb{G})$ with its image in the GNS-representation $\lambda_h(C(\mathbb{G}))$. We will denote by $\chi_r$ the character of the irreducible corepresentation $r\in \text{Irr}(\mathbb{G})$, that is $\chi_r=(\text{id}\otimes \text{Tr})(r)$. 

In the Kac type case, the right regular representation of a compact quantum groups $\mathbb{G}=(C(\mathbb{G},\Delta)$, $\rho_h : C(\mathbb{G})\to B(L^2(\mathbb{G},h))$ is given by $\rho_h(x)\Lambda_h(y)=\Lambda_h(y\kappa(x))$. It commutes with $\lambda_h$ and one can consider the adjoint representation on $Pol(\mathbb{G})$, $$(\lambda_h,\rho_h)\circ\Delta : \Pol(\mathbb{G})\to B(L^2(\mathbb{G},h)).$$ This representation acts on the irreducible characters as follows $$\text{ad}(\chi_r)(z)=\sum_{ij}r_{ij}z\kappa(r_{ji})=\sum_{ij}r_{ij}zr_{ij}^*.$$ Notice that the map $z\mapsto \text{ad}(\chi_r)(z)$ is completely positive for all $r\in \text{Irr}(\mathbb{G})$.



In \cite{MR1484551}, Banica uses the notion of support of an element $x\in \Pol(\mathbb{G})$. The support of $x\in \Pol(\mathbb{G})$ is denoted $\supp(x)$ and defined as the smallest subset $G\subset \text{Irr}(\mathbb{G})$ such that $x$ is a linear combination of certain coefficients of elements $r\in G$. In other words, $$r\notin \supp(x)\Leftrightarrow h(xr_{ij}^*)=0, \text{ for all coefficients } r_{ij} \text{ of } r.$$

\subsection{Quantum permutation groups}\label{qpgrappel}
A fundamental and basic family of examples of compact quantum groups was introduced by Wang. It is recalled in the following definition:

\begin{defi}\label{permw}(\cite{Wang})
Let $N\ge2$. $S_N^+$ is the compact quantum group $(C(S_N^+),\Delta)$ where  
$C(S_N^+)$ is the universal $C^*$-algebra generated by $N^2$ elements $v_{ij}$ such that the matrix $v=(v_{ij})$ is unitary and $v_{ij}=v_{ij}^*=v_{ij}^2, \forall i,j$ (i.e. $v$ is a magic unitary) and such that
the coproduct $\Delta$ is given by the usual relations making of $v$ a finite dimensional corepresentation of $C(S_N^+)$, that is $\Delta(v_{ij})=\sum_{k=1}^Nv_{ik}\otimes v_{kj}$, $\forall i,j$.
\end{defi}

In the cases $N=2,3$, one obtains the usual algebras $C(\Z_2), C(S_3)$ since a magic unitary of size $2$ (respectively 3) is composed of commuting projections as one can see using the Fourier transformation over $\Z_2$, resp. $\Z_3$.  
If $N\ge4$, one can find an infinite dimensional quotient of $C(S_N^+)$ so that $C(S_N^+)$ is not isomorphic to $C(S_N)$, see e.g. \cite{Wang}, \cite{Ban05}.

The corepresentation theory of $S_N^+$ was found by Banica. It is recalled in the following theorem:

\begin{thm}(\cite{Ban99})
There is a maximal family $\left(v^{(t)}\right)_{t\in\N}$ of pairwise inequivalent irreducible finite dimensional unitary representations of $S_N^+$ such that: 
\begin{enumerate}
\item $v^{(0)}$ is the trivial representation and the (fundamental) corepresentation $v$ is equivalent to $v^{(0)}\oplus v^{(1)}$.
\item The contragredient of any $v^{(t)}$ is equivalent to itself that is $\overline{v^{(t)}}\simeq v^{(t)},\ \forall t\in\N$.
\item The fusion rules are the same as for $SO(3)$: $$v^{(s)}\otimes v^{(t)}\simeq\bigoplus_{k=0}^{2\emph{min(s,t)}}v^{(s+t-k)}$$
\end{enumerate}
\end{thm}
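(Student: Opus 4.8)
The plan is to establish the corepresentation theory of $S_N^+$ via Tannaka–Krein duality, following Banica's original approach. The key realization is that the intertwiner spaces of the tensor powers of the fundamental corepresentation $v$ are governed by \emph{non-crossing partitions}, and that these assemble into a tensor category whose fusion structure is identical to that of the classical rotation group $SO(3)$ (equivalently, the spin representations of $SU(2)$ with the half-integer spins removed).

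\textbf{Step 1: Compute the intertwiner spaces of tensor powers of $v$.} The magic unitary relations $v_{ij}=v_{ij}^*=v_{ij}^2$ together with the row/column sum conditions $\sum_k v_{ik}=\sum_k v_{kj}=1$ translate, on the side of intertwiners, into a combinatorial description: one shows that $\mathrm{Hom}_{S_N^+}(v^{\otimes s},v^{\otimes t})$ is spanned by the linear maps $T_p$ indexed by non-crossing partitions $p$ of the set of $s+t$ points, where $T_p$ is the natural map assigning to a partition its associated tensor. The first task is to verify that each such $T_p$ is indeed an intertwiner (a direct check using the defining relations) and, conversely—this is the substance of Tannaka–Krein—that these exhaust the intertwiner space. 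One then computes dimensions: for $N\ge 4$ the maps $\{T_p : p \text{ non-crossing}\}$ are linearly independent, so $\dim \mathrm{Hom}_{S_N^+}(v^{\otimes k}, v^{\otimes k})$ equals the Catalan number $C_k$.

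\textbf{Step 2: Identify the irreducibles and build the fusion rules by induction.} Knowing that $\dim\mathrm{Hom}(\mathbf{1},v^{\otimes k})$ is a Catalan-type number matching the $SO(3)$ moments, I would define the $v^{(t)}$ inductively by the recursion that mimics the $SO(3)$ Clebsch–Gordan rule. Concretely, set $v^{(0)}=\mathbf{1}$ and define $v^{(1)}$ by $v \simeq v^{(0)}\oplus v^{(1)}$ (so $v^{(1)}$ is the complement of the fixed vector $\xi=\sum_i e_i$ in $v$), then define $v^{(t+1)}$ via the decomposition $v^{(1)}\otimes v^{(t)} \simeq v^{(t-1)}\oplus v^{(t)}\oplus v^{(t+1)}$. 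The engine of the whole argument is the Frobenius-reciprocity computation
\[
\dim\mathrm{Hom}_{S_N^+}(v^{(s)},v^{(t)}) = \dim\mathrm{Hom}_{S_N^+}(\mathbf{1}, v^{(s)}\otimes v^{(t)}),
\]
which, together with the Catalan count from Step 1, forces the multiplicities to be exactly those of $SO(3)$. One proves by induction on $s+t$ that the $v^{(t)}$ so defined are irreducible and pairwise inequivalent, and that the stated fusion formula holds; the base of the induction is the irreducibility of $v^{(1)}$, equivalent to $\dim\mathrm{Hom}(v^{(1)},v^{(1)})=1$, i.e. $\dim\mathrm{Hom}(v,v)=2=C_2$, which is exactly the $N\ge 4$ dimension count.

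\textbf{Step 3: Self-conjugacy and maximality.} Since each $T_p$ can be read upside-down, the category is rigid with every object self-dual; combined with irreducibility this yields $\overline{v^{(t)}}\simeq v^{(t)}$. Finally, because every $v^{(t)}$ appears inside some $v^{\otimes k}$ and the coefficients of $v$ generate $C(S_N^+)$, the family $(v^{(t)})_{t\in\N}$ is a complete (maximal) set of irreducibles, which gives the word ``maximal'' in the statement.

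\textbf{The main obstacle} is the conversion in Step~1 from the generating relations to the assertion that the non-crossing partition maps \emph{exhaust} the Hom-spaces and are linearly independent for $N\ge 4$. The spanning direction is the hard half of Tannaka–Krein: one must show the categorical span of $\{T_p\}$ is closed under tensor product, composition, and adjoints, and that the quantum group it reconstructs is exactly $S_N^+$ rather than a proper quotient. The linear independence (faithfulness of the partition functor) fails precisely for $N\le 3$—where extra commutation relations collapse the category to that of the classical group—so the hypothesis $N\ge 4$ must be used at exactly this point, and isolating where it enters the Gram-matrix nondegeneracy computation is the delicate part of the argument.
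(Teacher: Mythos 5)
This theorem is stated in the paper without proof (it is quoted from \cite{Ban99}), and your outline reproduces the standard argument from that source, which is precisely the machinery the paper recalls in Section 2: the non-crossing-partition description of $\mathrm{Hom}(v^{\otimes k},v^{\otimes l})$ together with Tutte's Gram-determinant computation giving linear independence of the maps $T_p$ for $N\ge 4$, followed by the inductive Clebsch--Gordan construction of the $v^{(t)}$ and a dimension count of fixed-point spaces against the $SO(3)$ fusion graph. The only slip is an indexing one: $\dim\mathrm{Hom}(v^{\otimes k},v^{\otimes k})$ equals the number of non-crossing partitions of $2k$ points, i.e.\ $C_{2k}$ rather than $C_k$ (consistent with your own later use of $\dim\mathrm{Hom}(v,v)=2=C_2$); this does not affect the argument.
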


\subsection{Free (wreath) product quantum groups}\label{qrgrappel}
In \cite{Wang2}, Wang defined the free product $\mathbb{G}=\mathbb{G}_1*\mathbb{G}_2$ of compact quantum groups. He showed that such a free product is still a compact quantum group and gave a description of the irreducible corepresentations of $\mathbb{G}$ as alternating tensor products of nontrivial irreducible corepresentations:

\begin{thm}\label{frprodcrucial}(\cite{Wang2}) Let $\mathbb{G}_1$ and $\mathbb{G}_2$ be compact quantum groups. Then the set $\emph{Irr}(\mathbb{G})$ of irreducible corepresentations of the free product of quantum groups $\mathbb{G}=\mathbb{G}_1*\mathbb{G}_2$ can be identified with the set of alternating words in $\emph{Irr}(\mathbb{G}_1)*\emph{Irr}(\mathbb{G}_2)$ and the fusion rules can be recursively described as follows:
\begin{enumerate}
\item[$\bullet$] If the words $x,y\in \emph{Irr}(\mathbb{G})$ end and start in $\emph{Irr}(\mathbb{G}_i)$ and $\emph{Irr}(\mathbb{G}_j)$ respectively with $j\ne i$ then $x\otimes y$ is an irreducible corepresentation of $\mathbb{G}$ corresponding to the concatenation $xy\in \emph{Irr}(\mathbb{G})$.
\item[$\bullet$] If $x=vz$ and $y=z'w$ with $z,z'\in \emph{Irr}(\mathbb{G}_i)$ then $$x\otimes y=\bigoplus_{1\ne t\subset z\otimes z'} vtw\oplus \delta_{\overline{z},z'}(v\otimes w)$$ where the sum runs over all non-trivial irreducible corepresentations $t\in \emph{Irr}(\mathbb{G}_i)$ contained in $z\otimes z'$.
\end{enumerate}
\end{thm}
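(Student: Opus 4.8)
The plan is to reduce the entire statement to a computation with the Haar state, exploiting the fact that the Haar state $h$ on $C(\mathbb{G})=C(\mathbb{G}_1)*C(\mathbb{G}_2)$ is the free product $h_1*h_2$ of the two Haar states. So my first step would be to establish this: by uniqueness of the Haar state it suffices to check that the free product state $h_1*h_2$ is bi-invariant for the coproduct $\Delta$, after which $\Pol(\mathbb{G}_1)$ and $\Pol(\mathbb{G}_2)$, embedded in $\Pol(\mathbb{G})$ through the canonical inclusions, are freely independent with respect to $h$. To each alternating word $w=r_1r_2\cdots r_n$, with $r_k\in\text{Irr}(\mathbb{G}_{i_k})$ nontrivial and $i_k\ne i_{k+1}$, I attach the corepresentation $\rho_w=r_1\otimes r_2\otimes\cdots\otimes r_n$ of $\mathbb{G}$, whose character is the product $\chi_w=\chi_{r_1}\chi_{r_2}\cdots\chi_{r_n}$ of elements drawn alternately from the two free subalgebras.

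Next I would record the orthogonality relation, valid in the Kac case, that for corepresentations $a,b$ one has $\dim\text{Hom}_{\mathbb{G}}(a,b)=h(\chi_a^*\chi_b)$, together with $\chi_a^*=\chi_{\overline{a}}$. The key free-probability input is then: a product of \emph{centered} elements taken alternately from the two free subalgebras has vanishing Haar state. Since $h(\chi_r)=\delta_{r,\mathrm{triv}}=0$ for every nontrivial irreducible $r$, all the characters $\chi_{r_k}$ appearing in a word are centered. The heart of the argument is an induction on total word length computing $h(\chi_w^*\chi_{w'})$, i.e.
\[
h\bigl(\chi_{r_n}^*\cdots\chi_{r_1}^*\,\chi_{r'_1}\cdots\chi_{r'_m}\bigr).
\]
If the innermost factors $r_1,r'_1$ lie in different free subalgebras the whole word is centered and alternating, so the value is $0$; if they lie in the same factor I decompose $\chi_{r_1}^*\chi_{r'_1}=\chi_{\overline{r_1}}\chi_{r'_1}=\sum_t m_t\,\chi_t$ over the irreducibles $t\subset\overline{r_1}\otimes r'_1$, noting that the trivial character occurs with multiplicity $\delta_{r_1,r'_1}$. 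The trivial summand collapses the word and strictly shortens it, feeding the induction, while every nontrivial $\chi_t$ keeps the word alternating and centered. Tracking this shows the value is $1$ exactly when $w=w'$ (all factors match, forcing $\rho_w$ irreducible) and $0$ otherwise (pairwise inequivalence).

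The fusion rules follow from the same mechanism applied to the unconjugated product $\chi_w\chi_{w'}$ for $w$ ending in $\text{Irr}(\mathbb{G}_i)$ and $w'$ starting in $\text{Irr}(\mathbb{G}_j)$: when $j\ne i$ the concatenation is already alternating and gives the single irreducible $xy$, while when $j=i$ the meeting characters multiply via $z\otimes z'$, producing the summands $vtw$ over nontrivial $t\subset z\otimes z'$ and, whenever $\overline{z}\simeq z'$, the trivial subrepresentation that collapses to the shorter word $v\otimes w$, which is the $\delta_{\overline{z},z'}$ term. Finally, for completeness I would invoke Peter--Weyl: the coefficients of all the $\rho_w$ linearly span $\Pol(\mathbb{G})$, since they generate it from $\Pol(\mathbb{G}_1)$ and $\Pol(\mathbb{G}_2)$, so every irreducible of $\mathbb{G}$ is a subcorepresentation of some $\rho_w$ and hence, by the fusion computation, equal to one of the words. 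The main obstacle I anticipate is precisely the bookkeeping in the recursive Haar-state computation in the $j=i$ ``meeting'' case, where one must verify that the only surviving contributions are the fully collapsing term and the explicitly listed $z\otimes z'$ summands, and that no further cancellations or extra terms appear.
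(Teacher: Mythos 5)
The paper gives no proof of this theorem: it is recalled verbatim from Wang's article \cite{Wang2} and used as a black box (notably in the proof of Proposition \ref{FRFP}), so there is no internal argument to compare yours against. Your route is essentially Wang's original one, and it is sound: once the Haar state of $\mathbb{G}_1*\mathbb{G}_2$ is identified with the free product state $h_1*h_2$, the characters $\chi_r$ of nontrivial irreducibles are centered, and the induction on $h(\chi_w^*\chi_{w'})$ closes exactly as you describe, because after the contraction $\chi_{\overline{r_1}}\chi_{r'_1}=\sum_t m_t\chi_t+\delta_{r_1,r'_1}1$ every nontrivial $t$ leaves a nonempty alternating product of centered elements, which freeness annihilates; hence $h(\chi_w^*\chi_{w'})=\delta_{w,w'}$, and the fusion multiplicities are read off from $h(\chi_u^*\chi_x\chi_y)$ by the same computation. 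The one adjustment I would make is to drop your hedge about the Kac case: the identity $\dim\text{Hom}(a,b)=h(\chi_a^*\chi_b)$ holds for arbitrary compact quantum groups (it follows from $h(\chi_u)=\dim\text{Hom}(1,u)$, Frobenius reciprocity and $\chi_{\overline{a}}=\chi_a^*$), so your argument covers the full generality in which the theorem is stated.
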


We will use this result to describe the fusion rules of another type of product of compact quantum groups: the free wreath products $\widehat{\Gamma}\wr_* S_N^+$, with $\Gamma$ a discrete group (see section \ref{partfusrules}). Note that the previous theorem will be crucial in the proof of Theorem \ref{FRFP} where we describe certain intertwiner spaces in the free product of compact matrix quantum groups.

 Let us now recall the definition of the free wreath products by the quantum permutation groups $S_N^+$. We denote by $\nu_i$ the canonical homomorphism $\nu_i: A\to A^{*N}$ sending $A$ to the $i$-th copy of $A$ in $A^{*N}$. The following definition is due to Bichon: 

\begin{defi}(\cite[Definition 2.2]{Bic04})
Let $A$ be a Woronowicz-$C^*$-algebra and $N\ge2$. The free wreath product of $A$ by $S_N^+$ is the quotient of the $C^*$-algebra $A^{*N}*C(S_N^+)$ by the two-sided ideal generated by the elements $$\nu_k(a)v_{ki}-v_{ki}\nu_k(a), \ \ \ 1\le i,k\le N, \ \ a\in A.$$ It is denoted by $A*_wC(S_N^+)$.
\end{defi}

In the next result, we use the Sweedler notation $\Delta_A(a)=\sum a_{(1)}\otimes a_{(2)}\in A\otimes A$.

\begin{thm}(\cite[Theorem 2.3]{Bic04})
Let $A$ be a Woronowicz-$C^*$-algebra, then free wreath product $A*_wC(S_N^+)$ admits a Woronowicz-$C^*$-algebra structure: If $a\in A$, then
$$\Delta(v_{ij})=\sum_{k=1}^Nv_{ik}\otimes v_{kj}, \forall i,j\in\{1,\dots,N\},$$ $$\Delta(\nu_i(a))=\sum_{k=1}^N\nu_i(a_{(1)})v_{ik}\otimes\nu_k(a_{(2)}) \emph{ if } \Delta_A(a)=\sum a_{(1)}\otimes a_{(2)}\in A\otimes A,$$
$$\epsilon(v_{ij})=\delta_{ij},\ \epsilon(\nu_i(a))=\epsilon_A(a),\ S(v_{ij})=v_{ji},\ S(\nu_i(a))=\sum_{k=1}^N\nu_k(S_A(a))v_{ki},$$
$$v_{ij}^*=v_{ij},\ \nu_i(a)^*=\nu_i(a^*).$$ In particular, if $\mathbb{G}$ is a compact quantum group, then $\mathbb{G}\wr_*S_N^+=(A*_wC(S_N^+),\Delta)$ is also a compact quantum group.
\end{thm}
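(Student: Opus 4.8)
The plan is to construct the structural maps $\Delta$, $\epsilon$, $S$ directly from the universal property of the free product $A^{*N}*C(S_N^+)$ and then to check that each descends to the quotient by the commutation ideal. Write $B=A*_wC(S_N^+)$ for the target and build $\Delta$ as a $*$-homomorphism $A^{*N}*C(S_N^+)\to B\otimes_{\min}B$ by prescribing it on each free factor. On $C(S_N^+)$ the assignment $v_{ij}\mapsto\sum_k v_{ik}\otimes v_{kj}$ is the usual magic-unitary corepresentation and so extends by the universal property of $C(S_N^+)$; the point to check on the copies $\nu_i(A)$ is that $a\mapsto\sum_k\nu_i(a_{(1)})v_{ik}\otimes\nu_k(a_{(2)})$ is a $*$-homomorphism from $A$. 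Multiplicativity is where the magic-unitary relations enter: using that $v_{ik}$ commutes with $\nu_i(\cdot)$ in $B$ and that $v_{ik}v_{il}=\delta_{kl}v_{ik}$, the product of two such images collapses to the single sum $\sum_k\nu_i((ab)_{(1)})v_{ik}\otimes\nu_k((ab)_{(2)})$ once one invokes multiplicativity of $\Delta_A$; $*$-preservation is analogous.

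Next I would verify that $\Delta$ kills the defining ideal, i.e. that $\Delta(\nu_k(a))$ and $\Delta(v_{ki})$ commute in $B\otimes_{\min}B$. This is the heart of the construction and again rests on $v_{kl}v_{km}=\delta_{lm}v_{kl}$ together with the commutation relation in $B$: a short computation shows that both $\Delta(\nu_k(a))\Delta(v_{ki})$ and $\Delta(v_{ki})\Delta(\nu_k(a))$ equal $\sum_l\nu_k(a_{(1)})v_{kl}\otimes\nu_l(a_{(2)})v_{li}$. Hence $\Delta$ descends to $B$. Coassociativity is then checked on the generators, where for the $v_{ij}$ it is the corepresentation identity $\sum_k v_{ik}\otimes v_{kj}$ and for $\nu_i(a)$ it reduces to coassociativity of $\Delta_A$ combined with the corepresentation property of $v$.

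For the counit and antipode I would work on the dense $*$-subalgebra generated by the $v_{ij}$ and the $\nu_i(\Pol(A))$, and define $\epsilon$ and $S=\kappa$ by the stated formulas. Well-definedness of $\epsilon$ is immediate from $\epsilon(v_{ki})=\delta_{ki}$; for the antipode, which must be an antihomomorphism, one checks compatibility with the ideal using the magic-unitary relations. The axioms $m(\epsilon\otimes\id)\Delta=\id=m(\id\otimes\epsilon)\Delta$ and $m(S\otimes\id)\Delta=\epsilon(\cdot)1=m(\id\otimes S)\Delta$ are then verified on generators: for $v_{ij}$ they follow from the row and column relations $\sum_k v_{ik}=\sum_k v_{ki}=1$ and $v_{ki}v_{kj}=\delta_{ij}v_{ki}$, while for $\nu_i(a)$ they reduce to the corresponding Hopf-algebra axioms of $A$ after clearing the $v$'s. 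This endows the dense $*$-subalgebra with a Hopf $*$-algebra structure, so that by the standard equivalence between compact quantum groups and their polynomial Hopf $*$-algebras (equivalently, by checking Woronowicz's density conditions) $B$ is a compact quantum group.

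I expect the main obstacle to be the descent of the twisted coproduct to the quotient: unlike the untwisted free-product coproduct, $\Delta(\nu_i(a))$ mixes the factors through the $v_{ik}$, so one must check that the three families of relations --- the magic-unitary relations, the Hopf structure of $A$, and the cross commutation relations $\nu_k(a)v_{ki}=v_{ki}\nu_k(a)$ --- interact coherently. Once the collapsing identity $v_{ik}v_{il}=\delta_{kl}v_{ik}$ and the commutation of $v_{ik}$ with $\nu_i(\cdot)$ are in hand, the remaining verifications (coassociativity, counit, antipode) are routine propagations of the same mechanism together with the Hopf axioms already available in $A$ and in $C(S_N^+)$.
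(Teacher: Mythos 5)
This theorem is not proved in the paper at all: it is recalled verbatim from Bichon's work (\cite[Theorem 2.3]{Bic04}), so there is no in-paper argument to compare against. Your proposal is correct and follows essentially the original construction of Bichon — define $\Delta$, $\epsilon$, $S$ on the free product by universal properties, use the relations $v_{ik}v_{il}=\delta_{kl}v_{ik}$ and $\nu_k(a)v_{ki}=v_{ki}\nu_k(a)$ to check multiplicativity of the twisted coproduct and its descent to the quotient, and verify the Hopf axioms on generators — with the only point worth making explicit being that the Sweedler-notation formula for $\Delta(\nu_i(a))$ is a priori defined on the dense Hopf $*$-subalgebra (or, at the $C^*$-level, rewritten as $\sum_k\bigl[(\nu_i\otimes\nu_k)\circ\Delta_A\bigr](a)(v_{ik}\otimes 1)$ using that the $v_{ik}\otimes 1$ are orthogonal projections summing to $1$ and commuting with the relevant copies of $A$), which you implicitly address by passing to the polynomial algebra at the end.
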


The following examples are fundamental for the rest of this article.
\begin{exe}\label{exefond}(\cite[Example 2.5]{Bic04})
Let $\Gamma$ be a discrete group with neutral element $e$ and let $N\ge2$. Let $A_N(\Gamma)$ be the universal $C^*$-algebra with generators $a_{ij}(g), 1\le i,j\le N, g\in\Gamma$ together with the following relations:
\begin{equation}\label{tt1}
a_{ij}(g)a_{ik}(h)=\delta_{jk}a_{ij}(gh)\ \ \ ;\ \ \ a_{ji}(g)a_{ki}(h)=\delta_{jk}a_{ji}(gh)
\end{equation}
\begin{equation}\label{tt2}
\sum_{l=1}^Na_{il}(e)=1=\sum_{l=1}^Na_{li}(e),
\end{equation}
and involution 
\begin{equation}\label{tt3}
a_{ij}(g)^*=a_{ij}(g^{-1}).
\end{equation}
Then $H_N^+(\Gamma):=(A_N(\Gamma),\Delta)$ is a compact quantum group with:
\begin{equation}\label{tt3}
\Delta(a_{ij}(g))=\sum_{k=1}^N a_{ik}(g)\otimes a_{kj}(g).
\end{equation}

We have for all $g\in\Gamma$, $\epsilon(a_{ij}(g))=\delta_{ij}$ and $S(a_{ij}(g))=a_{ji}(g^{-1})$.
Furthermore, $H_N^+(\Gamma)$ is isomorphic, as a compact quantum group, with $\widehat{\Gamma}\wr_* S_N^+$. Consider the following important special cases:
\begin{enumerate}
\item If $\Gamma=\Z_s$ for an integer $s\ge 1$, one gets the quantum reflection groups $H_N^{s+}$, see \cite{BBCC11} and \cite{BV09}. $C(H_N^{s+})$ is the universal $C^*$-algebra generated by $N^2$ normal elements $U_{ij}$ such that:
\begin{enumerate}
\item $U=(U_{ij})$ and $^tU=(U_{ji})$ are unitary,
\item $U_{ij}U_{ij}^*$ is a projection, $\forall 1\le i,j\le N$,
\item\label{s fini} $U_{ij}^s=U_{ij}U_{ij}^*$, $\forall 1\le i,j\le N$,
\item $\Delta(U_{ij})=\sum_{k=1}^NU_{ik}\otimes U_{kj}$, $\forall 1\le i,j\le N$.
\end{enumerate}
\item If $\Gamma=\Z$, one gets $H_{N}^{\infty+}=(C(H_N^{\infty+}),\Delta)$  where $C(H_N^{\infty+})$ and $\Delta$ are defined as above except that one removes the relations (\ref{s fini}) above.
\end{enumerate}
\end{exe}

\begin{rque}
Notice that if $\Gamma$ has cardinal $|\Gamma|=1$, then $H_N^+(\Gamma)=S_N^+$. 
\end{rque}

One can consider other examples of free wreath product quantum groups $\widehat{\Gamma}\wr_*S_N^+$, for instance:
\begin{exe}
Let $N\ge2$ and let $\mathbb{F}_2=\langle a,b\rangle$ be the (classical) non-abelian free group on two generators. Then the underlying Woronowicz-$C^*$-algebra of the free wreath product quantum group $\widehat{\mathbb{F}_2}\wr_*S_N^+$ is generated by $2N^2$ generators $$U_{ij}^{(a)}=\nu_i(u_a)v_{ij}, U_{ij}^{(b)}=\nu_i(u_b)v_{ij}\ \forall1\le i,j\le N$$ such that:
\begin{enumerate}
\item[(a)] $U^{(a)}=(U_{ij}^{(a)})$, $^tU^{(a)}=\left(U_{ji}^{(a)}\right)$, $U^{(b)}=\left(U_{ij}^{(b)}\right)$ and $^tU^{(b)}=(U_{ji}^{(b)})$ are unitary,
\item[(b)] $U_{ij}^{(a)}\left(U_{ij}^{(a)}\right)^*$, $U_{ij}^{(b)}\left(U_{ij}^{(b)}\right)^*$ are projections, $\forall 1\le i,j\le N$,
\item[(c)] $\Delta\left(U_{ij}^{(a)}\right)=\sum_{k=1}^NU_{ik}^{(a)}\otimes U_{kj}^{(a)}$, $\Delta\left(U_{ij}^{(b)}\right)=\sum_{k=1}^NU_{ik}^{(b)}\otimes U_{kj}^{(b)}$, $\forall 1\le i,j\le N$.
\end{enumerate}
\end{exe}


We will use the following proposition to find the fusion rules of $H_N^+(\Gamma)$.
\begin{prp}\label{arrows}
Let $\Gamma$ be a finitely generated group $\Gamma=\langle g_1,\dots,g_p\rangle$ and denote by $s_r\in[1,+\infty]$ the order of the element $g_r$. We have canonical homomorphisms $$*_{r=1}^pC(H_N^{s_r+})\overset{\pi_1}{\longrightarrow} C(H_N^+(\Gamma))\overset{\pi_2} {\longrightarrow} C(S_N^+)$$ given, for all $k\in\mathbb{N}$, $1\le r_t\le p$, $1\le i_t,j_t\le N$, by $$\pi_1\left(U_{i_1j_1}^{(r_1)}\dots U_{i_kj_k}^{(r_k)}\right)=a_{i_1j_1}(g_{r_1})\dots a_{i_kj_k}(g_{r_k}),$$ and for all $1\le i,j\le N, g\in\Gamma$ by $$\pi_2(a_{ij}(g))=v_{ij},$$ where $U_{ij}^{(r)}$ is the coefficient of the fundamental corepresentation of $C(H_N^{s_r+})$ seen as the $r$-th factor of the free product $*_{r=1}^pC(H_N^{s_r+})$. 
\end{prp}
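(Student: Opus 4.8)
The plan is to build both arrows from the universal properties of the algebras involved. Since $*_{r=1}^p C(H_N^{s_r+})$ is the (unital, full) free product of the Woronowicz-$C^*$-algebras $C(H_N^{s_r+})$, a unital $*$-homomorphism out of it is the same thing as a family of unital $*$-homomorphisms $\phi_r\colon C(H_N^{s_r+})\to C(H_N^+(\Gamma))$, one per factor; the asserted formula for $\pi_1$ on the spanning products $U_{i_1j_1}^{(r_1)}\cdots U_{i_kj_k}^{(r_k)}$ is then automatic from multiplicativity. Likewise, because $C(H_N^+(\Gamma))=A_N(\Gamma)$ and $C(S_N^+)$ are defined by generators and relations, each $\phi_r$ and the map $\pi_2$ may be constructed simply by exhibiting elements in the target that satisfy the defining relations of the source.

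For $\pi_1$ I would first record some consequences of \eqref{tt1}--\eqref{tt2} and the involution $a_{ij}(g)^*=a_{ij}(g^{-1})$ in $A_N(\Gamma)$. Setting $g=h=e$ in \eqref{tt1} shows that the elements $p_{ij}:=a_{ij}(e)$ are self-adjoint idempotents with $p_{ij}p_{ik}=\delta_{jk}p_{ij}$ and $p_{ji}p_{ki}=\delta_{jk}p_{ji}$, which together with \eqref{tt2} means $(p_{ij})$ is a magic unitary. Next, taking $h=g^{-1}$ (resp. the first argument $=g^{-1}$) in the two families of \eqref{tt1} gives $a_{ij}(g)a_{ij}(g)^*=a_{ij}(g)^*a_{ij}(g)=p_{ij}$, so each $a_{ij}(g)$ is a partial isometry with range and source projection $p_{ij}$; this is condition (b). To get the unitarity (a) of $U=(a_{ij}(g_r))$ and of its transpose, I would compute the matrix products entrywise: the first family of \eqref{tt1} gives $\sum_k a_{ki}(g)^*a_{kj}(g)=\delta_{ij}\sum_k p_{ki}=\delta_{ij}1$, while the \emph{second} family gives $\sum_k a_{ik}(g)a_{jk}(g)^*=\delta_{ij}\sum_k p_{ik}=\delta_{ij}1$, with the symmetric computations handling $^tU$. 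Finally, iterating $a_{ij}(g)a_{ij}(h)=a_{ij}(gh)$ from \eqref{tt1} yields $a_{ij}(g_r)^{m}=a_{ij}(g_r^{m})$, so when $s_r<\infty$ we obtain $a_{ij}(g_r)^{s_r}=a_{ij}(g_r^{s_r})=p_{ij}=a_{ij}(g_r)a_{ij}(g_r)^*$, which is condition (c) (and there is nothing to check when $s_r=\infty$). Thus the $a_{ij}(g_r)$ satisfy every defining relation of $H_N^{s_r+}$ from Example \ref{exefond}, and $\phi_r$ exists.

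The map $\pi_2$ is lighter: I would check that sending $a_{ij}(g)\mapsto v_{ij}$ for \emph{every} $g\in\Gamma$ respects \eqref{tt1}--\eqref{tt2} and the involution. Since $(v_{ij})$ is a magic unitary (Definition \ref{permw}), the projections $v_{ij}$ satisfy $v_{ij}v_{ik}=\delta_{jk}v_{ij}$, $v_{ji}v_{ki}=\delta_{jk}v_{ji}$, $\sum_l v_{il}=1=\sum_l v_{li}$ and $v_{ij}^*=v_{ij}$, which are exactly \eqref{tt1}, \eqref{tt2} and the involution once all group elements are collapsed to the same projection; hence $\pi_2$ is well defined by the universal property of $A_N(\Gamma)$. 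A direct comparison of the coproduct formulas then shows that both $\pi_1$ and $\pi_2$ intertwine the coproducts, so they are in fact morphisms of compact quantum groups.

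The computations are all routine manipulations of \eqref{tt1}--\eqref{tt2}; the only point requiring genuine care is the verification of (a), where the off-diagonal orthogonality of the rows (equivalently the unitarity of $^tU$) really needs the \emph{second} family of relations in \eqref{tt1}, not just the first---using only one family would leave the products $a_{ik}(g)a_{jk}(g^{-1})$ with $i\neq j$ unresolved. One should also read the free product as the universal one, so that the factorwise maps $\phi_r$ genuinely assemble into $\pi_1$.
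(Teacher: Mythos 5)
Your argument is correct and follows essentially the same route as the paper: both maps are obtained from the universal properties of the free product, of $C(H_N^{s_r+})$ and of $A_N(\Gamma)$, the paper simply omitting the routine verification that the elements $a_{ij}(g_r)$ satisfy the defining relations of $C(H_N^{s_r+})$, which you spell out correctly. Nothing is missing.
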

\begin{proof}
The existence of the homomorphism $\pi_2$ is clear by universality of $C(H_N^+(\Gamma))$. For $\pi_1$, notice that for all $1\le r\le p$ there is, by universality of $C(H_{N}^{s_r+})$, a homomorphism $\pi_1^{(r)}$ such that $\pi_1^{(r)}(U_{ij}^{(r)})=a_{ij}(g_r)$ for all $1\le i,j\le N$. Then $$\pi_1:=*_{r=1}^p\pi_1^{(r)}: *_{r=1}^pC(H_N^{s_r+})\to C(H_N^+(\Gamma))$$ satisfies for all $1\le r_t\le p$, $1\le i_t,j_t\le N$, $$\pi_1\left(U_{ij}^{(r_1)}\dots U_{ij}^{(r_k)}\right)=\pi_{1}^{(r_1)}\left(U_{ij}^{(r_1)}\right)\dots\pi_{1}^{(r_k)}\left(U_{ij}^{(r_k)}\right)=a_{i_1j_1}(g_{r_1})\dots a_{i_kj_k}(g_{r_k}).$$
\end{proof}

In the next theorem, we will recall the description of the corepresentations and fusion rules proved by Banica and Vergnioux in \cite{BV09} for the compact quantum reflection groups $H_{N}^{s+}, H_N^{\infty+}$ ($N\ge4$). In the case $s=\infty$ we make the convention that $\Z_s=\Z$. 

\begin{defi}
Let $F=\langle\Gamma\rangle$ (\/$\Gamma=\Z_s$, $s\in[1,+\infty]$) be the monoid formed by the words over the group $\Gamma$, endowed with the operations 
\begin{enumerate}
\item Involution: $(i_1\dots i_k)^-=(-i_k)\dots(-i_1)$,
\item Fusion: $(i_1\dots i_k).(j_1\dots j_l)=i_1\dots i_{k-1}(i_k+j_1)j_2\dots j_l$.
\end{enumerate}
\end{defi}

\begin{thm}\label{basicsAHS} Let $N\ge4$, $s\in[1,\infty]$.
$C(H_N^{s+})$ has a unique family of $N$-dimensional corepresentations (called basic corepresentations) $\{U_k: k\in\Z\}$ satisfying the following conditions:
\begin{enumerate}
\item $U_k=(U_{ij}^k)$ for any $k>0$.
\item $U_k=U_{k+s}$ for any $k\in\Z$.
\item $\overline{U}_k=U_{-k}$ for any $k\in\Z$.
\item $U_k, k\ne0$ are irreducible.
\item $U_0=1\oplus\rho_0$, $\rho_0$ irreducible.
\item $\rho_0,U_1,\dots,U_{s-1}$ are inequivalent corepresentations.
\end{enumerate} 
Furthermore if we write for all $i\in\Z_s$, $\rho_i=U_i\ominus \delta_{i0}1$, then the irreducible corepresentations of $C(H_N^{s+})$ can be labelled $\rho_x, x\in \langle\Z_s\rangle$ and the involution and fusion rules are $\overline{\rho}_x=\rho_{\overline{x}}$ and 
$$\rho_x\otimes\rho_y=\sum_{x=vz,\ y=\overline{z}w}\rho_{vw}\ \oplus \displaystyle\sum_{\substack{x=vz,\ y=\overline{z}w\\ v\ne\emptyset,w\ne\emptyset}} \rho_{v.w}.$$
\end{thm}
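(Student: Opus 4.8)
The plan is to reconstruct the whole tensor category of corepresentations of $C(H_N^{s+})$ by Woronowicz's Tannaka--Krein duality and then to read off the irreducibles and fusion rules from the dimensions of the morphism spaces; throughout I work in the Kac case, so conjugates are again unitary and $\dim\text{Hom}(u,v)=\dim\text{Hom}(v,u)$. First I would settle the structural assertions (1)--(6). Writing the generators as $U_{ij}=a_{ij}(g)$ for the generator $g$ of $\Z_s$ (Example~\ref{exefond}), the relations $a_{ij}(g)a_{ik}(h)=\delta_{jk}a_{ij}(gh)$ give $U_{il}U_{il'}=0$ for $l\ne l'$, so on expanding $\Delta(U_{ij})^{k}$ all cross terms vanish and $\Delta(U_{ij}^{k})=\sum_{l}U_{il}^{k}\otimes U_{lj}^{k}$; hence $U_k=(U_{ij}^k)$ is a corepresentation for $k>0$. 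Periodicity $U_k=U_{k+s}$ follows from $a_{ij}(g^{k+s})=a_{ij}(g^{k})$, the conjugation rule $\overline U_k=U_{-k}$ from $U_{ij}^{*}=a_{ij}(g^{-1})$, and the identity $U_0=(a_{ij}(e))$ exhibits $U_0$ as the image of the fundamental magic unitary under the surjection $\pi_2\colon C(H_N^{s+})\to C(S_N^+)$. Whence $U_0=1\oplus\rho_0$, where $\rho_0$ restricts under $\pi_2$ to the irreducible $v^{(1)}$ of $S_N^+$ and is therefore itself irreducible, since $\text{Hom}_{H_N^{s+}}(\rho_0,\rho_0)\subseteq\text{Hom}_{S_N^+}(v^{(1)},v^{(1)})=\mathbb{C}\,\id$.

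The core of the argument is to compute, for all words $k_1\cdots k_m$ and $l_1\cdots l_n$ over $\Z_s$, the space
$$\text{Hom}\big(U_{k_1}\otimes\cdots\otimes U_{k_m},\,U_{l_1}\otimes\cdots\otimes U_{l_n}\big).$$
I would first identify the elementary intertwiners coming directly from the defining relations: the duality maps $1\to U_k\otimes U_{-k}$ and $U_k\otimes U_{-k}\to 1$, and the ``fusion'' map $U_k\otimes U_l\to U_{k+l}$ induced by $a_{ij}(g^k)a_{ij}(g^l)=a_{ij}(g^{k+l})$. By Woronowicz's theorem the rigid tensor category they generate is the full corepresentation category, so every morphism space is spanned by the ``partition vectors'' $T_\pi$ obtained by composing and tensoring these maps; these are indexed by the $\Z_s$-coloured noncrossing partitions $\pi$ of the $m+n$ marked points carrying $k_1,\dots,k_m,l_1,\dots,l_n$ whose blocks have label-sum $0$ in $\Z_s$ (with the appropriate signs distinguishing source and target). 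The decisive second step, which I expect to be the main obstacle, is to prove that for $N\ge4$ the $T_\pi$ are linearly independent, so that the dimension of each morphism space is \emph{exactly} the number of admissible noncrossing partitions. This is a Gram-matrix computation: one has $\langle T_\pi,T_\sigma\rangle=N^{\#(\pi\vee\sigma)}$, and the determinant of this matrix, evaluated through M\"obius inversion on the noncrossing-partition lattice, is a polynomial in $N$ that is nonzero precisely when $N\ge4$ (it degenerates for $N\le3$, where $H_N^{s+}$ ceases to be ``free''). This nondegeneracy is what pins the intertwiner dimensions down to the free combinatorial count.

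With exact dimensions in hand the rest is bookkeeping. I would define $\rho_x$ for a word $x=(i_1\cdots i_k)$ by induction on length as the new irreducible summand occurring in $U_{i_1}\otimes\cdots\otimes U_{i_k}$ but in no shorter tensor product, with $\rho_\emptyset=1$ and $\rho_i=U_i\ominus\delta_{i0}1$; irreducibility and pairwise inequivalence of the $\rho_x$ (hence (4),(5),(6)) together with $\overline{\rho_x}=\rho_{\overline x}$ then follow by the standard extraction of a maximal orthogonal family from the now-known intertwiner dimensions. Finally I would verify the fusion rule by computing, for words $x,y,z$,
$$\dim\text{Hom}(\rho_z,\rho_x\otimes\rho_y)=\dim\text{Hom}\big(1,\overline{\rho_x}\otimes\rho_z\otimes\overline{\rho_y}\big)$$
(Frobenius reciprocity in the rigid category) as a count of admissible noncrossing partitions. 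Each such partition forces a maximal cancellation of a suffix $z'$ of $x$ against the prefix $\overline{z'}$ of $y$, i.e.\ a decomposition $x=vz'$, $y=\overline{z'}w$; the two sums in the asserted formula are then exactly the two ways the innermost surviving letters of $v$ and $w$ can interact, namely remaining separate (the concatenation $vw$) or being merged across the cancellation point (the fusion $v.w$, which forces $v,w\ne\emptyset$). As a consistency check, for $s=1$ this collapses to words over a one-point alphabet and reproduces the $SO(3)$-rule $v^{(p)}\otimes v^{(q)}\simeq\bigoplus_{k=0}^{2\min(p,q)}v^{(p+q-k)}$ of the recalled theorem on $S_N^+$.
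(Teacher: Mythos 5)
Your proposal is correct and follows essentially the same route as the literature: a Tannaka--Krein reconstruction of the morphism spaces by $\Z_s$-decorated noncrossing partitions with vanishing block sums, linear independence of the maps $T_\pi$ for $N\ge4$ via the Gram/Tutte determinant, and a Frobenius-reciprocity count of admissible partitions to extract the irreducibles and the concatenation/fusion rule. Note that the paper itself only cites this statement from Banica--Vergnioux and instead proves the generalization to arbitrary $\Gamma$ (Theorems \ref{fusrulesbis}, \ref{fusrules} and \ref{nonalter}) by exactly the strategy you outline, with your ``bookkeeping'' step corresponding to the combinatorial Lemma \ref{bigmap} there.
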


We want to prove that one can generalize the description of the irreducible corepresentations and fusion rules to $H_N^+(\Gamma)=\widehat{\Gamma}\wr_*S_N^+$ for any discrete group $\Gamma$, $N\ge4$. We will then get several operator algebraic properties that one can deduce from the knowledge of these fusion rules.

We recall what is already known in the case $\Gamma=\Z_s$ ($s\ge1$ finite):
\begin{thm}(\cite{Lem13})
The dual of $H_N^{s+}$ has the Haagerup property for all $N\ge4, s\in[1,+\infty)$.
\end{thm}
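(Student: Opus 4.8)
The plan is to prove the Haagerup property of $\widehat{H_N^{s+}}$ by the standard route through \emph{central completely positive multipliers}. Since $H_N^{s+}$ is of Kac type, its dual has the Haagerup approximation property as soon as one produces a sequence of functions $a_n : \text{Irr}(H_N^{s+}) \to \mathbb{C}$ that are (i) positive-definite, in the sense that the associated central multiplier $m_{a_n}$, acting on the coefficients of each irreducible $\rho_x$ by the scalar $a_n(x)$, is unital, trace-preserving and completely positive on $\Pol(H_N^{s+})$; (ii) such that $a_n \in c_0(\text{Irr}(H_N^{s+}))$, so that the $L^2$-implementation of $m_{a_n}$ is compact; and (iii) such that $a_n \to 1$ pointwise, so that $m_{a_n} \to \id$ in the appropriate topology. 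First I would record this equivalence precisely (it is purely formal in the Kac case, the trace making the spectral blocks orthogonal), so that the whole problem is recast as the construction of such a sequence $(a_n)$ on the word set $\langle\Z_s\rangle$ supplied by Theorem \ref{basicsAHS}.

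Next I would build the candidate functions from the two building blocks visible in those fusion rules: the finite cyclic group $\Z_s$, whose letters label the basic corepresentations $\rho_i$, and the quantum permutation group $S_N^+$, whose $SO(3)$-type fusion rules govern the concatenation/length behaviour. Concretely, to a word $x = i_1\cdots i_k \in \langle\Z_s\rangle$ I would assign $a_n(x) = \phi_n(|x|)\,\prod_{t=1}^k \psi(i_t)$, where $|x|=k$ is the length, $\psi$ is a normalized positive-definite function on the finite group $\Z_s$ (which, $\Z_s$ being finite hence amenable, may be taken identically $1$), and $\phi_n$ is adapted from Brannan's family of $c_0$ positive-definite functions on $\text{Irr}(S_N^+)\cong\N$, realized via the dilated Chebyshev structure of the $S_N^+$ fusion ring. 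Because $\Z_s$ is finite, conditions (ii) and (iii) are then immediate: the decay forcing $a_n\in c_0$ comes entirely from the length variable (the monoid $\langle\Z_s\rangle$ is infinite even when $s<\infty$), and pointwise convergence to $1$ is inherited from $\phi_n\to 1$.

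The heart of the argument, and the step I expect to be the main obstacle, is verifying positive-definiteness (i) against the precise rule $\rho_x\otimes\rho_y=\sum_{x=vz,\,y=\overline z w}\rho_{vw}\oplus\sum_{x=vz,\,y=\overline z w,\,v\ne\emptyset,w\ne\emptyset}\rho_{v.w}$. The difficulty is that this is a genuinely noncommutative, free-product-type fusion law on words, with an extra \emph{fusion} operation $v.w$ that folds the $\Z_s$-group law in at the junction, so positivity cannot be read off from a single-variable orthogonal-polynomial model as for $O_N^+$ or $S_N^+$. My plan is to exploit the parallel between these rules and Wang's free-product rules (Theorem \ref{frprodcrucial}): I would show that positive-definiteness of a length-radial function is stable under the free-wreath fusion combinatorics, reducing the positivity of $a_n$ to the separate positivity of $\psi$ on $\Z_s$ and of $\phi_n$ along the $S_N^+$-chain. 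This amounts to checking that the relevant Haagerup-type Gram matrices factorize along the word structure and remain positive after the junction fusion; controlling the $\rho_{v.w}$ term, where the multiplication in $\Z_s$ interacts with the length grading, is the delicate point and is exactly where finiteness of $s$ enters essentially. Finally I would dispose of the boundary case $s=1$, where $\Z_1$ is trivial and $H_N^{1+}=S_N^+$ so the statement is Brannan's theorem, and assemble (i)--(iii) to conclude, via the first step, that $\widehat{H_N^{s+}}$ has the Haagerup property for all $N\ge 4$ and all finite $s$.
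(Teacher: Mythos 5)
Your overall framework (a net of central multipliers that are unital, trace-preserving, completely positive, $c_0$ and converging pointwise to the identity) is indeed the one used in \cite{Lem13} and reproduced in Section 3.3 of this paper for general finite $\Gamma$, but the proposal has a genuine gap exactly where you locate ``the heart of the argument'': complete positivity is never established, only announced (``I would show that positive-definiteness of a length-radial function is stable under the free-wreath fusion combinatorics''). This is not a routine verification, and moreover your candidate $a_n(x)=\phi_n(|x|)\prod_t\psi(i_t)$ (with $\psi\equiv1$, hence a function of the number of letters of $x$ alone) is not the function that actually works. The multiplier used in \cite{Lem13} assigns to $r_\alpha$, $\alpha=a^{l_1}z_{g_1}\dots a^{l_k}$ in the indexing of Theorem \ref{alter}, the value $\prod_iA_{l_i}(\sqrt{x})/\prod_iA_{l_i}(\sqrt{N})$; this depends on the whole multi-index $(l_1,\dots,l_k)$, i.e.\ on the positions of the trivial letters inside the word, and not only on its length: the words $(1,1)$ and $(0,0)$ of $\langle\Z_s\rangle$ both have two letters but receive the values $A_1(\sqrt x)^2A_2(\sqrt x)/(A_1(\sqrt N)^2A_2(\sqrt N))$ and $A_4(\sqrt x)/A_4(\sqrt N)$ respectively. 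There is no evident reason that a function radial in the naive word length defines a completely positive multiplier against the fusion rules of Theorem \ref{basicsAHS}, and no argument is offered; proving such a statement would itself be a nontrivial Haagerup-type inequality for this fusion ring.

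The missing idea that makes positivity come for free is the surjection $\pi:C(H_N^{s+})\to C(S_N^+)$ of Proposition \ref{arrows}. One sets $\phi_x=ev_x\circ\pi$, where $ev_x$ is evaluation at $x\in I_N$ on the central algebra $C(S_N^+)_0\simeq C([0,N])$; then $\phi_x$ is a genuine state on the central algebra of $H_N^{s+}$, and \cite[Theorem 3.7]{Bra12} produces the unital, completely positive, $h$-preserving map $T_{\phi_x}=\sum_\alpha\frac{\phi_x(\chi_{\overline{\alpha}})}{d_\alpha}P_\alpha$ with no fusion-rule computation whatsoever. The fusion rules are only needed afterwards, to compute $\pi(\chi_\alpha)=\prod_iA_{l_i}(\sqrt{X})$ and $\dim(r_\alpha)=\prod_iA_{l_i}(\sqrt{N})$, from which the $c_0$ decay along the proper length $L(\alpha)=\sum_il_i$ and the pointwise convergence as $x\to N$ follow by elementary Chebyshev estimates. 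Unless you can independently prove positive-definiteness of your length-radial functions, the argument as proposed does not close; I would advise replacing your construction of $a_n$ by the pullback of Brannan's states through $\pi$.
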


\section{Fusion rules for $H_N^+(\Gamma)$}\label{partfusrules}
In this section $\Gamma$ is any discrete group,
$N$ is an integer $N\ge4$. We are going to describe the irreducible corepresentations of $H_N^+(\Gamma)=\widehat{\Gamma}\wr_*S_N^+$ and the fusion rules binding them.  To fulfill this, we are going to use techniques introduced in \cite{Ban96}, \cite{MR1484551}, \cite{Ban99} and developed in \cite{BBCC11} and \cite{BV09}. 

Let $\mathbb{G}=(C(\mathbb{G}),\Delta)$ be a compact quantum group such that $C(\mathbb{G})$ is generated by the coefficients of a certain family $(v_i)_{i\in I}$ of finite dimensional corepresentations of $\mathbb{G}$. Denote by $\text{Rep}(\mathbb{G})$ the complete monoidal $C^*$-category with conjugates of all finite dimensional corepresentations of $\mathbb{G}$ (see e.g. \cite{Wor88} and \cite{Nesh} for the definitions of such rigid monoidal categories). We will keep the following notation in the sequel of this article:
\begin{nota}\label{TENS}
We denote by $\emph{Tens}(\mathbb{G},(v_i)_{i\in I})\subset \emph{Rep}(\mathbb{G})$ the full tensor subcategory with: 
\begin{enumerate}
\item[-] objects: all the tensor products between corepresentations $v_i$ and $\overline{v_i}$,
\item[-] morphisms: intertwiners between such tensor products.
\end{enumerate}

$\emph{Tens}(\mathbb{G},(v_i)_{i\in I})$ is contained in the category of (all) linear maps between tensor products of the representation spaces $H_i$ of the corepresentations $v_i$. Denote this category $\emph{Vect}(H_i)$.
\end{nota}
We will denote by $v$ the generating matrix of $S_N^+$ and denote by $a_{kl}(\gamma_i)$ the generating elements of $H_N^+(\Gamma)$.
Notice that, if $U^{(i)}$ denotes the fundamental corepresentation of $H_N^{s_i+}$ in $*_{i=1}^p(H_N^{s_i+})$, then the homomorphisms of Proposition \ref{arrows} at the level of the objects $$U^{(i)}\mapsto (a_{kl}(\gamma_i))_{k,l}\mapsto v,$$ give functors at the level of the categories
\begin{equation}\label{inclusion}
\text{Tens}\left(*_{i=1}^p(H_N^{s_i+}),\left\{U^{(i)}\right\}_{i=1}^p\right)\to \text{Tens}\left(H_N^+(\Gamma),a(\gamma_i): i=1,\dots,p\right)\to \text{Tens}(S_N^+,v).
\end{equation}

It is known (and recalled in the next subsection) that the tensor categories $\text{Tens}(H_N^{s+},U)$ and $\text{Tens}(S_N^+,v)$ have a diagrammatic description in terms of non-crossing partitions:  morphisms (i.e. intertwiners between corepresentations) can be described by (certain) non-crossing partitions. We use this fact to obtain a diagrammatic description of the tensor category $\text{Tens}(H_N^+(\Gamma), a(\gamma_i) : i=1,\dots,p)$. 
The inclusions (\ref{inclusion}) above, together with the diagrammatic description of $$\text{Tens}\left(*_{i=1}^p(H_N^{s_i+}),\left\{U^{(i)}\right\}_{i=1}^p\right)$$ which we will obtain in the subsection \ref{fpfusrules}, will allow us to draw our conclusions.

Before investigating these questions, we recall that the fusion rules of $H_N^+(\Gamma)$ are known in the case $N=2$, see \cite{Bic04}. In the sequel, we assume that $N\ge4$. The case $N=3$ is not investigated in this article since a crucial result in our argument (namely Theorem \ref{Tpindep} $(3)$) is not true in this case.

\subsection{Non-crossing partitions, diagrams. Tannaka-Krein duality}
In the following paragraph, we recall a few notions on non-crossing partitions, see e.g. \cite{BV09} for more informations.
\begin{defi}
A non-crossing partition of a set with repetitions $\{1,\dots,k,k+1,\dots,k+l\}$ with $k+l$ ordered elements $1<\dots<k<k+1<\dots<k+l$, is a picture of the following form:
\smallskip 
\[ \left\{ \begin{array}{ccccc}
1\ &  &  & \ & k\\
.\ & . & . & .\ & .\\
& &  \mathscr{P} & & \\
. & . & . & . &   \\
k+1 &  &  & k+l &   \\
\end{array} \right\}. \]
It contains $k$ upper points, $l$ lower points and consists of a diagram $\mathscr{P}$ composed of strings which connect certain upper and/or lower points and which do not cross one another. We denote by $NC(k,l)$ the set of all non-crossing partitions between $k$ upper points and $l$ lower points.
\end{defi}

Such non-crossing partitions give rise to new ones by tensor product, composition and involution:

\begin{defi}\label{NC} Let $p\in NC(l,m)$ and $q\in NC(k,l)$. Then, the tensor product, composition and involution of the partitions p,q are obtained by horizontal concatenation, vertical concatenation and upside-down turning:
$$p\otimes q=\left\{\mathscr{P}\mathscr{Q}\right\},\ 
pq=\setlength{\unitlength}{0.5cm}
\left\{\ \begin{picture}(1,1)\thicklines
\put(0,0.5){$\mathscr{Q}$}
\put(0,-0.6){$\mathscr{P}$}
\end{picture}\ \right\}-\{\emph{middle points, closed blocks}\},\ 
p^{*}=\{\mathscr{P^{\downarrow}}\}.$$
The composition $pq$ is only defined if the number of lower points of $q$ is equal to the number of upper points of $p$. When one identifies the lower points of $q$ with the upper points of $p$, closed blocks might appear, that is strings which are connected neither to the new upper points nor to the new lower points. These blocks are discarded from the final pictorial representation. We will denote by $b(p,q)$ the number of closed blocks appearing when performing a vertical concatenation.
\end{defi}

\begin{exe}
Following the rules stated above (erasing middle points, discarding closed blocks and following the lines when one identifies the upper points of $p$ with the lower points of $q$), we get 
\setlength{\unitlength}{0.5cm}
$$if\ \ \ 
p=\left\{\ \begin{picture}(3,3)\thicklines
\put(0,1.2){\line(0,1){0.75}}
\put(2,1.2){\line(0,1){0.75}}
\put(1,1.2){\line(0,1){0.75}}
\put(0,1.228){\line(1,0){1}}
\put(2,1.228){\line(1,0){1}}
\put(3,1.2){\line(0,1){0.75}}

\put(0,-0.878){\line(1,0){1}}
\put(0,-1.6){\line(0,1){0.75}}
\put(1,-1.6){\line(0,1){0.75}}
\put(2,-1.6){\line(0,1){0.75}}

\put(-0.2,2.2){1}
\put(0.8,2.2){2}
\put(1.8,2.2){3}
\put(2.8,2.2){4}
\put(-0.2,-2.2){1}
\put(0.8,-2.2){2}
\put(1.8,-2.2){3}
\end{picture}\ \right\} \ \ \ \ and \ \ \
q=\left\{\ \begin{picture}(4,3)\thicklines
\put(0,-1.3){\line(0,1){3.2}}
\put(2,1.2){\line(0,1){0.75}}
\put(4,1.2){\line(0,1){0.75}}
\put(1,-1.3){\line(0,1){3.2}}
\put(2,1.228){\line(1,0){1}}
\put(3,1.2){\line(0,1){0.75}}

\put(2,-1.6){\line(0,1){0.75}}
\put(3,-1.6){\line(0,1){0.75}}
\put(2,-0.878){\line(1,0){1}}

\put(-0.2,2.2){1}
\put(0.8,2.2){2}
\put(1.8,2.2){3}
\put(2.8,2.2){4}
\put(3.8,2.2){5}
\put(-0.2,-2.2){1}
\put(0.8,-2.2){2}
\put(1.8,-2.2){3}
\put(2.8,-2.2){4}
\end{picture}\ \right\}\ \ \ \ then\ \ \
pq=\left\{\ \begin{picture}(4,3)\thicklines
\put(0,1.2){\line(0,1){0.75}}
\put(2,1.2){\line(0,1){0.75}}
\put(1,1.2){\line(0,1){0.75}}
\put(0,1.228){\line(1,0){1}}
\put(2,1.228){\line(1,0){1}}
\put(3,1.2){\line(0,1){0.75}}
\put(4,1.2){\line(0,1){0.75}}

\put(0,-0.878){\line(1,0){1}}
\put(0,-1.6){\line(0,1){0.75}}
\put(1,-1.6){\line(0,1){0.75}}
\put(2,-1.6){\line(0,1){0.75}}

\put(-0.2,2.2){1}
\put(0.8,2.2){2}
\put(1.8,2.2){3}
\put(2.8,2.2){4}
\put(3.8,2.2){5}
\put(-0.2,-2.2){1}
\put(0.8,-2.2){2}
\put(1.8,-2.2){3}
\end{picture}\ \right\}.
$$
\end{exe}
\bigskip\noindent

From non-crossing partitions $p\in NC(k,l)$ naturally arise linear maps $T_p: \mathbb{C}^{N^{\otimes k}}\to \mathbb{C}^{N^{\otimes l}}$.
\begin{defi}
Let $(e_i)$ be the canonical basis of $\mathbb{C}^N$ and $p\in NC(k,l)$ be a non-crossing partition. Then $T_p\in B\left(\mathbb{C}^{N^{\otimes k}},\mathbb{C}^{N^{\otimes l}}\right)$ is defined by:
$$T_p(e_{i_1}\otimes\dots\otimes e_{i_k})=\sum_{j_1,\dots,j_l}\delta_p(i,j)e_{j_1}\otimes\dots\otimes e_{j_l}$$
where $i$ (respectively $j$) is the $k$-tuple $(i_1,\dots,i_k)$ (respectively the $l$-tuple $(j_1,\dots,j_l)$) and if one decorates the points of $p$ by $i$ and $j$, $\delta_p(i,j)$ is equal to:
\begin{enumerate}
\item $1$ if all strings of $p$ join equal indices,
\item $0$ otherwise.
\end{enumerate}
\end{defi}

\begin{exe}
We consider an element $p\in NC(4,3)$, choose any tuples $i=(i_1,i_2,i_3,i_4)$ and $j=(j_1,j_2,j_3)$, and put them on the diagram:
\setlength{\unitlength}{0.5cm}
$$
p=\left\{\ \begin{picture}(3.1,3.5)\thicklines
\put(0,0.5){\line(0,1){1.5}}
\put(2,1){\line(0,1){1}}
\put(1,0.5){\line(0,1){1.5}}
\put(0,0.5){\line(1,0){3}}
\put(3,0.5){\line(0,1){1.5}}
\put(0.4,-1.6){\line(0,1){1}}
\put(1.4,-1.6){\line(0,1){2.1}}
\put(2.4,-1.6){\line(0,1){1}}
\put(-0.1,2.2){$\cdot$}
\put(0.9,2.2){$\cdot$}
\put(1.9,2.2){$\cdot$}
\put(2.9,2.2){$\cdot$}
\put(0.4,-2.2){$\cdot$}
\put(1.4,-2.2){$\cdot$}
\put(2.4,-2.2){$\cdot$}
\put(-0.1,3){$i_1$}
\put(0.9,3){$i_2$}
\put(1.9,3){$i_3$}
\put(2.9,3){$i_4$}
\put(0.4,-3){$j_1$}
\put(1.4,-3){$j_2$}
\put(2.4,-3){$j_3$}
\end{picture}\ \right\}.\ \text{ Then\ }\ 
\delta_{p}(i,j)=\left\{
    \begin{array}{ll}
        1 & \mbox{{if} } i_1=i_2=i_4=j_2 \\
        0 & \mbox{{otherwise.}}
    \end{array}
\right.
$$
\end{exe}

\begin{exe}\label{idtensor}
We give basic examples of such linear maps:
\begin{enumerate}
\item[(i)] T\_\setlength{\unitlength}{0.5cm}
$
\left\{\ \begin{picture}(0.01,1)\thicklines
\put(0,-0.6){\line(0,1){1.5}}
\end{picture}\ \right\}
=\emph{id}_{\mathbb{C}^N}$

\medskip
\item[(ii)] $T\_{\{\bigcap\}}(1)
=\sum_ae_a\otimes e_a$
\end{enumerate}
\end{exe}

Tensor products, compositions and involutions of diagrams behave as follows with respect to the associated linear maps:

\begin{prp}(\cite[Proposition 1.9]{BS09}\label{tensorcat}
Let $p,q$ be non-crossing partitions. Then:
\begin{enumerate}
\item $T_{p\otimes q}=T_p\otimes T_q$,
\item $T_{pq}=n^{-b(p,q)}T_pT_q$,
\item $T_{p^*}=T_p^*$.
\end{enumerate}
\end{prp}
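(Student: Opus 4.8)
The statement to prove is Proposition~\ref{tensorcat}, which asserts the three compatibility relations between the combinatorial operations on non-crossing partitions and the corresponding operations on the associated linear maps $T_p$. Let me sketch a proof.
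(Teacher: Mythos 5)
Your proposal contains no proof: after restating what the proposition asserts, it ends with the announcement ``Let me sketch a proof'' and supplies nothing further. There is no computation, no verification of any of the three identities, and no identification of the key point on which each of them rests. As it stands this cannot be assessed as correct or incorrect because no mathematical content has been committed to.

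To turn this into an actual argument you would need, for each of the three items, to unwind the definition $T_p(e_{i_1}\otimes\dots\otimes e_{i_k})=\sum_{j}\delta_p(i,j)\,e_{j_1}\otimes\dots\otimes e_{j_l}$ and compare both sides on basis vectors. Item (1) reduces to the observation that $\delta_{p\otimes q}(i\,i',j\,j')=\delta_p(i,j)\delta_q(i',j')$ since the blocks of a horizontal concatenation are exactly the blocks of $p$ together with those of $q$. Item (3) is the symmetry $\delta_{p^*}(j,i)=\delta_p(i,j)$, which gives $\langle T_{p^*}e_j,e_i\rangle=\langle e_j,T_pe_i\rangle$. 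Item (2) is the only place where something nontrivial happens: when composing, one sums over the middle indices, and each closed block (connected to neither the new upper nor the new lower row) contributes a free summation over $\{1,\dots,n\}$ subject only to the constraint that its indices be equal, hence a factor of $n$; this is precisely the source of the normalisation $n^{-b(p,q)}$, and it is the step your sketch would have had to make explicit. Note also that the paper itself takes this proposition as a citation from Banica--Speicher rather than proving it, so if you intend to include a proof it should at least contain the index-counting argument for item (2).
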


We will keep the following notation in the sequel:

\begin{nota}\label{notanci}\
\begin{enumerate}
\item[$\bullet$] We will denote by $NC$ the collection of all sets $NC(k,l)$. It forms a monoidal category with involution and with $\N$ as a set of objects. 
\item[$\bullet$] We will denote by $NC^I$ the set of non-crossing partitions with upper and lower points decorated by elements of the set $I$. It is again a monoidal category whose objects are tuples of elements of $I$. The morphisms between a $k$-tuple and an $l$-tuple are the ones of $NC$.
\item[$\bullet$] We denote by $NC_s$ the subset of $NC^{\Z_s}$ defined as follows: 
\begin{enumerate}
\item[-] Objects are the same as the ones of $NC^{\Z_s}$ that is tuples of elements in $\Z_s$.
\item[-] Morphisms $p\in NC^{\Z_s}(\underline{i},\underline{j})$ are non-crossing partitions decorated by a $k$-tuple $\underline{i}=(i_1,\dots,i_k)\in\Z_s^k$ and an $l$-tuple $\underline{j}=(j_1,\dots,j_l)\in\Z_s^l$ having the property that, putting $\underline{i}$  on the upper row of $p$ and $\underline{j}$ on the lower row of $p$, then in each block, the sum over the elements of $\Z_s$ attached to the upper points is equal to the sum over the elements of $\Z_s$ attached to the lower points.
\end{enumerate}

\end{enumerate}
\end{nota}

For the term monoidal category, we refer to \cite[vii.1]{maclane}. The Proposition \ref{tensorcat} implies easily that the collection of spaces $\text{span}\{T_p : p\in NC(k,l)\}$ form a monoidal $C^*$-category in the sense of Doplicher and Roberts with $\N$ as a set of objects, see \cite{doplicher1989new} for the definitions. Furthermore, this tensor category has conjugates since the partitions of type

\setlength{\unitlength}{0.5cm}
$$ 
r=\left\{\ \begin{picture}(6,1.3)\thicklines

\put(0,0.5){\line(1,0){6}}
\put(2.7,1){$\emptyset$}
\put(0,-0.7){\line(0,1){1.22}}
\put(1,-0.7){\line(0,1){0.9}}
\put(5,-0.7){\line(0,1){0.9}}
\put(1,0.17){\line(1,0){4}}
\put(1.4,-0.7){...}
\put(3.6,-0.7){...}
\put(2.4,-0.7){\line(0,1){0.5}}
\put(3.4,-0.7){\line(0,1){0.5}}
\put(2.38,-0.21){\line(1,0){1.05}}
\put(6,-0.7){\line(0,1){1.22}}

\end{picture}\ \right\}\in NC(0;2k)$$
are non-crossing and since the following conjugate equations hold: 
\begin{equation}\label{eqconj}
(T_r^*\otimes \text{id})\circ(\text{id}\otimes T_r)=\text{id}=(\text{id}\otimes T_r^*)\circ(T_r\otimes \text{id}).
\end{equation}

Similar arguments show that the collection of spaces $\text{span}\{T_p : p\in NC_s(\underline{i},\underline{j})\}$ form a monoidal $C^*$-category with conjugates. 

In addition to Notation \ref{TENS} and \ref{notanci}, we will use the following one:
\begin{nota}\label{deflin}\
We will denote by $\emph{Lin}$ the ``projective" functor from $NC^I$ to $\emph{Vect}(H_i)$, where the $H_i$ are copies of $\mathbb{C}^N$ for some fixed N, defined as follows:
\begin{enumerate}
\item[$\bullet$] $\Lin({i_1},\dots,{i_k})=H_{i_1}\otimes\dots\otimes H_{i_k} : (i_1,\dots,i_k)\in I^k$,
\item[$\bullet$] $\Lin(p)=T_p\in B\left(\mathbb{C}^{N^{\otimes k}},\mathbb{C}^{N^{\otimes l}}\right) : p\in NC^I$.
\end{enumerate}
\end{nota}

\begin{rque}
The term ``projective" above is used because of the numerical factor appearing in the formula $T_{pq}=n^{-b(p,q)}T_pT_q$ above (Proposition \ref{tensorcat}). $\emph{Lin}$ is then a functor when one replaces the target vector spaces by the associated projective spaces where one quotients by the colinearity equivalence relation.
\end{rque}

Using Notation \ref{TENS}, \ref{notanci} and \ref{deflin}, we can now give a homogeneous result concerning the diagrammatic description of $\text{Tens}(S_N^+,v)$ and $\text{Tens}(H_N^{s+},U)$ (see \cite{Ban99}, \cite{BV09}) that we will generalize in the next subsections. All the categories considered below are then contained in $\text{Vect}(H_i)$ as defined above:

\begin{thm}\label{Tpindep}(\cite{Ban99}, \cite{BV09}, \cite{Tu92}) Let $N\ge2$, $s\in[1,\infty]$, $v$ be the fundamental corepresentation of $C(S_N^+)$ and $U$ be the fundamental corepresentation of $C(H_N^{s+})$ (see Section \ref{preliminaries}).
\begin{enumerate}
\item $\emph{Tens}(S_N^+,v )=\emph{span}\{ \Lin(NC)\}$ i.e. for all $k,l\in\N$ 
$$\emph{Hom}(v^{\otimes k},v^{\otimes l})={\emph{span}}\{T_p: p\in NC(k,l)\}.$$
\item $\emph{Tens}(H_N^{s+},\{U_i\})=\emph{span}\{\Lin(NC_s)\}$ i.e. for all $k,l\in\N$ and $i_t,j_q\in\Z_s$ (the case $s=\infty$ corresponds to $\Z$) $$\emph{Hom}(U_{i_1}\otimes\dots\otimes U_{i_k},U_{j_1}\otimes\dots\otimes U_{j_l})=\emph{span}\{T_p: p\in NC_s(\underline{i};\underline{j})\}.$$
\item\label{inddep} Moreover, the linear maps $T_p, p\in NC(k,l)$ are linearly independent for all $N\ge 4$.
\end{enumerate}
\end{thm}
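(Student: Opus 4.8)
The three assertions are of two different natures, so I would treat them separately. Parts (1) and (2) are Tannaka--Krein statements identifying concrete intertwiner spaces, while part (3) is a purely linear-algebraic independence statement; I would establish (1) and (2) first and then (3), since (3) does not logically depend on the quantum-group identification.

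For (1) and (2) the plan is to invoke Woronowicz's Tannaka--Krein duality. The work already recorded in the excerpt --- Proposition \ref{tensorcat} together with the conjugate equations \eqref{eqconj} --- shows that $\mathcal{C}:=\operatorname{span}\{\Lin(NC)\}$ (resp. $\operatorname{span}\{\Lin(NC_s)\}$) is a concrete rigid monoidal $C^*$-category sitting inside $\operatorname{Vect}(H_i)$. By Woronowicz's theorem there is a unique compact matrix quantum group $\mathbb{G}$, with fundamental corepresentation $w$ of dimension $N$, whose full intertwiner category is exactly $\mathcal{C}$ and whose $C^*$-algebra is universal for the relations ``each $T_p$, $p\in NC$, is an intertwiner of the $w^{\otimes k}$''. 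It then suffices to identify $\mathbb{G}$ with $S_N^+$. For the inclusion $\operatorname{Hom}\supseteq\operatorname{span}$ I would check that the generating partitions are intertwiners: since $NC$ is generated as a monoidal $*$-category by the identity, the duality partition $T_{\cap}\in NC(0,2)$, and the ``fork'' $p_0\in NC(1,2)$, one only needs to unwind $T_{p_0}\in\operatorname{Hom}(v,v^{\otimes 2})$ and $T_{\cap}\in\operatorname{Hom}(1,v^{\otimes 2})$; these translate exactly into $v_{ij}=v_{ij}^2$, $v_{ij}v_{ik}=\delta_{jk}v_{ij}$, and $\sum_k v_{ik}=1$, i.e. into the magic-unitary relations, and Proposition \ref{tensorcat} propagates the intertwiner property to all of $NC$. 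The reverse inclusion is then automatic, because the defining relations match and the universal property gives $C(S_N^+)\cong C(\mathbb{G})$, so that the intertwiner spaces of $S_N^+$ are literally those produced by the construction of $\mathbb{G}$. For (2) the argument is identical with $NC$ replaced by $NC_s$: the decorations in $\Z_s$ encode precisely the normality of the $U_{ij}$, the unitarity of $U$ and ${}^{t}U$, and the relation $U_{ij}^s=U_{ij}U_{ij}^*$, so the same comparison of generating relations identifies $\mathbb{G}$ with $H_N^{s+}$.

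For part (3) I would pass to the Gram matrix of the family $\{T_p:p\in NC(k,l)\}$. A direct computation from the definition of $T_p$ (or an appeal to Proposition \ref{tensorcat} after folding to the $(0,k+l)$ picture via duality) gives, for $p,q\in NC(k,l)$, the pairing $\langle T_p,T_q\rangle=\operatorname{Tr}(T_q^*T_p)=N^{\#(p\vee q)}$, where $p\vee q$ is the join of $p$ and $q$ in the lattice of non-crossing partitions and $\#$ counts blocks. Linear independence of the $T_p$ is thus equivalent to the non-singularity of the Gram matrix $G_N=(N^{\#(p\vee q)})_{p,q}$. To control its determinant I would diagonalize it by M\"obius inversion over the non-crossing partition lattice: setting $\xi_p=\sum_{q\le p}\mu(q,p)T_q$ yields a unitriangular change of basis with respect to the refinement order that turns $G_N$ into $M^{*}DM$, with $M$ unitriangular and $D$ diagonal with explicit entries that are products of linear factors in $N$. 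One then reads off that these factors vanish only for $N$ in a finite set lying strictly below $4$ (the Chebyshev-type values $4\cos^2(\pi/m)$ together with the small integers reflecting $S_2^+=\Z_2$ and $S_3^+=S_3$), whence $\det G_N\ne0$ for every $N\ge4$.

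The main obstacle is the determinant computation in part (3): identifying the precise diagonal factors after M\"obius inversion and proving that none of them vanishes for $N\ge 4$ is the delicate point, and it is exactly where the hypothesis $N\ge 4$ (rather than $N\ge 2$) enters --- this is also the reason, noted in the text, that the case $N=3$ must be excluded. By contrast, once Woronowicz's duality is invoked, parts (1) and (2) reduce to the bookkeeping of checking that the generating diagrams translate into the magic-unitary (resp. reflection-group) relations.
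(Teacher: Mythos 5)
Your plan coincides with the paper's treatment: the paper does not reprove this theorem but cites \cite{Ban99}, \cite{BV09} for (1)--(2), whose proofs proceed exactly by the Tannaka--Krein / generating-relations argument you describe, and \cite{Tu92} for (3), remarking only that Tutte's computation of the Gram determinant of the vectors $T_p$, $p\in NC(0,k)$, shows it does not vanish for $N\ge4$ --- which is precisely your M\"obius-inversion argument. The one slip worth fixing is that the Gram entry is $N^{\#(p\vee q)}$ with the join taken in the \emph{full} partition lattice rather than in $NC$ (the two joins can differ in block count), though this does not affect the overall strategy.
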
 

The proof of $(3)$ in the above theorem can be found in \cite{Tu92}. In this paper, Tutte computes the determinant of the Gram matrix associated with the vectors $T_p, p\in NC(0,k)$ and it is easy to see that it does not vanish for $N\ge4$.


We recall that in a monoidal $C^*$-category with conjugates, we have the following Frobenius reciprocity theorem (see \cite{Wor88} and \cite{Nesh}) that we will use in Proposition \ref{FRFP}.

\begin{thm}
Let $\mathscr{C}$ be a monoidal $C^*$-category with conjugates. If an object $U\in\mathscr{C}$ has a conjugate, with $R$ and $\overline{R}$ solving the conjugate equations (see \cite[Definition 2.2.1]{Nesh}, or (\ref{eqconj}) above), then the map
$$Mor(U\otimes V,W)\to Mor(V,\overline{U}\otimes W), T\mapsto (\emph{id}_{\overline{U}}\otimes T)(R\otimes \emph{id}_V)$$ is a linear isomorphism with inverse $S\mapsto (\overline{R}^*\otimes \emph{id}_W)(\emph{id}_U\otimes S)$.
\end{thm}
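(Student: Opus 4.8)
The plan is to verify directly that the proposed assignment $\Psi\colon S\mapsto (\overline{R}^*\otimes \id_W)(\id_U\otimes S)$ is a two-sided inverse of the map $\Phi\colon T\mapsto (\id_{\overline{U}}\otimes T)(R\otimes \id_V)$. First I would check that both maps are well defined and linear: tracking the source and target of each factor shows that $\Phi(T)\in \mathrm{Mor}(V,\overline{U}\otimes W)$ whenever $T\in \mathrm{Mor}(U\otimes V,W)$, and symmetrically that $\Psi(S)\in \mathrm{Mor}(U\otimes V,W)$ whenever $S\in \mathrm{Mor}(V,\overline{U}\otimes W)$; linearity is immediate since composing with, and tensoring by, a fixed morphism are linear operations in a $C^*$-category. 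It then suffices to prove the two identities $\Psi\circ\Phi=\id$ and $\Phi\circ\Psi=\id$.

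For the first identity I would expand, using the bifunctoriality (interchange law) $(f\otimes g)(f'\otimes g')=(ff')\otimes(gg')$ of the tensor product to distribute $\id_U\otimes(-)$ over the composition defining $\Phi(T)$, obtaining
$$\Psi(\Phi(T))=(\overline{R}^*\otimes \id_W)(\id_U\otimes \id_{\overline{U}}\otimes T)(\id_U\otimes R\otimes \id_V).$$
Applying the interchange law once more to slide $T$ to the outside collapses the inner factors, yielding
$$\Psi(\Phi(T))=(\id_{\mathbf{1}}\otimes T)\big([(\overline{R}^*\otimes \id_U)(\id_U\otimes R)]\otimes \id_V\big).$$
By the first conjugate equation $(\overline{R}^*\otimes \id_U)(\id_U\otimes R)=\id_U$ the bracketed term is $\id_{U\otimes V}$, so $\Psi(\Phi(T))=T$ after identifying $\mathbf{1}\otimes W$ with $W$.

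The second identity is handled symmetrically: expanding $\Phi(\Psi(S))$ and separating $R$ and $S$ via the interchange law reduces the central factor to $(\id_{\overline{U}}\otimes \overline{R}^*)(R\otimes \id_{\overline{U}})$. This equals $\id_{\overline{U}}$ because it is the adjoint of the left-hand side of the second conjugate equation $(R^*\otimes \id_{\overline{U}})(\id_{\overline{U}}\otimes \overline{R})=\id_{\overline{U}}$, and taking adjoints is compatible with both the tensor product and composition in a $C^*$-category (that is, $(f\otimes g)^*=f^*\otimes g^*$ and $(fg)^*=g^*f^*$). Hence $\Phi(\Psi(S))=S$, and $\Phi$ is a linear isomorphism with inverse $\Psi$.

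The computations themselves are routine string-diagram manipulations; the only genuine bookkeeping obstacle is keeping track of the associativity and unit coherence isomorphisms, which I suppress throughout. I would justify this at the outset by invoking MacLane's coherence theorem to replace $\mathscr{C}$ by an equivalent strict monoidal $C^*$-category, so that the interchange law and the unit identifications $\mathbf{1}\otimes X\cong X\cong X\otimes \mathbf{1}$ may be used without inserting explicit constraint morphisms; the two conjugate equations solved by $R$ and $\overline{R}$ then suffice to close both computations.
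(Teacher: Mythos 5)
Your proof is correct. Note that the paper does not actually prove this theorem: it is recalled as a known fact with references to Woronowicz and to Neshveyev--Tuset, so there is no in-paper argument to compare against. Your direct verification --- checking that the two maps are mutually inverse via the interchange law and the two conjugate equations (one of which enters through its adjoint, legitimately, since $*$ is compatible with $\otimes$ and with composition in a $C^*$-category) --- is exactly the standard proof found in those references, and the appeal to MacLane coherence to work in a strict monoidal category is a standard and acceptable way to suppress the associativity and unit constraints.
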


The next proposition is an application of Woronowicz's Tannaka-Krein duality. Tannaka-Krein duality was introduced by Woronowicz in \cite{Wor88}. It will be useful when computing the tensor category of certain compact quantum groups (see Theorem \ref{fusrules} below and compare with \cite[Theorem 12.1]{BBCC11}).

\begin{prp}\label{spanTK}
Let $\mathbb{G}_1=(C(\mathbb{G}_1),\{u_i\})$ and $\mathbb{G}_2=(C(\mathbb{G}_2),\{v_j\})$ be two compact quantum groups such that $C(\mathbb{G}_1), C(\mathbb{G}_2)$ are generated by the coefficients of some corepresentations $\{u_i\}, \{v_j\}$. 

Suppose that there is a surjective morphism $\pi: C(\mathbb{G}_1)\to C(\mathbb{G}_2)$ intertwining the coproducts (i.e. ${\mathbb{G}_2}\subset {\mathbb{G}_1}$). Suppose furthermore that $\ker(\pi)$ is generated by intertwining relations that is by a set $\mathscr{R}$ of linear maps $T$ which are morphisms in $\emph{Vect}(H_{u_i})$ giving equations in $C(\mathbb{G}_1)$. Then $\text{Tens}(\mathbb{G}_2,v_j)$ is generated as a rigid monoidal $C^*$-category by $\emph{Tens}(\mathbb{G}_1)$ and $\mathscr{R}$: $$\emph{Tens}(\mathbb{G}_2,\pi(u_i))=\langle \emph{Tens}(\mathbb{G}_1,u_i),\mathscr{R}\rangle.$$
\end{prp}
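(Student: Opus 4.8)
The plan is to invoke Woronowicz's Tannaka--Krein duality \cite{Wor88}, which sets up an inclusion-reversing correspondence between concrete rigid monoidal $C^*$-categories and compact quantum groups: a concrete category $\mathscr{C}\subset\text{Vect}(H_i)$ satisfying the axioms reconstructs a compact quantum group $\mathbb{G}_{\mathscr{C}}$ whose category $\text{Tens}(\mathbb{G}_{\mathscr{C}})$ generated by the fundamental corepresentations equals $\mathscr{C}$. I would first set $\mathscr{D}:=\langle \text{Tens}(\mathbb{G}_1,u_i),\mathscr{R}\rangle$, the rigid monoidal $C^*$-category generated inside $\text{Vect}(H_{u_i})$ by $\text{Tens}(\mathbb{G}_1,u_i)$ together with the extra morphisms in $\mathscr{R}$. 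By construction (closing up under tensor product, composition, adjoints and the conjugate solutions already present in $\text{Tens}(\mathbb{G}_1)$), $\mathscr{D}$ is again a concrete rigid monoidal $C^*$-category, so Tannaka--Krein yields a compact quantum group $\mathbb{H}=(C(\mathbb{H}),\{w_i\})$ with $\text{Tens}(\mathbb{H},w_i)=\mathscr{D}$. The whole statement then reduces to identifying $\mathbb{H}$ with $\mathbb{G}_2$.

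To make that identification I would use the universal description of the reconstructed algebras. Since $C(\mathbb{G}_1)$ is the full (universal) Woronowicz algebra, applying the duality to $\mathbb{G}_1$ itself recovers $C(\mathbb{G}_1)$ as the universal unital $C^*$-algebra generated by the coefficients of unitaries $u_i$ subject to the relations ``every $T\in\text{Tens}(\mathbb{G}_1,u_i)$ is an intertwiner''. Likewise $C(\mathbb{H})$ is the universal algebra attached to $\mathscr{D}$. Because $\mathscr{D}\supseteq\text{Tens}(\mathbb{G}_1,u_i)$, the defining relations of $C(\mathbb{H})$ contain those of $C(\mathbb{G}_1)$, so there is a surjective Hopf $*$-morphism $C(\mathbb{G}_1)\twoheadrightarrow C(\mathbb{H})$, $u_i\mapsto w_i$, exhibiting $C(\mathbb{H})$ as a quotient of $C(\mathbb{G}_1)$.

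It remains to compute the kernel of this quotient. The extra relations one must impose on $C(\mathbb{G}_1)$ to pass to $C(\mathbb{H})$ are precisely the intertwiner relations coming from the generators of $\mathscr{D}$ not already in $\text{Tens}(\mathbb{G}_1)$, i.e. from $\mathscr{R}$: once each $T\in\mathscr{R}$ is declared to intertwine the relevant tensor words of the $u_i$, the intertwiner property propagates automatically through tensor products, compositions, adjoints and conjugates, hence to all of $\mathscr{D}$. Thus the kernel of $C(\mathbb{G}_1)\to C(\mathbb{H})$ is exactly the closed Hopf $*$-ideal generated by the relations in $\mathscr{R}$. By hypothesis this ideal is $\ker(\pi)$, so $C(\mathbb{H})=C(\mathbb{G}_1)/\ker(\pi)=C(\mathbb{G}_2)$ and under this isomorphism $w_i=\pi(u_i)$. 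Therefore $\text{Tens}(\mathbb{G}_2,\pi(u_i))=\text{Tens}(\mathbb{H},w_i)=\mathscr{D}=\langle\text{Tens}(\mathbb{G}_1,u_i),\mathscr{R}\rangle$, which is the claim.

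The main obstacle I anticipate is the careful bookkeeping of the duality in its ``concrete relations'' form: namely verifying that a set of intertwiner relations generates a genuine Hopf $*$-ideal and that the representation category of the quotient is exactly the monoidal category generated by the old one together with the new morphisms. This is the substance of Woronowicz's reconstruction theorem; the remaining care is to work consistently with the universal (full) $C^*$-completions, so that reconstruction returns $\mathbb{G}_1$ on the nose and the hypothesis ``$\ker(\pi)$ is generated by intertwining relations'' can be matched with the category-theoretic side.
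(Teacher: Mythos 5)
Your proposal is correct and follows essentially the same route as the paper: both apply Woronowicz's Tannaka--Krein reconstruction to the category $\langle \text{Tens}(\mathbb{G}_1,u_i),\mathscr{R}\rangle$ and then identify the resulting compact quantum group with $\mathbb{G}_2$ by matching its defining intertwiner relations (equivalently, the kernel of the quotient map from $C(\mathbb{G}_1)$) with $\ker(\pi)$. Your write-up merely spells out in more detail the universal-property bookkeeping that the paper compresses into the phrase ``the sets of the intertwining relations in $\mathbb{G}_2'$ and $\mathbb{G}_2$ coincide.''
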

\begin{proof}
We denote by $\pi_*$ the functor associated to the homomorphism $\pi$. Let $\mathbb{G}'_2$ be the compact quantum group, obtained by Tannaka-Krein duality, whose representation category is the completion of $\langle \pi_*(\text{Tens}(\mathbb{G}_1,u_i)),\mathscr{R}\rangle$. By construction, the sets of the intertwining relations in $\mathbb{G}_2'$ and $\mathbb{G}_2$ coincide: they are composed of the relations in $\mathbb{G}_1$ and the additional ones described by $\mathscr{R}$. Thus, the morphism $\pi : C(\mathbb{G}_1)\to C(\mathbb{G}_2)$, with kernel $\mathscr{R}$, factorizes into an isomorphism of compact quantum groups $\pi' : C(\mathbb{G}_2')=C(\mathbb{G}_1)/\mathscr{R}\to C(\mathbb{G}_2)$. 
\end{proof}

\subsection{Intertwiner spaces of free product quantum groups}\label{fpfusrules}
In this section, we focus on compact \textit{matrix} quantum groups. We want a diagrammatic description of the spaces of intertwiners of the free products $$*_{i=1}^p(H_N^{s_i+}), s_i\in[1,+\infty).$$ We shall prove a more general result on the free product of a finite family of compact matrix quantum groups $\mathbb{G}=\star_{i=1}^p\mathbb{G}_i$. We will use the following notation: if $U_i$ is the fundamental corepresentation matrix of the compact quantum group $\mathbb{G}_i$ then $U_i^{\epsilon}$ will denote $U_i$ if $\epsilon=1$ and its conjugate $\overline{U_i}$ when $\epsilon=-1$.

\begin{prp}\label{FRFP}
Let $\mathbb{G}$ be a free product of a finite family of compact matrix quantum groups: $\mathbb{G}=\star_{i=1}^p(\mathbb{G}_i,U_i)$. We denote by $\mathscr{T}$ the category generated by the categories $\emph{Tens}(\mathbb{G}_i,U_i)$ that is
\begin{enumerate}
\item[$\bullet$] objects are tensor products $U_{r_1}^{\epsilon_{1}}\otimes\dots\otimes U_{r_k}^{\epsilon_{k}}$ ($r_i\in\{1,\dots,p\}$),
\item[$\bullet$] morphisms are linear combinations and compositions of morphisms of the type $\emph{id}\otimes R\otimes \emph{id}$ where $R$ is a morphism in a certain category $\emph{Tens}(\mathbb{G}_i,U_i)$.
\end{enumerate} 

Then we have $\emph{Tens}(\mathbb{G},\{U_i\}_{i=1}^p)=\mathscr{T}$.
\end{prp}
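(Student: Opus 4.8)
The plan is to prove the nontrivial inclusion $\text{Tens}(\mathbb{G},\{U_i\})\subseteq\mathscr{T}$ by passing through Tannaka--Krein duality and then using Wang's description of $\text{Irr}(\mathbb{G})$ (Theorem \ref{frprodcrucial}) to control exactly what the generated category can see. First I would record the easy inclusion $\mathscr{T}\subseteq\text{Tens}(\mathbb{G},\{U_i\})$: every morphism $R$ of a factor $\text{Tens}(\mathbb{G}_i,U_i)$ remains an intertwiner in $\mathbb{G}$, because $U_i$ is a corepresentation of $\mathbb{G}$ via the canonical embedding $\mathbb{G}_i\hookrightarrow\mathbb{G}$, and $\text{Tens}(\mathbb{G},\{U_i\})$ is stable under tensor products, compositions and linear combinations; the operations $\text{id}\otimes R\otimes\text{id}$ therefore stay inside it. Since $\mathscr{T}$ is a concrete rigid monoidal $C^*$-category with conjugates (the solutions of the conjugate equations for each $U_i$ already lie in $\text{Tens}(\mathbb{G}_i,U_i)$), Woronowicz's Tannaka--Krein duality \cite{Wor88} produces a compact matrix quantum group $\mathbb{H}$ with $\text{Rep}(\mathbb{H})$ equal to the completion $\overline{\mathscr{T}}$ of $\mathscr{T}$, and fundamental corepresentations $\{U_i\}$. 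The inclusion of categories dualizes to a surjection $C(\mathbb{H})\twoheadrightarrow C(\mathbb{G})$, so that $\text{Hom}_{\mathbb{H}}(x,y)=\mathscr{T}(x,y)\subseteq\text{Hom}_{\mathbb{G}}(x,y)$ for all objects $x,y$. The whole problem thus reduces to showing this surjection is an isomorphism, equivalently that these Hom-spaces coincide.

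The key observation is that the inclusion of Hom-spaces makes two of the three defining features of $\text{Irr}(\mathbb{G})$ automatic in $\mathbb{H}$. Let $w=w_1\otimes\dots\otimes w_n$ be an alternating word, $w_j\in\text{Irr}(\mathbb{G}_{i_j})\setminus\{1\}$ with $i_j\ne i_{j+1}$; each $w_j$ is a subobject of a tensor word in $U_{i_j},\overline{U_{i_j}}$ cut out by morphisms of $\text{Tens}(\mathbb{G}_{i_j})\subseteq\mathscr{T}$, so $w$ is an object of $\overline{\mathscr{T}}$. By Theorem \ref{frprodcrucial} such $w$ is irreducible in $\mathbb{G}$, hence $\text{End}_{\mathbb{H}}(w)\subseteq\text{End}_{\mathbb{G}}(w)=\mathbb{C}\,\text{id}$ forces $w$ irreducible in $\mathbb{H}$; likewise distinct words $w\ne w'$ satisfy $\text{Hom}_{\mathbb{H}}(w,w')\subseteq\text{Hom}_{\mathbb{G}}(w,w')=0$, so they are inequivalent in $\mathbb{H}$. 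The remaining point --- that these words exhaust $\text{Irr}(\mathbb{H})$ --- is where the actual argument lies. Since every object of $\overline{\mathscr{T}}$ is a subobject of some $U_{r_1}^{\epsilon_1}\otimes\dots\otimes U_{r_k}^{\epsilon_k}$, I would argue by induction on $k$ that each such word decomposes in $\overline{\mathscr{T}}$ into a direct sum of alternating words. The inductive step amounts to decomposing $w\otimes U_i$ for an alternating word $w$: if $w$ ends in a letter of a factor $\ne i$ I decompose $U_i$ inside $\text{Rep}(\mathbb{G}_i)$, while if $w=w'\otimes w_n$ ends in factor $i$ I decompose $w_n\otimes U_i$ inside $\text{Rep}(\mathbb{G}_i)$; in both cases the required isometries have the form $\text{id}\otimes\iota$ with $\iota$ a morphism of $\text{Tens}(\mathbb{G}_i)\subseteq\mathscr{T}$, so the decomposition genuinely takes place in $\overline{\mathscr{T}}$, and the resulting summands (absorbing any trivial summand $w'\otimes 1=w'$) are again alternating words.

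Finally I would assemble these facts: $\text{Irr}(\mathbb{H})$ is exactly the set of alternating words, in the natural bijection with $\text{Irr}(\mathbb{G})$ given by Theorem \ref{frprodcrucial}, and each $\mathbb{H}$-isotypic decomposition of an object is realized by morphisms of $\mathscr{T}\subseteq\text{Tens}(\mathbb{G})$, hence is simultaneously the $\mathbb{G}$-isotypic decomposition into the same $\mathbb{G}$-irreducibles. Counting multiplicities $m_w$, one gets $\dim\text{Hom}_{\mathbb{H}}(x,y)=\sum_w m_w(x)\,m_w(y)=\dim\text{Hom}_{\mathbb{G}}(x,y)$ for all $x,y$; combined with the inclusion $\mathscr{T}(x,y)\subseteq\text{Hom}_{\mathbb{G}}(x,y)$ this yields equality, i.e. $\mathscr{T}=\text{Tens}(\mathbb{G},\{U_i\})$. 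I expect the main obstacle to be the exhaustion step: one must check that \emph{only} single-factor morphisms (tensored with identities) are ever needed to split a word, which is precisely where the free-product structure of Theorem \ref{frprodcrucial} --- the irreducibility of tensor products of irreducibles coming from distinct factors --- is indispensable, since it guarantees that no genuinely mixed intertwiner must be produced by hand.
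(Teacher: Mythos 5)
Your argument is correct, but it takes a genuinely different route from the paper's. The paper proves the hard inclusion $\text{Tens}(\mathbb{G},\{U_i\}_{i=1}^p)\subset\mathscr{T}$ directly: Frobenius reciprocity reduces everything to fixed vectors $T:1\to U_{s_1}^{\eta_1}\otimes\dots\otimes U_{s_l}^{\eta_l}$, and one inducts on the number of maximal sub-blocks, writing $\text{id}=\bigotimes_i\left(P_i+(\text{id}-P_i)\right)$ with $P_i$ the projection onto the $\mathbb{G}_i$-fixed vectors of the $i$-th maximal sub-block; the purely $(\text{id}-P_i)$ term annihilates $T$ because, by Theorem \ref{frprodcrucial}, a tensor product of non-trivial irreducibles coming from alternating factors contains no copy of the trivial corepresentation, and every remaining term factors through some $\text{id}\otimes P_{i_0}\otimes\text{id}$, which lies in $\mathscr{T}$ by the induction hypothesis applied to the shorter word $C_{i_0}$. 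You instead reconstruct a compact quantum group $\mathbb{H}$ from $\mathscr{T}$ via Tannaka--Krein, identify $\text{Irr}(\mathbb{H})$ with the alternating words by a letter-by-letter decomposition carried out entirely inside $\mathscr{T}$, and conclude by matching multiplicities that $\dim\text{Hom}_{\mathbb{H}}(x,y)=\dim\text{Hom}_{\mathbb{G}}(x,y)$, which upgrades the a priori inclusion of Hom-spaces to an equality. Both proofs lean on exactly the same external input (Wang's description of the free-product fusion rules), but where the paper exhibits an explicit expansion of an arbitrary invariant vector in terms of the generators of $\mathscr{T}$, you obtain the equality by a dimension count; your route also shows, as a byproduct, that the full fusion semiring of $\mathbb{G}$ is generated by single-factor data. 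Two points deserve to be made explicit in your write-up: first, the Tannaka--Krein step needs $\mathscr{T}$ to be closed under adjoints and to contain solutions of the conjugate equations for every tensor word --- both hold because each $\text{Tens}(\mathbb{G}_i,U_i)$ is a rigid $C^*$-category and the duality morphism of a word is a composition of maps of the form $\text{id}\otimes R\otimes\text{id}$; second, you are using that the reconstructed $\mathbb{H}$ has \emph{exactly} the prescribed intertwiner spaces (not merely containing them), which is the content of Woronowicz's duality theorem and is what turns your multiplicity computation into the desired identity $\mathscr{T}(x,y)=\text{Hom}_{\mathbb{G}}(x,y)$.
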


\begin{proof}
We first claim that $\mathscr{T}\subset \text{Hom}(\mathbb{G},\{U_i\})$. Indeed, if $$R\in \text{Hom}_{\mathbb{G}_r}(U_{r}^{\epsilon_1}\otimes\dots\otimes U_{r}^{\epsilon_k}, U_{r}^{\eta_1}\otimes\dots\otimes U_{r}^{\eta_l})$$ (for a fixed $r\in\{1,\dots,p\}$), then we clearly also have $$R\in \text{Hom}_{\mathbb{G}}(U_{r}^{\epsilon_1}\otimes\dots\otimes U_{r}^{\epsilon_k}, U_{r}^{\eta_1}\otimes\dots\otimes U_{r}^{\eta_l})$$ and moreover $\text{Tens}(\mathbb{G},\{U_i\}_{i=1}^p)$ is stable under the operations used to generate $\mathscr{T}$.


Now, we prove the other inclusion $\text{Tens}(\mathbb{G},\{U_i\}_{i=1}^p)\subset\mathscr{T}$.
The Frobenius reciprocity allows us to restrict to the cases $k=0$ : indeed the duality maps $T_r$ (see (\ref{eqconj})) are compositions of maps $\text{id}\otimes T_{r_i}\otimes \text{id}$ which are in $\mathscr{T}$. Denoting $1$ the trivial corepresentation, we have to prove that the morphisms $1\to U_{s_1}^{\eta_1}\otimes\dots\otimes U_{s_l}^{\eta_l} $ (i.e. the fixed vectors of this latter tensor product) are linear combinations of compositions of maps of the type $\text{id} \otimes R \otimes \text{id}$, where $R$ is a fixed vector for some factor $\mathbb{G}_i$.


Let $T$ be a morphism $T: 1\to U_{s_1}^{\eta_1}\otimes\dots\otimes U_{s_l}^{\eta_l} $ and set $V=U_{s_1}^{\eta_1}\otimes\dots\otimes U_{s_l}^{\eta_l} $. We will call any tensor product $$U_{s}^{\eta_t}\otimes\dots\otimes U_{s}^{\eta_{t+m}}=\bigotimes_{r=t}^{t+m}U_{s}^{\eta_r},$$ a sub-block of $V$ if this is a ``sub-tensor word" of $V$ such that $s_t=s_{t+1}=\dots=s_{t+m}=s$ ($1\le t\le\dots\le t+m\le l$) i.e. a tensor word in $U_s$ and $\overline{U_s}$ coming from the same copy $\mathbb{G}_i$ ; such a sub-block will be called \emph{maximal} when $s_{t-1}\ne s\ne s_{t+m+1}$ (whenever this is well defined).

We are going to prove the desired assertion by induction over the number of  maximal sub-blocks. The initialization corresponds to the case where there is only one maximal sub-block i.e. in this case each copy $U_{s_i}^{\eta_i}$ comes from the same factor $\mathbb{G}_{s_i}$ and thus the assertion is clear.

We denote the maximal sub-blocks by $$B_i=\bigotimes_{r=t_i}^{t_i+m_i}U_{s_i}^{\eta_{r}}, i=1,\dots, k$$ ($k\ge1$) with constant index $s_i$ and we set for all $i=1,\dots,k$ $$C_i=\underset{r>t_i+m_i}{\bigotimes_{r<t_i,}}U_{s_r}^{\eta_{r}}.$$ 
We fix $(S_{i}^j)_j$ an orthonormal basis of $\text{Fix}_{B_i}:=\text{Hom}_{\mathbb{G}_i}(1, B_i)$, note that $S_i^j\in \mathscr{T}$. 

Let $\forall i=1,\dots,k$, $P_i: B_i\to B_i$ be the orthogonal projections from the space of $B_i$ to $\text{Fix}_{B_i}$, that is, with our notation
$
P_i=\sum_jS_{i}^jS_{i}^{j*}.
$
Notice that $$(\text{id}\otimes P_i\otimes \text{id})\circ T=\sum_j(\text{id}\otimes S_{i}^j\otimes \text{id})\circ(\text{id}\otimes S_i^{j*}\otimes \text{id})\circ T,$$
and that for all $j$ $$[(\text{id}\otimes S_i^{j*}\otimes \text{id})\circ T: 1\to C_i]\in \text{Hom}_{\mathscr{T}}(1,C_i)$$ 
since $C_i$ has less maximal sub-blocks than $V$. So, we obtain that
\begin{equation}\label{recidT}
[(\text{id}\otimes P_i\otimes \text{id})\circ T : 1\to V]=\sum_j\lambda_j(\text{id}\otimes R_{j,1}\otimes \text{id})\circ\dots\circ R_{j,r}
\end{equation}
is a linear combination of composition of maps of type $\text{id}\otimes R_j\otimes \text{id}$ where $R_j$ is a fixed vector for some factor $\mathbb{G}_j,$ in other words $(\text{id}\otimes P_i\otimes \text{id})\circ T$ is a morphism in $\mathscr{T}$.

On the other hand, we can write  
\begin{equation}\label{expT1}
T=(P_1+(\text{id}-P_1))\otimes\dots\otimes( P_i+(\text{id}-P_i))\otimes\dots \otimes( P_k+(\text{id}-P_k))\circ T.
\end{equation}
We denote $\forall i=1,\dots,k$, 
$$
P_i^{(\epsilon_i)}= \left\{
    \begin{array}{ll}
        P_i & \mbox{if } \epsilon_i=1, \\
        \text{id}-P_i & \mbox{if } \epsilon_i=-1.\\
    \end{array}
\right.
$$

We are going to expand (\ref{expT1}) and conclude using the crucial result for our purpose, Theorem \ref{frprodcrucial}. The properties of the fusion rules of a free product of compact quantum groups recalled in Theorem \ref{frprodcrucial} yield that $P_1^{(-1)}\otimes \dots\otimes P_k^{(-1)}$ maps $V$ onto some direct sum $$\bigoplus_{r_i\ne1}r_1^{(s_1)}\otimes\dots\otimes r_k^{(s_k)}.$$ Indeed, each projection $P^{(-1)}_i$ maps the space of $B_i$ on the orthogonal complement of $\text{Fix}_{B_i}$ so that $P_i^{(-1)}(B_i)$ decomposes as a direct sum of \emph{non-trivial} irreducible $\mathbb{G}_i$-corepresentations. Hence, since the sub-blocks are maximal, the calculation rules of the alternated tensor words of corepresentations in a free product of compact quantum groups imply that the tensor product $P_1^{(-1)}(B_1)\otimes\dots\otimes P_k^{(-1)}(B_k)$ decomposes into non-trivial irreducible representations of the free product quantum group. 

Now, since $\text{Im}(T)$ is a copy of the trivial corepresentation $1\subset V$, we have $P_1^{(-1)}\otimes \dots\otimes P_k^{(-1)}\circ T=0$. Then we get
\begin{equation*}\label{T1}
T=\sum_{(\epsilon_1,\dots,\epsilon_k)\in \Omega}\left(P_1^{(\epsilon_1)}\otimes\dots\otimes P_k^{(\epsilon_k)}\right)\circ T,
\end{equation*}
where $\Omega=\{-1,+1\}^k\backslash\{(-1,-1,\dots,-1)\}$.

Hence we can write
\begin{align*}
T&=\sum_{(\epsilon_1,\dots,\epsilon_k)\in \Omega}\left(P_1^{(\epsilon_1)}\otimes\dots\otimes( P_{i_0})\otimes\dots\otimes P_k^{(\epsilon_k)}\right)\circ T &&\text{($\epsilon_{i_0}=1$)}\\ 
&=\sum_{(\epsilon_1,\dots,\epsilon_k)\in \Omega}\left(P_1^{(\epsilon_1)}\otimes\dots\otimes(\text{id})\otimes\dots\otimes P_k^{(\epsilon_k)}\right)\circ(\text{id}\otimes P_{i_0}\otimes \text{id})\circ T.
\end{align*}
and $T$ is as announced by (\ref{recidT}) and the definition of the maps $P_i$.

\end{proof}

\subsection{Intertwiner spaces in $H_N^+(\Gamma)$} 
Let $N$ be an integer $N\ge4$. Before proving a result for $H_N^+(\Gamma)=\widehat{\Gamma}\wr_* S_N^+$ similar to Theorem \ref{basicsAHS}, we give a collection of corepresentations, called basic corepresentations, for $H_N^+(\Gamma)=(C(H_N^+(\Gamma)),\Delta)$. We then describe their intertwiner spaces in terms of diagrams. With the notation of Example \ref{exefond}, we can obtain the following result.



\begin{prp}\label{basicGamma}
The algebra $C(H_N^+(\Gamma))$ has a family of $N$-dimensional (basic) unitary corepresentations $\{a(g): g\in\Gamma\}$ satisfying the conditions:
\begin{enumerate}
\item for all $g\in\Gamma$, $a(g)_{ij}=a_{ij}(g)$
\item $\overline{a(g)}=a(g^{-1})$.
\end{enumerate}
\end{prp}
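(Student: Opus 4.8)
The plan is to take $a(g)$ to be literally the matrix $(a_{ij}(g))_{1\le i,j\le N}$, so that condition (1) holds by fiat, and then to verify that this matrix is a \emph{unitary corepresentation} and that condition (2) holds, using nothing more than the defining relations of $A_N(\Gamma)$ recorded in Example \ref{exefond}. So the whole statement reduces to a direct check against relations (\ref{tt1}), (\ref{tt2}) and the involution.

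First I would observe that $a(g)$ is a corepresentation for free: the coproduct formula $\Delta(a_{ij}(g))=\sum_{k=1}^N a_{ik}(g)\otimes a_{kj}(g)$ given in Example \ref{exefond} is exactly the defining identity $\Delta(u_{ij})=\sum_k u_{ik}\otimes u_{kj}$ of an $N$-dimensional corepresentation, applied to $u=a(g)$. So no work is needed for the corepresentation property itself.

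Next I would check unitarity, i.e. $a(g)a(g)^*=1=a(g)^*a(g)$, where $(a(g)^*)_{kj}=a(g)_{jk}^*=a_{jk}(g^{-1})$ by the involution $a_{ij}(g)^*=a_{ij}(g^{-1})$. For $a(g)a(g)^*$ I compute $(a(g)a(g)^*)_{ij}=\sum_k a_{ik}(g)a_{jk}(g^{-1})$; since the two factors share the \emph{second} index $k$, the second relation in (\ref{tt1}) collapses each term to $\delta_{ij}a_{ik}(gg^{-1})=\delta_{ij}a_{ik}(e)$, and summing over $k$ via (\ref{tt2}) gives $\delta_{ij}$. Symmetrically, $(a(g)^*a(g))_{ij}=\sum_k a_{ki}(g^{-1})a_{kj}(g)$; here the factors share the \emph{first} index $k$, so the first relation in (\ref{tt1}) gives $\delta_{ij}a_{ki}(g^{-1}g)=\delta_{ij}a_{ki}(e)$, and (\ref{tt2}) again yields $\delta_{ij}$. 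It is worth noting that the two halves of (\ref{tt1}) are used exactly once each, one for $aa^*$ and one for $a^*a$, which is precisely why both are present.

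Finally, condition (2) is immediate: by the involution, $\overline{a(g)}_{ij}=a(g)_{ij}^*=a_{ij}(g)^*=a_{ij}(g^{-1})=a(g^{-1})_{ij}$, so $\overline{a(g)}=a(g^{-1})$ entrywise. I do not expect any genuine obstacle here; the proof is a routine verification. The only points requiring care are the bookkeeping of which index is shared in each product, so that the correct half of (\ref{tt1}) is invoked, and matching the cancellation $gg^{-1}=e=g^{-1}g$ so that the unit relation (\ref{tt2}) can be applied to conclude $\delta_{ij}\cdot 1$.
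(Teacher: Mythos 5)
Your proof is correct and takes essentially the same route as the paper, which simply asserts that the relations (\ref{tt1}), (\ref{tt2}) and the involution make $a(g)=(a_{ij}(g))_{i,j}$ the desired family of unitary corepresentations; you have supplied the routine verification that the paper leaves implicit. Your index bookkeeping is right: the second half of (\ref{tt1}) handles $a(g)a(g)^*$ (shared second index) and the first half handles $a(g)^*a(g)$ (shared first index), with (\ref{tt2}) closing both computations.
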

\begin{proof}
It is clear, with the relations (\ref{tt1}), (\ref{tt2}), (\ref{tt3}), that setting $a(g)=(a_{ij}(g))_{i,j\in\{1,\dots,N\}}$ gives the desired family of corepresentations.
\end{proof}

\begin{nota}\label{notaLinbis} Let us first fix some notation. 
\begin{enumerate}
\item[$\bullet$] When $\mathscr{D}$ is a subset of diagrams in $NC^I$ for a certain set $I$, we denote by $\langle \mathscr{D}\rangle$ the set of all diagrams that can be obtained by usual tensor products, compositions and involutions of diagrams in $\mathscr{D}$ (see Definition \ref{NC}). 
\item[$\bullet$] If $\Gamma$ is finitely generated, we will only consider generating subsets $S_{\Gamma}$ of $\Gamma$ which are stable under inversion. We will consider the category $NC^{S_{\Gamma}}$ (see Notation \ref{notanci}).
\item[$\bullet$] We denote by $NC_{\Gamma,S_{\Gamma}}\subset NC^{S_{\Gamma}}$ the monoidal subcategory: 
\begin{enumerate}
\item[-] Where we keep all the objects of $NC^{S_{\Gamma}}$,
\item[-] Where the morphisms are the diagrams decorated by some elements $g_i\in S_{\Gamma}$ on upper points and by some elements $h_j\in S_{\Gamma}$ on lower points such that in each block $\prod_ig_i=\prod_jh_j$. We make the convention that the product of zero elements is equal to $e$.
\end{enumerate} 
\item[$\bullet$] We denote by $NC_{*}^{S_{\Gamma}}\subset NC^{S_{\Gamma}}$, the category: 
\begin{enumerate}
\item[-] Where objects are the same as in $NC^{S_{\Gamma}}$,
\item[-] Where morphisms are the ones in $NC_{\Gamma,S_{\Gamma}}$ which are such that for each block, there exists one element $g\in S_{\Gamma}$ such that the points of the block are decorated by $g$ or $g^{-1}$ (which might be equal).
\end{enumerate}
\item[$\bullet$] We denote by $NC_{\Gamma}\subset NC^{\Gamma}$, the category where the diagrams are decorated by elements $g_i,h_j\in \Gamma$ such that in each block $\prod_ig_i=\prod_jh_j$ with the same convention for products of zero elements as above. 
\end{enumerate}
\end{nota}

The following result will be useful in the proof of the next theorem. 
\begin{prp}\label{inftens} Let $\Gamma$ be a discrete group generated by a finite subset $S_{\Gamma}=\{g_1,\dots,g_p\}$ stable by inversion. We denote by $s_i$ the order of $g_i$ in $\Gamma$, with $s_i\in[1,+\infty]$. Then
$$\emph{Tens}\left(*_{i=1}^p(H_N^{s_i+}),\left\{U^{(i)}\right\}_{i=1}^p\right)=\emph{span}(\Lin(NC_*^{s_{\Gamma}})).$$



\end{prp}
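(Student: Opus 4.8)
The plan is to combine the two structural results already at our disposal: Proposition \ref{FRFP}, which reduces the intertwiner category of a free product to the category generated by its factors, and Theorem \ref{Tpindep}(2), which identifies the intertwiners of each factor $H_N^{s_i+}$ (with respect to its fundamental corepresentation $U^{(i)}$ and its conjugate) with $\text{span}\{T_p : p\in NC_{s_i}\}$, the decorations being restricted to $\pm1$ in $\Z_{s_i}$. Writing $\mathscr{T}$ for the category generated by the $\text{Tens}(H_N^{s_i+},U^{(i)})$, Proposition \ref{FRFP} gives $\text{Tens}(*_{i=1}^p(H_N^{s_i+}),\{U^{(i)}\})=\mathscr{T}$, so the whole statement reduces to the combinatorial identity $\mathscr{T}=\text{span}(\Lin(NC_*^{S_\Gamma}))$. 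The elementary morphisms to keep in mind are, for a fixed generator $g_c$ of order $s_c$, the maps $T_B$ attached to a single block $B$ all of whose legs are decorated by $g_c^{\pm1}$ and whose exponents sum to $0$ in $\Z_{s_c}$: by Theorem \ref{Tpindep}(2) these span the morphisms of the factor $H_N^{s_c+}$, hence are the building blocks of $\mathscr{T}$ under $\text{id}\otimes(-)\otimes\text{id}$, tensor product, composition and adjunction.

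For the inclusion $\mathscr{T}\subset\text{span}(\Lin(NC_*^{S_\Gamma}))$ I would first check that $NC_*^{S_\Gamma}$ is genuinely a monoidal $*$-category, i.e. stable under tensor product, composition and involution. Stability under tensor and involution is immediate from the definitions in Notation \ref{notaLinbis}; the only delicate point is composition. When two decorated diagrams are composed, their shared middle points each carry a single decoration, so a block of the composite straddling the gluing is forced to stay monochromatic: a block whose legs lie in $\{g_c,g_c^{-1}\}$ can only be joined to a block of the same colour. Moreover, since such a block lives inside the cyclic, hence abelian, group $\langle g_c\rangle\cong\Z_{s_c}$, the defining relation $\prod_i g_i=\prod_j h_j$ collapses to an additive condition on exponents modulo $s_c$, which is manifestly preserved by vertical concatenation (this is exactly why $NC_{s_c}$ is a category). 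Since $\Lin$ is a projective functor (Proposition \ref{tensorcat}), $\text{span}(\Lin(NC_*^{S_\Gamma}))$ is then stable under the corresponding operations on linear maps, the scalar factors $N^{-b(p,q)}$ remaining inside the span. As each generator $\text{id}\otimes R\otimes\text{id}$ of $\mathscr{T}$ is of this form, the inclusion follows.

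For the reverse inclusion I would show that $T_p\in\mathscr{T}$ for every $p\in NC_*^{S_\Gamma}$. Exactly as in the proof of Proposition \ref{FRFP}, Frobenius reciprocity lets me rotate all legs to the lower row: the rotation maps are built from the duality morphisms (\ref{eqconj}), and each such cup or cap joins a $g_c$-leg to a $g_c^{-1}$-leg, so it is single-coloured and belongs to both $\mathscr{T}$ and $NC_*^{S_\Gamma}$; thus it suffices to treat $p\in NC_*^{S_\Gamma}(\emptyset;\underline h)$. Now I argue by induction on the number of blocks of $p$. A non-crossing partition of a single row always contains a block $B=\{a,\dots,b\}$ of consecutive points (take a block of minimal span $\max B-\min B$: any point strictly inside its span would force a nested block of strictly smaller span). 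This $B$ is monochromatic in some colour $c$ and, having no upper point, its exponents sum to $0$ in $\Z_{s_c}$, so $T_B$ is a morphism of the factor $H_N^{s_c+}$ by Theorem \ref{Tpindep}(2). Deleting $B$ leaves a non-crossing diagram $p'\in NC_*^{S_\Gamma}$ with one fewer block, and one has the factorisation $T_p=(\text{id}^{\otimes(a-1)}\otimes T_B\otimes\text{id}^{\otimes(l-b)})\circ T_{p'}$ with $T_{p'}\in\mathscr{T}$ by the induction hypothesis; hence $T_p\in\mathscr{T}$.

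The main obstacle is the combinatorial core of the reverse inclusion together with the stability of $NC_*^{S_\Gamma}$ under composition. The crucial simplifications that make it work are that monochromaticity is propagated automatically by the colour-matching forced at glued points, and that inside a monochromatic block the potentially non-abelian condition of $NC_\Gamma$ degenerates to an abelian exponent count in $\Z_{s_c}$, so that the ordering subtleties of $NC_{\Gamma,S_\Gamma}$ never intervene. This is precisely what makes the interval-block peeling compatible with the diagrammatics of the individual reflection groups, and it is the step I expect to require the most care to state cleanly.
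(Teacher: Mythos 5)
Your proposal is correct and follows essentially the same route as the paper: reduce to the category generated by the factors via Proposition \ref{FRFP} and Theorem \ref{Tpindep}(2), observe that the generators $\mathrm{id}\otimes R\otimes\mathrm{id}$ are (spans of) diagrams in $NC_*^{S_\Gamma}$, and obtain the reverse inclusion by induction on the number of blocks, peeling off an interval (internal) block of a single colour whose exponents sum to zero. Your extra care about closure of $NC_*^{S_\Gamma}$ under composition and the Frobenius rotation to a single row only makes explicit what the paper leaves implicit.
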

\begin{proof}
We use Proposition \ref{FRFP} and Theorem \ref{Tpindep} $(2)$. We first prove the inclusion: $$\text{Tens}\left(*_{i=1}^p(H_N^{s_i+}),\left\{U^{(i)}\right\}_{i=1}^p\right)\subset\text{span}(\Lin(NC_*^{s_{\Gamma}})).$$ Notice that the morphisms of type $\id\otimes R\otimes\id$ with $R\in \text{Tens}\left(H_N^{s_i+},\left\{U^{(i)}\right\}\right)$ are linear combinations of diagrams of the form:

\setlength{\unitlength}{0.5cm}
$$\left\{\ \begin{picture}(6.2,2.6)\thicklines
\put(0,0.3){\line(0,1){1.3}}
\put(2.4,0.9){\line(0,1){0.7}}
\put(1.9,2){$g_{s_i}^{\pm1}$}
\put(1,0.3){\line(0,1){1.3}}
\put(0.8,2){$g_{s_2}$}
\put(0.1,-0.6){\dots}
\put(0.1,1.1){\dots}
\put(3.4,-1){\line(0,1){0.7}}
\put(3.2,-1.5){$g_{s_i}^{\pm1}$}
\put(3.1,2){$g_{s_i}^{\pm1}$}
\put(2.75,0.2){$p$}
\put(3.4,0.9){\line(0,1){0.7}}
\put(5.1,-0.6){$\dots$}
\put(5.1,1.1){$\dots$}
\put(5,-1){\line(0,1){2.6}}
\put(6,-1){\line(0,1){2.6}}
\put(4.8,2.){$g_{s_3}$}
\put(4.8,-1.5){$g_{s_3}$}
\put(5.8,2){$g_{s_4}$}
\put(5.8,-1.5){$g_{s_4}$}
\put(0,-1){\line(0,1){1.3}}
\put(-0.3,-1.5){$g_{s_1}$}
\put(1,-1){\line(0,1){1.3}}
\put(0.8,-1.5){$g_{s_2}$}
\put(2.4,-1){\line(0,1){0.7}}
\put(2,-0.3){\line(1,0){1.8}}
\put(2,0.9){\line(1,0){1.8}}
\put(2,-0.33){\line(0,1){1.26}}
\put(3.8,-0.33){\line(0,1){1.26}}
\put(2.1,-1.5){$g_{s_i}^{\pm1}$}
\put(-0.2,2){$g_{s_1}$}
\end{picture}\ \right\},
$$
for some $g_{s_j}\in S_{\Gamma}$, $p\in NC_{s_i}(g_{s_i}^{\pm1},\dots, g_{s_i}^{\pm1} ; g_{s_i}^{\pm1},\dots, g_{s_i}^{\pm1})$ where we identified $g_k^{\pm1}$ (of order $s_k\in[1,\infty]$) with $\pm1\in\Z_{s_k}$. It is clear that these diagrams are in $NC_*^{S_{\Gamma}}$.

The inclusion $\supset$ holds because any diagram in $NC_*^{S_{\Gamma}}$ decomposes as a vertical concatenation of diagrams of the above form. Indeed, since any partition $p\in NC_*^{S_{\Gamma}}$ is non-crossing, one can ``pull down" one block of $p$ without producing any crossing (such blocks are sometimes called internal blocks). The inclusion is then easily proved by induction over the number of blocks.
\end{proof}

We are now ready to prove the following theorem:

\begin{thm}\label{fusrulesbis} If $\Gamma$ is a finitely generated discrete group $\Gamma=\langle S_{\Gamma}\rangle$ with $S_{\Gamma}=\{g_1,\dots,g_p\}$ stable by inversion, then we have:
$$\emph{Tens} (H_N^+(\Gamma),a(g) : g\in S_{\Gamma})=\emph{span} \{\Lin\langle NC_{\Gamma,S_{\Gamma}}\rangle\}.$$
\end{thm}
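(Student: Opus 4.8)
The plan is to realise $H_N^+(\Gamma)$ as a quantum subgroup of the free product $*_{i=1}^pH_N^{s_i+}$ via the surjection $\pi_1$ of Proposition \ref{arrows}, and then to transport the diagrammatic description of Proposition \ref{inftens} across $\pi_1$ using Woronowicz's Tannaka--Krein duality in the form of Proposition \ref{spanTK}. First I would check that $\pi_1$ is surjective: relation (\ref{tt1}) gives $a_{ij}(g_r)a_{ik}(g_s)=\delta_{jk}a_{ij}(g_rg_s)$, so the images $a_{ij}(g_r)$ of the generators $U^{(i)}_{ij}$ already generate every $a_{ij}(g)$ since $S_\Gamma$ generates $\Gamma$. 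Hence $H_N^+(\Gamma)\subset *_{i=1}^pH_N^{s_i+}$, and because all fundamental corepresentations here are $N$-dimensional, the relevant intertwiner spaces sit inside the common ambient category $\text{Vect}(H_i)$ with $H_i=\mathbb{C}^N$ and may be compared directly as subspaces.

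For the inclusion $\text{span}\{\Lin\langle NC_{\Gamma,S_\Gamma}\rangle\}\subseteq \text{Tens}(H_N^+(\Gamma),a(g))$ I would verify directly that each $T_p$ with $p\in NC_{\Gamma,S_\Gamma}$ is an intertwiner. This is the standard ``partition yields a fixed vector'' computation: using the coproduct $\Delta(a_{ij}(g))=\sum_k a_{ik}(g)\otimes a_{kj}(g)$ together with (\ref{tt1}) and (\ref{tt2}), the intertwining identity $a(\underline h)(1\otimes T_p)=(1\otimes T_p)a(\underline g)$ reduces, block by block, to exactly the condition $\prod_{\text{upper}}g=\prod_{\text{lower}}h$ in $\Gamma$ defining $NC_{\Gamma,S_\Gamma}$. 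Since this block condition telescopes under vertical concatenation and is reversed-and-inverted under the involution (matching $a_{ij}(g)^*=a_{ij}(g^{-1})$), the family $NC_{\Gamma,S_\Gamma}$ is closed under tensor product, composition and involution, so the $T_p$ form a category and the inclusion follows.

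For the reverse inclusion I would apply Proposition \ref{spanTK} with $\mathbb{G}_1=*_{i=1}^pH_N^{s_i+}$ and $\mathbb{G}_2=H_N^+(\Gamma)$, the point being to identify $\ker\pi_1$ as generated by a set $\mathscr R$ of intertwining relations. These are precisely the relations present in $H_N^+(\Gamma)$ but absent from the free product: the cross-factor multiplicativity $a_{ij}(g_r)a_{ik}(g_s)=\delta_{jk}a_{ij}(g_rg_s)$, and the identification of the magic unitaries forced by $a_{ij}(g_r)a_{ij}(g_r)^*=a_{ij}(e)=v_{ij}$. Each such relation is implemented by a morphism of $\text{Vect}(H_i)$ whose underlying diagram is a single block satisfying the $\Gamma$-product condition, hence lies in $\text{span}\{\Lin\langle NC_{\Gamma,S_\Gamma}\rangle\}$. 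Proposition \ref{spanTK} then yields $\text{Tens}(H_N^+(\Gamma),a(g))=\langle \text{Tens}(*_iH_N^{s_i+},U^{(i)}),\mathscr R\rangle$; combining this with Proposition \ref{inftens} (which identifies the first category with $\text{span}(\Lin(NC_*^{S_\Gamma}))$ and uses $N\ge4$ through the linear independence of Theorem \ref{Tpindep}(3)) and with the inclusion $NC_*^{S_\Gamma}\subseteq NC_{\Gamma,S_\Gamma}$, both families of generators of the right-hand category lie in the closed subspace $\text{span}\{\Lin\langle NC_{\Gamma,S_\Gamma}\rangle\}$, giving $\subseteq$ and hence equality.

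I expect the main obstacle to be the precise identification of $\mathscr R$ and the verification that $\ker\pi_1$ is genuinely generated by these intertwining relations, so that the hypotheses of Proposition \ref{spanTK} are met: one must check that no further relations are needed and that every relation of $\Gamma$ translates into an honest morphism of $\text{Vect}(H_i)$ lying in $NC_{\Gamma,S_\Gamma}$. The direct verification of the first inclusion is routine but bookkeeping-heavy, the delicate point being to track the orientation conventions (which legs carry $g$ versus $g^{-1}$) consistently through composition and involution of diagrams.
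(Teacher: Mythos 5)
Your proposal follows essentially the same route as the paper: realize $H_N^+(\Gamma)$ as a quotient of $*_{i=1}^pH_N^{s_i+}$ via $\pi_1$, identify $\ker\pi_1$ with a set of intertwining relations implemented by the one-block partitions of $NC_{\Gamma,S_\Gamma}$, and then combine Proposition \ref{spanTK} with Proposition \ref{inftens} and the inclusion $NC_*^{S_\Gamma}\subset NC_{\Gamma,S_\Gamma}$. The one step you flag as the main obstacle --- that $\ker\pi_1$ is genuinely generated by those relations --- is exactly what the paper settles, by using the universal property of $C(H_N^+(\Gamma))$ to build a section $s:C(H_N^+(\Gamma))\to *_{r}C(H_N^{s_r+})/I$ with $s\circ\pi_1=q$, so your plan is complete once you add that argument.
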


\begin{proof}
We recall (see Proposition \ref{arrows}) that we have a morphism $\pi_1: *_{r=1}^pC(H_N^{s_r+})\to C(H_N^+(\Gamma))$ given by $$U_{i_1j_1}^{(r_1)}\dots U_{i_kj_k}^{(r_k)}\mapsto a_{i_1j_1}(g_{r_1})\dots a_{i_kj_k}(g_{r_k}),$$
with $p=|S_{\Gamma}|$.
We are going to determine the kernel of this morphism and apply Proposition \ref{spanTK}, Proposition \ref{FRFP} and Proposition \ref{inftens} to describe the category $$\text{Tens}\left(H_N^+(\Gamma), a(g) : g\in S_{\Gamma}\right).$$ 

\underline{Step 1:} We show that the kernel of $\pi_1$ is generated by the relations:
\begin{enumerate}
\item[-] $U_{ij}^{(r_1)}\dots U_{ij}^{(r_k)}=U_{ij}^{(s_1)}\dots U_{ij}^{(s_l)} \text{ if } \prod_i g_{r_i}=\prod_i g_{s_i} $,
\item[-] $U_{ij}^{(r)}U_{ik}^{(s)}=0=U_{ji}^{(r)}U_{ki}^{(s)} \text{ if } j\ne k$,
\item[-] $\sum_i U_{ij}^{(r_1)}\dots U_{ij}^{(r_k)}=\sum_j U_{ij}^{(r_1)}\dots U_{ij}^{(r_k)}=1$ \text{ if } $\prod_i g_{r_i}=e$.
\end{enumerate}
Indeed, if one denotes $I$ the associated ideal $I\subset *_{r=1}^pC(H_N^{s_r+})$, it is clear that $I\subset \ker(\pi_1)$. To prove the other inclusion, it is enough to prove that there is a morphism $s: C(H_N^+(\Gamma))\to *_{r=1}^pC(H_N^{s_r+})/I$ such that $s\circ \pi_1=q: *_{r=1}^pC(H_N^{s_r+})\to *_{r=1}^pC(H_N^{s_r+})/I$ is the canonical quotient morphism. We define $s$ as follows:
$$s(a_{ij}(g))=q(U_{ij}^{(r_1)}\dots U_{ij}^{(r_k)})$$ for $g=\prod_ig_{r_i}$. The relations satisfied by the elements $a_{ij}(g)$ (see Example \ref{exefond}) are also clearly satisfied by the elements $q(U_{ij}^{(r_1)}\dots U_{ij}^{(r_k)})$ in $*_{r=1}^pC(H_N^{s_r+})/I$. So by universality, $s$ is well defined.

Moreover, $s$ satisfies $s\circ \pi_1=q$. Hence, $\ker(\pi_1)=I$ is generated by the relations presented above.

\underline{Step 2:} 
We show that the relations generating $\ker(\pi_1)$ can be described by diagrams. With Notation (\ref{notaLinbis}), we claim that the one-block partitions $B_{k,l}\in NC_{\Gamma,S_{\Gamma}}, k,l\in\N$, decorated by certain elements $g_{r_1},\dots,g_{r_k}$ and $g_{s_1},\dots,g_{s_l}$ of $S_{\Gamma}$ with $\prod_i g_{r_i}=\prod_i g_{s_i} $ describe these relations. Their pictorial representations are as follows

\setlength{\unitlength}{0.5cm}
$$
B_{k,l}=\left\{\ \begin{picture}(5.4,2.6)\thicklines
\put(0,0.3){\line(0,1){1.3}}
\put(2.4,0.3){\line(0,1){1.3}}
\put(1,0.3){\line(0,1){1.3}}
\put(1.2,-0.5){\dots}
\put(1.2,1){\dots}
\put(0,0.3){\line(1,0){5.4}}
\put(3.4,0.3){\line(0,1){1.3}}
\put(4.4,0.3){\line(0,1){1.3}}
\put(5.4,0.3){\line(0,1){1.3}}
\put(4.9,2.1){$g_{r_k}$}
\put(0,-1){\line(0,1){1.3}}
\put(-0.3,-1.5){$g_{s_1}$}
\put(1,-1){\line(0,1){1.3}}
\put(2.4,-1){\line(0,1){1.3}}
\put(2.3,-1.5){$g_{s_l}$}
\put(-0.2,2.1){$g_{r_1}$}
\end{picture}\ \right\}\in NC_{\Gamma}((g_{r_1},\dots,g_{r_k});(g_{s_1},\dots,g_{s_l})).
$$
The conditions $$U_{ij}^{(r_1)}\dots U_{ij}^{(r_k)}=U_{ij}^{(s_1)}\dots U_{ij}^{(s_l)} \text{ with }\prod_i g_{r_i}=\prod_i g_{s_i} $$ follow from $$T_{k,l}:=T_{B_{k,l}}\in \text{Hom}\left(U^{(r_1)}\otimes\dots\otimes U^{(r_k)};U^{(s_1)}\otimes\dots\otimes U^{(s_l)}\right).$$ 
More precisely, if $(e_i)$ denotes the canonical basis of $\mathbb{C}^N$, we have 
\begin{align*}
&U^{(s_1)}\otimes\dots\otimes U^{(s_l)}(1\otimes T_{k,l})(1\otimes e_{p_1}\otimes\dots\otimes e_{p_k})=(1\otimes T_{k,l})U^{(r_1)}\otimes\dots\otimes U^{(r_k)}(1\otimes e_{p_1}\otimes\dots\otimes e_{p_k})\\
&\Leftrightarrow\delta_{p_1=\dots=p_k}{\sum_{i_1,\dots,i_l}}U^{(s_1)}_{i_1p_1}\dots U^{(s_l)}_{i_lp_1}\otimes e_{i_1}\otimes\dots\otimes e_{i_l}=(1\otimes T_{k,l}){\sum_{i_1,\dots,i_k}}U^{(r_1)}_{i_1p_1}\dots U^{(r_k)}_{i_kp_k}\otimes e_{i_1}\otimes\dots\otimes e_{i_k}\\
&\Leftrightarrow \delta_{p_1=\dots=p_k}{\sum_{i_1,\dots,i_l}}U^{(s_1)}_{i_1p_1}\dots U^{(s_l)}_{i_lp_1}\otimes e_{i_1}\otimes\dots\otimes e_{i_l}={\sum_{i_1}}U^{(r_1)}_{i_1p_1}\dots U^{(r_k)}_{i_1p_k}\otimes e_{i_1}^{\otimes l}.\\
\end{align*}
i.e. 
\begin{align*}
&T_{k,l}\in \text{Hom}\left(U^{(r_1)}\otimes\dots\otimes U^{(r_k)};U^{(s_1)}\otimes\dots\otimes U^{(s_l)}\right)\\
&\Leftrightarrow\left\{
    \begin{array}{ll}
        U^{(r_1)}_{ij}\dots U^{(r_k)}_{ij}=U_{ij}^{(s_1)}\dots U_{ij}^{(s_l)}\\\\
        U_{ip_1}^{(r_1)}\dots U_{ip_k}^{(r_k)}=0 \text{ if } p_t\ne p_s  \text{ and } U_{i_1p}^{(r_1)}\dots U_{i_kp}^{(r_k)}=0 \text{ if } i_t\ne i_s, \text{ for some } t,s.
    \end{array}
\right. 
\end{align*}
The last relations are equivalent to $U_{ij}U_{ik}=0=U_{ji}U_{ki}$ if $j\ne k$.
Similar computations with $l=0$ in the case $g_{r_1}\dots g_{r_k}=e$ give the relations $$\sum_i U_{ij}^{(r_1)}\dots U_{ij}^{(r_k)}=\sum_j U_{ij}^{(r_1)}\dots U_{ij}^{(r_k)}=1.$$

\underline{Step 3:} Notice that the one-block partitions $B_{k,l}$ generate with the usual tensor product, composition and involution operations all the diagrams in $NC_{\Gamma,S_{\Gamma}}$.
Thus, with Proposition \ref{spanTK} and Proposition \ref{inftens} and the fact that $NC_{*}^{S_{\Gamma}}\subset NC_{\Gamma,S_{\Gamma}} $ we get:
$$\text{Tens} (H_N^+(\Gamma),a(g) : g\in S_{\Gamma})=\text{span} \{\Lin\langle NC_{\Gamma,S_{\Gamma}},NC_{*}^{S_{\Gamma}}\rangle\}=\text{span}\{\Lin \langle NC_{\Gamma,S_{\Gamma}}\rangle\}.$$

\end{proof}
The previous theorem can be generalized as follows:

\begin{thm}\label{fusrules}
Let $\Gamma$ be any discrete group. Then for all $g_{1},\dots, g_{k}$ and $h_1,\dots,h_l$ elements in $\Gamma$:
\begin{align*}
\emph{Hom}_{H_N^+(\Gamma)}&\left(a(g_{1})\otimes\dots\otimes a(g_{k}),a(h_{1})\otimes\dots\otimes a(h_l)\right)
\\&=\emph{span}\left\{T_p: p\in NC_{\Gamma}((g_1,\dots,g_k);(h_1,\dots,h_l))\right\},
\end{align*}
where the sets $NC_{\Gamma}((g_1,\dots,g_k);(h_1,\dots,h_l))$ are composed of non-crossing partitions having the property that, when putting the elements $g_{i}$ and $h_{j}$ on the upper and lower row respectively, then in each block we must have $\prod_{i} g_{i}=\prod_{j} h_{j}$. 
\end{thm}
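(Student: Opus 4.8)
The plan is to reduce the statement to the finitely generated situation already settled in Theorem \ref{fusrulesbis}. Fix $g_1,\dots,g_k,h_1,\dots,h_l\in\Gamma$ and note first that, for these fixed objects, the decorations on the points of a diagram are prescribed; the set $NC_\Gamma((g_1,\dots,g_k);(h_1,\dots,h_l))$ therefore records only which non-crossing partitions of the $k+l$ points satisfy the block condition $\prod_i g_i=\prod_j h_j$. Consequently it coincides with $NC_{\Gamma,S_\Gamma}((g_1,\dots,g_k);(h_1,\dots,h_l))$ as soon as all the $g_i,h_j$ belong to a symmetric generating set $S_\Gamma$, since the two categories impose the same block condition. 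Thus, when $\Gamma$ is finitely generated, I would simply choose a finite symmetric generating set containing all the $g_i$ and $h_j$ and read off the Hom-space from Theorem \ref{fusrulesbis}.

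For an arbitrary discrete group $\Gamma$ I would pass to the finitely generated subgroup $\Gamma_0=\langle g_1,\dots,g_k,h_1,\dots,h_l\rangle\leq\Gamma$. Because $\Gamma_0\hookrightarrow\Gamma$, a product of the chosen elements equals another one in $\Gamma_0$ if and only if it does in $\Gamma$, so $NC_{\Gamma_0}((g_i);(h_j))=NC_\Gamma((g_i);(h_j))$, and the finitely generated case applied to $\Gamma_0$ gives
$$\text{Hom}_{H_N^+(\Gamma_0)}\big(a(g_1)\otimes\dots\otimes a(g_k),a(h_1)\otimes\dots\otimes a(h_l)\big)=\text{span}\{T_p:p\in NC_\Gamma((g_i);(h_j))\}.$$
It then remains only to prove that the intertwiner space computed inside $H_N^+(\Gamma)$ agrees with the one computed inside $H_N^+(\Gamma_0)$.

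The inclusion $\Gamma_0\leq\Gamma$ induces a morphism of compact quantum groups $\iota:C(H_N^+(\Gamma_0))\to C(H_N^+(\Gamma))$ determined by $a_{ij}(g)\mapsto a_{ij}(g)$ for $g\in\Gamma_0$ (the defining relations of Example \ref{exefond} are preserved since $\Gamma_0$ is a subgroup, and $\iota$ intertwines the coproducts), and $\iota$ sends the object $a(g_1)\otimes\dots\otimes a(g_k)$ to the identically labelled object of $H_N^+(\Gamma)$. Functoriality of $\iota$ immediately yields $\text{Hom}_{H_N^+(\Gamma_0)}\subseteq\text{Hom}_{H_N^+(\Gamma)}$, which is the easy half of the theorem. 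For the reverse inclusion I would use that, for fixed finite-dimensional corepresentations $u,v$, the orthogonal projection of $B(H_u,H_v)$ onto $\text{Hom}(u,v)$ is a fixed completely positive map obtained by averaging $T$ against the matrix coefficients of $u$ and $v$ with respect to the Haar state $h$. Since every coefficient of $u=a(g_1)\otimes\dots\otimes a(g_k)$ and $v=a(h_1)\otimes\dots\otimes a(h_l)$ lies in $\iota(\Pol(H_N^+(\Gamma_0)))$, this averaging map, and hence its range, depends only on the restriction of $h_\Gamma$ to $\iota(\Pol(H_N^+(\Gamma_0)))$; it therefore coincides with the corresponding object for $H_N^+(\Gamma_0)$ as soon as $h_\Gamma\circ\iota=h_{\Gamma_0}$.

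The crux is thus the compatibility $h_\Gamma\circ\iota=h_{\Gamma_0}$, equivalently, by uniqueness of the Haar state and the fact that $h_\Gamma\circ\iota$ is a bi-invariant state as soon as $\iota$ is injective, the injectivity of $\iota$. This is the step I expect to be the main obstacle, and it cannot be seen purely algebraically: a $*$-homomorphic retraction annihilating the generators $a_{ij}(g)$ with $g\notin\Gamma_0$ is obstructed by relations such as $a_{ij}(g)a_{ij}(g^{-1})=a_{ij}(e)$. The natural remedy is to realize $H_N^+(\Gamma_0)$ as a genuine quantum subgroup of $H_N^+(\Gamma)$ by lifting the canonical trace-preserving conditional expectation $L(\Gamma)\to L(\Gamma_0)$ through the free wreath product: applying this expectation on each of the $N$ copies of $\widehat{\Gamma}$ and the identity on $C(S_N^+)$ should, by compatibility with the free-probabilistic structure of $\widehat{\Gamma}\wr_*S_N^+$, assemble into a Haar-trace-preserving conditional expectation $E:L^\infty(H_N^+(\Gamma))\to L^\infty(H_N^+(\Gamma_0))$ with $E\circ\iota=\text{id}$. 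Faithfulness of the reduced Haar states would then give both the injectivity of $\iota$ and the identity $h_\Gamma\circ\iota=h_{\Gamma_0}$, closing the reduction.
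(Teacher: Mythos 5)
Your reduction is exactly the one the paper uses: pass to the finitely generated subgroup $\Gamma_0=\langle g_1,\dots,g_k,h_1,\dots,h_l\rangle$, observe that $NC_{\Gamma_0}((g_i);(h_j))=NC_{\Gamma}((g_i);(h_j))$ because $\Gamma_0$ embeds in $\Gamma$, and invoke Theorem \ref{fusrulesbis} for a symmetric finite generating set containing the given elements. You also correctly isolate the only nontrivial point in this reduction, namely that the intertwiner space computed in $H_N^+(\Gamma)$ coincides with the one computed in $H_N^+(\Gamma_0)$; the paper disposes of this with the bare assertion that $C(H_N^+(\Gamma_0))$ is a sub-Woronowicz-$C^*$-algebra of $C(H_N^+(\Gamma))$ with full subcategory of intertwiners, and your analysis (the inclusion $\mathrm{Hom}_{H_N^+(\Gamma_0)}\subseteq\mathrm{Hom}_{H_N^+(\Gamma)}$ is free from functoriality of $\iota$, while the reverse inclusion is equivalent to injectivity of $\iota$ on $\Pol$, equivalently to $h_{\Gamma}\circ\iota=h_{\Gamma_0}$) is accurate.

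The gap is that this last equivalence is where your argument stops being a proof. The conditional expectation $E$ you invoke does not come for free: the free wreath product is a quotient of $A^{*N}*C(S_N^+)$ by nontrivial commutation relations, its Haar state is \emph{not} the free product of the Haar states of the factors, and there is no formula in the paper (or in \cite{Bic04}) expressing $h_{\Gamma}$ in terms of $h_{\widehat\Gamma}$ and $h_{S_N^+}$ from which the asserted ``compatibility with the free-probabilistic structure'' could be read off. Worse, the standard way to compute $h_\Gamma$ on coefficients of $a(g_1)\otimes\dots\otimes a(g_k)$ is precisely via the dimension of the fixed-point space, i.e.\ via the theorem you are trying to prove, so this route risks circularity unless you supply an independent construction of $E$. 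A cleaner way to close the gap with the tools already at hand is to note that the proofs of Proposition \ref{FRFP}, Proposition \ref{inftens} and Theorem \ref{fusrulesbis} never use finiteness of the generating set in an essential way (the induction in Proposition \ref{FRFP} is over the number of maximal sub-blocks of a \emph{fixed} tensor word, and Wang's free product construction accepts arbitrary families), so one may take $S_\Gamma$ to be any symmetric generating set of $\Gamma$ itself and obtain the statement directly, bypassing the comparison between $H_N^+(\Gamma_0)$ and $H_N^+(\Gamma)$ altogether.
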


\begin{proof}
We consider the subgroup $\Gamma'\subset\Gamma$ generated by the elements $g_1,\dots,g_k$ and $h_1,\dots,h_l$. One can easily construct a generating subset $S_{\Gamma'}\subset\Gamma'$ of $\Gamma'$ containing the elements $g_1,\dots,g_k$ and $h_1,\dots,h_l$ which is stable by inversion. Notice that $C(H_N^+(\Gamma'))$ is a sub-Woronowicz-$C^*$-algebra of $C(H_N^+(\Gamma))$ and that the morphisms in $$\text{Hom}_{H_N^+(\Gamma)}\left(a(g_{1})\otimes\dots\otimes a(g_{k}),a(h_{1})\otimes\dots\otimes a(g_{s_l})\right)$$ are the ones in the full sub-category of intertwiners in $H_N^+(\Gamma')$. The result then follows from Theorem \ref{fusrulesbis} applied to the generating subset $S_{\Gamma'}$.
\end{proof}

We get the following corollary concerning the basic corepresentations:

\begin{crl}\label{crlbasics}
Let $N\ge4$. The basic corepresentations of $H_{N}^+(\Gamma)$ satisfy:
\begin{enumerate}
\item The corepresentations $a(g), g\ne e$ are irreducible.
\item $a(e)=1\oplus \omega(e)$, with $\omega(e)$ an irreducible corepresentation.
\item The corepresentations $\omega(e),a(g),g\in\Gamma\setminus\{e\}$ are pairwise non-equivalent.
\end{enumerate}
\end{crl}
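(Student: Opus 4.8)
The plan is to reduce every assertion to a computation of low-degree intertwiner spaces, all of which are delivered by Theorem~\ref{fusrules}. For one upper point decorated by $g$ and one lower point decorated by $h$, there are exactly two non-crossing partitions in $NC(1,1)$: the through-string joining the two points into a single block, and the diagram with two singleton blocks. The block-product condition defining $NC_\Gamma$ then reads $g=h$ for the through-string (the single block has upper product $g$ and lower product $h$), and reads $g=e$ \emph{and} $h=e$ for the two-singleton diagram (each singleton block must have product equal to the empty product $e$). Thus the whole corollary becomes a matter of recording, for each choice of $g,h\in\{e\}\cup(\Gamma\setminus\{e\})$, which of these two partitions survives, using Theorem~\ref{Tpindep}(3) so that the surviving maps $T_p$ are linearly independent and the dimension count is exact.

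For part (1) and the mutual inequivalence of the $a(g)$ in part (3), I take $k=l=1$. When $g\neq e$ only the through-string survives in $NC_\Gamma((g);(g))$, and its map is $\mathrm{id}_{\mathbb{C}^N}$ by Example~\ref{idtensor}(i); hence $\mathrm{Hom}(a(g),a(g))=\mathbb{C}\,\mathrm{id}$ and $a(g)$ is irreducible. For $g\neq h$ with $g,h\neq e$ neither partition satisfies its condition, so $\mathrm{Hom}(a(g),a(h))=\{0\}$ and $a(g)\not\sim a(h)$; the same computation with $h=e$ gives $\mathrm{Hom}(a(g),a(e))=\{0\}$ for every $g\neq e$, which I reserve for part (3).

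The substantive step is part (2). With $g=h=e$ both partitions of $NC(1,1)$ satisfy the block condition, so $\mathrm{Hom}(a(e),a(e))=\mathrm{span}\{T_0,T_1\}$, where $T_0$ is the through-string map $\mathrm{id}$ and $T_1$ is the map attached to the two-singleton diagram. Computing $T_1$ from the definition gives $T_1(e_i)=\sum_j e_j=\eta$ for all $i$, i.e. $T_1=|\eta\rangle\langle\eta|$ with $\eta=\sum_i e_i$; this is rank one, hence linearly independent from $\mathrm{id}$, so $\mathrm{Hom}(a(e),a(e))$ is a $2$-dimensional $*$-subalgebra of $M_N(\mathbb{C})$. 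It is generated by $\mathrm{id}$ and the projection $P=\tfrac1N|\eta\rangle\langle\eta|$, hence equals $\mathbb{C}P\oplus\mathbb{C}(\mathrm{id}-P)\cong\mathbb{C}^2$; a commutative endomorphism algebra of this form forces $a(e)$ to split as a sum of two inequivalent irreducibles of multiplicity one, supported on $\mathbb{C}\eta$ and on $\eta^{\perp}$. I then identify the summand on $\mathbb{C}\eta$ as the trivial corepresentation: relation~(\ref{tt2}) says $\sum_k a_{ik}(e)=1$ for all $i$, which is exactly the statement that $\eta$ is a fixed vector of $a(e)$. Writing $\omega(e)$ for the restriction to $\eta^{\perp}$ (of dimension $N-1\geq 3$) then yields $a(e)=1\oplus\omega(e)$ with $\omega(e)$ irreducible.

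It remains to finish part (3) by showing $a(g)\not\sim\omega(e)$ for $g\neq e$. Here I combine the relation $\mathrm{Hom}(a(g),a(e))=\{0\}$ obtained above with the decomposition $a(e)=1\oplus\omega(e)$, which gives $\mathrm{Hom}(a(g),\omega(e))\subset\mathrm{Hom}(a(g),a(e))=\{0\}$; since $a(g)$ is irreducible, the absence of a nonzero intertwiner rules out an isomorphism. Together with the inequivalences among the $a(g)$ from the second paragraph, this establishes that $\omega(e)$ and the $a(g)$, $g\neq e$, are pairwise non-equivalent. I expect the only genuinely delicate point to be part (2): one must correctly evaluate the two-singleton map, recognize it as a multiple of the rank-one projection onto the all-ones vector, and invoke relation~(\ref{tt2}) to pin down which summand is the trivial corepresentation; the remaining arguments are bookkeeping on the two partitions of $NC(1,1)$.
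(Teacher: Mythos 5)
Your proposal is correct and follows essentially the same route as the paper: both arguments reduce everything to counting the two candidate partitions in $NC_{\Gamma}((g);(h))$ via Theorem~\ref{fusrules} and the linear independence from Theorem~\ref{Tpindep}(3), yielding $\dim\mathrm{Hom}(a(g),a(h))=0,1,2$ in the three cases. The only cosmetic difference is in part (2), where you identify the trivial subrepresentation by exhibiting $\eta=\sum_i e_i$ as a fixed vector via relation~(\ref{tt2}), while the paper counts the one-dimensional space of fixed vectors diagrammatically; both are valid and equivalent.
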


\begin{proof}
We use the previous theorem and the fact that the linear maps $T_p$ are linearly independent (so that we can identify these maps with the associated non-crossing partitions). 
Let $g,h\in\Gamma$, the previous theorem gives that $$\text{dim}(\text{Hom}(a(g),a(h))=\# NC_{\Gamma}((g);(h)).$$
But it is easy to see that the only candidate elements in $NC_{\Gamma}((g);(h))$ are 
\setlength{\unitlength}{0.5cm}
$$
p=\left\{\ \begin{picture}(0.1,1.5)\thicklines
\put(0.05,-0.25){\line(0,1){0.8}}
\put(-0.2,0.9){$g$}
\put(-0.2,-0.9){$h$}
\end{picture}\ \right\} 
\text{ and } 
\setlength{\unitlength}{0.5cm}
q=\left\{\ \begin{picture}(0.1,1.5)\thicklines
\put(0,0.3){\line(0,1){0.5}}
\put(-0.2,1.1){$g$}
\put(0,-0.7){\line(0,1){0.5}}
\put(-0.2,-1.3){$h$}
\end{picture}\ \right\}$$ with the conditions $p\in NC_{\Gamma}((g);(h))\Leftrightarrow g=h$ and $q\in NC_{\Gamma}((g);(h))\Leftrightarrow g=h=e$.
Now we can compute the cardinal $\# NC_{\Gamma}((g);(h))$:
$$
\# NC_{\Gamma}((g);(h)) = \left\{
    \begin{array}{lll}
        0 & \mbox{if } g\ne h, \\
        1 & \mbox{if } g=h\ne e,\\
        2 & \mbox{if } g=h=e.
    \end{array}
\right.
$$
Then the second equality proves that the corepresentations $a(g), g\ne e$ are irreducible. The last equality, together with the fact that the trivial corepresentation is contained in $a(e)$ (since $\# NC_{\Gamma}((e);(e))=1$), prove that $a(e)=1\oplus \omega(e)$ where $\omega(e)$ is an irreducible corepresentation. And the fact that the basic corepresentations are pairwise non-equivalent comes from the first equality above.
\end{proof}

\subsection{Fusion rules for $H_N^+(\Gamma)$} In this subsection, $\Gamma$ denotes any discrete group and $N$ is an integer greater or equal $4$.
\begin{defi}
The fusion semiring $(R^+,\ ^-\ ,\oplus,\otimes)$ of $H_N^+(\Gamma)$ is defined as follows 
\begin{enumerate}
\item $R^+$ is the set of equivalence classes of corepresentations.
\item $(\ ^-\ ,\oplus,\otimes)$ are the usual involution, direct sum and tensor product of corepresentations of $H_N^+(\Gamma)$.
\end{enumerate}
\end{defi}

We now prepare the indexing of the irreducible corepresentations of $H_{N}^+(\Gamma)$.

\begin{defi}
Let $M=\langle\Gamma\rangle$ be the monoid formed by the words overs $\Gamma$. We endow $M$ with the following operations:
\begin{enumerate}
\item Involution: $(g_{1},\dots ,g_{k})^-=(g_{k}^{-1},\dots, g_{1}^{-1})$,
\item concatenation: for any two words, we set $$(g_{1},\dots,g_{k}),(h_{1},\dots,h_{l})=(g_{1},\dots,g_{{k-1}},g_{k},h_{1},h_{2},\dots,h_{l}),$$
\item Fusion: for two non-empty words, we set $$(g_{1},\dots,g_{k}).(h_{1},\dots,h_{l})=(g_{1},\dots,g_{{k-1}},g_{k}h_{1},h_{2},\dots,h_{l}).$$
\end{enumerate}
\end{defi}

\begin{nota}
If $(g_1,\dots,g_k)\in\langle\Gamma\rangle$, we write $|(g_1,\dots,g_k)|=k$ to denote the length of the word $(g_1,\dots,g_k)$.
\end{nota}

\begin{thm}\label{nonalter}
The irreducible corepresentations of $H_N^+(\Gamma)$ can be labelled by $\omega(x)$ with $x\in M$, with involution $\overline{\omega}(x)=\omega({\overline{x}})$ and the fusion rules:
$$\omega(x)\otimes\omega(y)=\sum_{x=u,t\ ;\ y=\overline{t},v}\omega({u,v})\ \oplus \displaystyle\sum_{\substack{x=u,t\ ;\ y=\overline{t},v\\ u\ne\emptyset,v\ne\emptyset}} \omega({u.v})$$ and we have for all $g\in\Gamma$, $\omega(g)=a(g)\ominus \delta_{g,e}1$.
\end{thm}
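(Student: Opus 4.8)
The plan is to mirror the argument of Banica and Vergnioux for $H_N^{s+}$ (Theorem \ref{basicsAHS}), replacing $\Z_s$ by $\Gamma$ throughout, the whole proof being driven by the diagrammatic description of intertwiner spaces in Theorem \ref{fusrules} together with the linear independence of the maps $T_p$ for $N\ge4$ (Theorem \ref{Tpindep}(3)). The latter lets me identify, for any words $x=(x_1,\dots,x_k)$ and $y=(y_1,\dots,y_l)$, the dimension of $\emph{Hom}(A_x,A_y)$ with the number $\#NC_\Gamma((x);(y))$ of admissible non-crossing partitions, where $A_x:=a(x_1)\otimes\dots\otimes a(x_k)$. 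This combinatorial count of partitions (subject to the block condition $\prod_i g_i=\prod_j h_j$) is the quantity I will repeatedly match against the claimed decompositions.

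First I would run an induction on the length $|x|$ to construct the $\omega(x)$ while simultaneously proving that they are irreducible, pairwise inequivalent, satisfy $\overline{\omega(x)}=\omega(\overline{x})$, and exhaust $\emph{Irr}(H_N^+(\Gamma))$. The cases $|x|\le1$ are exactly Corollary \ref{crlbasics}, giving $\omega(\emptyset)=1$, $\omega(g)=a(g)$ for $g\ne e$, and $a(e)=1\oplus\omega(e)$. The engine of the induction is the one-letter recursion
$$\omega(g)\otimes\omega(x)=\omega(g,x)\ \oplus\ \omega(gx_1,x_2,\dots,x_k)\ \oplus\ \delta_{x_1,g^{-1}}\,\omega(x_2,\dots,x_k),$$
which is the special case of the asserted fusion rule obtained from the prefix--suffix decompositions of the one-letter word $(g)$. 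Since the two rightmost summands are words of length $\le|x|$, already built at earlier stages, I would \emph{define} $\omega(g,x)$ as the corepresentation obtained by removing them from $\omega(g)\otimes\omega(x)$; the legitimacy of this (that each occurs inside with multiplicity exactly one, and that the leftover is irreducible) is checked by evaluating the relevant $\emph{Hom}$-dimensions as partition counts. Inequivalence of the new $\omega(g,x)$ from all previously constructed irreducibles follows from $\dim\emph{Hom}(\omega(g,x),\omega(g',x'))$ being $0$ or $1$ according to whether $(g,x)=(g',x')$, again a bare count. Exhaustion holds because $\Pol(H_N^+(\Gamma))$ is generated by the coefficients of the $a(g)$ and their conjugates $a(g^{-1})$, so every irreducible lives in some $A_x$, hence is one of the $\omega(y)$; the involution identity comes from $\overline{A_x}=A_{\overline{x}}$ and uniqueness of the decomposition.

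With the family $\{\omega(x)\}$ in hand, I would prove the general fusion rule by a second induction on $|x|$. Writing $x=(x_1,x')$ and $\omega(x_1)\otimes\omega(x')=\omega(x)\oplus R$, where $R$ gathers the strictly shorter summands produced by the one-letter recursion, I expand
$$\omega(x)\otimes\omega(y)=\omega(x_1)\otimes\bigl(\omega(x')\otimes\omega(y)\bigr)\ \ominus\ R\otimes\omega(y).$$
The inductive hypothesis applies to $\omega(x')\otimes\omega(y)$ and to $R\otimes\omega(y)$ (both have strictly shorter first word), and a final application of the one-letter recursion to $\omega(x_1)\otimes(-)$ turns everything into a sum of $\omega$'s. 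Collecting the surviving terms and matching them to the prefix--suffix decompositions $x=u,t$ and $y=\overline{t},v$ produces exactly the two announced sums: the concatenation term $\omega(u,v)$ and the fusion term $\omega(u.v)$, the latter only when both $u$ and $v$ are non-empty.

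I expect the main obstacle to be the combinatorial heart of these inductive steps: verifying that the partition counts $\#NC_\Gamma$ distribute precisely as the claimed sum over prefix--suffix matchings, and in particular treating the degenerate cases cleanly. The most delicate points are the splitting $a(e)=1\oplus\omega(e)$ (so multiplying by $a(e)$ is \emph{not} multiplication by an irreducible, which is why the recursion must be phrased through $\omega(g)$ rather than $a(g)$), the fusions with $gx_1=e$ which make the middle summand a length-$k$ word of the form $\omega(e,x_2,\dots)$ distinct from the reduction term $\omega(x_2,\dots)$, and the empty prefix/suffix boundary cases. Ensuring that the multiplicity-one occurrences needed to \emph{define} each $\omega(g,x)$ never collapse is exactly where the linear independence of Theorem \ref{Tpindep}(3), valid for $N\ge4$, is indispensable; everything else is routine bookkeeping once the dimension-counting dictionary of Theorem \ref{fusrules} is in place.
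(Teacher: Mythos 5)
Your proposal is correct in outline and would go through; its computational engine (the one-letter recursion $\omega(g)\otimes\omega(x)=\omega(g,x)\oplus\omega(gx_1,x_2,\dots)\oplus\delta_{x_1,g^{-1}}\omega(x_2,\dots)$, Frobenius reciprocity, and the identification of Hom-dimensions with counts of decorated non-crossing partitions via Theorem \ref{fusrules} and Theorem \ref{Tpindep}(3)) is the same as the paper's, and your Step for irreducibility is essentially the paper's Step 2. The organization differs, however. You construct the $\omega(x)$ one at a time and then propose a second induction to verify the general fusion rule; the paper instead introduces the abstract ring $\Z\langle\Gamma\rangle$ with basis $\{b_x\}$ and product \emph{defined} by the asserted fusion rules, shows it is a free ring on the $b_g$, and builds a ring homomorphism $\Phi:\Z\langle\Gamma\rangle\to R$, $b_g\mapsto a(g)\ominus\delta_{g,e}1$. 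All verifications are then funneled through a single counting statement (Lemma \ref{bigmap}, hence Lemma \ref{cocom}): the multiplicity of the trivial corepresentation in $\omega(g_1)\otimes\dots\otimes\omega(g_k)$ equals the coefficient of $b_{\emptyset}$ in $b_{g_1}\otimes\dots\otimes b_{g_k}$. Injectivity of $\Phi$ (hence pairwise inequivalence of the $\omega(x)$) follows from non-degeneracy of the quadratic form $\alpha\mapsto\#(1\in\alpha\otimes\overline{\alpha})$, irreducibility from one-dimensionality of endomorphism spaces computed by Frobenius reciprocity and the same count, and the full fusion rules come for free once $\Phi$ is a semiring isomorphism onto $R^+$ --- no second induction is needed. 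Your route buys concreteness, but at the price of checking by hand many separate multiplicity-one statements and the word-combinatorial identity underlying your second induction (essentially associativity of the recursively defined product); the paper's route isolates all the combinatorics in one lemma about the restricted partition sets $NC'_{\Gamma}$ and lets ring theory do the rest.
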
 

For proving this theorem, we consider the set of irreducible corepresentations $\text{Irr}(H_N^+(\Gamma))$, the fusion semiring $R^+$ and the fusion ring $R$ of $H_N^+(\Gamma)$. They are contained in each other as follows: $$\text{Irr}(H_N^+(\Gamma))\subset R^+\subset R.$$
We also consider the additive monoid $\N\langle\Gamma\rangle$ with basis $B:=\{b_x: x\in\langle\Gamma\rangle\}$, involution $\overline{b}_x=b_{\overline{x}}$ and fusion rules 
\begin{equation}\label{fusrules1}
b_x\otimes b_y=\sum_{x=u,t\ ;\ y=\overline{t},v} b_{u,v}\ + \displaystyle\sum_{\substack{x=u,t\ ;\ y=\overline{t},v\\ u\ne\emptyset,v\ne\emptyset}} b_{u.v}.
\end{equation}
We want to prove that $\mathbb{N}\langle\Gamma\rangle\simeq R^+$ in such a way that $B$ corresponds to $\text{Irr}(H_N^+(\Gamma))$. To do this, we are going to construct an isomorphism of semi-rings $\Phi: \mathbb{N}\langle\Gamma\rangle\to R^+$. 

We construct and study $\Phi$ at the level of $\Z\langle\Gamma\rangle$ and $R$, where $\Z\langle\Gamma\rangle$ is the free $\Z$-module with basis $(b_x)_{x\in\langle\Gamma\rangle}$. 
It is clear that $(\Z\langle\Gamma\rangle,+,\times)$ is a free ring over $\Gamma$ for the product $b_x\times b_y=b_{x,y}$ where $x,y$ is the concatenation of the words $x$ and $y$. But this is not the ring structure we are interested in.

\begin{lem}
$\Z\langle\Gamma\rangle$ is also a free ring for the product defined by the fusion rules above (\ref{fusrules1}) and denoted $\otimes$ and with neutral element $b_{\emptyset}$ where $\emptyset$ denotes the empty word.
\end{lem}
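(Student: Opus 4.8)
The plan is to show that $(\Z\langle\Gamma\rangle,+,\otimes)$ is free on the single-letter words $\{b_{(g)} : g\in\Gamma\}$, exactly the generators for which it is free under the concatenation product $\times$. The whole argument rests on a single \emph{leading term} computation. Fix words $x,y\in\langle\Gamma\rangle$ and inspect the right-hand side of (\ref{fusrules1}): a concatenation term $b_{u,v}$ coming from a decomposition $x=u,t$, $y=\overline{t},v$ has length $|u|+|v|=|x|+|y|-2|t|$, while a fusion term $b_{u.v}$ has length $|x|+|y|-2|t|-1$. Both lengths are strictly maximized by the trivial decomposition $t=\emptyset$ in the first sum, which contributes the single term $b_{x,y}$ with coefficient $1$; every other term has length strictly less than $|x|+|y|$. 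Hence
\[
b_x\otimes b_y=b_{x,y}+(\text{terms }b_z\text{ with }|z|<|x|+|y|).
\]

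From this I would deduce, by induction on $k$, that the iterated product satisfies
\[
b_{(g_1)}\otimes\dots\otimes b_{(g_k)}=b_{(g_1,\dots,g_k)}+(\text{terms of length }<k),
\]
the inductive step being immediate since $b_{(g_1)}\otimes(\cdot)$ raises length by at most $1$ and the leading-term identity supplies the top word $b_{(g_1,\dots,g_k)}$. Ordering the standard basis $\{b_x\}$ by length, this says that the family of $\otimes$-monomials in the generators $b_{(g)}$ is obtained from the basis $\{b_x\}$ by a change of coordinates that is unitriangular with respect to the length filtration, hence invertible over $\Z$. Therefore the $\otimes$-monomials themselves form a $\Z$-basis of $\Z\langle\Gamma\rangle$. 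Freeness then follows formally: the unique ring homomorphism from the genuinely free ring $\Z\langle X_g:g\in\Gamma\rangle$ sending each indeterminate $X_g$ to $b_{(g)}$ carries the word basis $X_{g_1}\cdots X_{g_k}$ onto the $\otimes$-monomials $b_{(g_1)}\otimes\dots\otimes b_{(g_k)}$; since the latter is a $\Z$-basis, this homomorphism is a $\Z$-linear bijection and hence a ring isomorphism, identifying $(\Z\langle\Gamma\rangle,+,\otimes)$ with the free ring over $\Gamma$.

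The main obstacle is that all of the above presupposes that $\otimes$ is associative, so that the $k$-fold products $b_{(g_1)}\otimes\dots\otimes b_{(g_k)}$ are unambiguous and the universal property of the free ring applies. I expect associativity to be the one genuinely delicate point: the leading-term identity only shows that $\otimes$ is associative on the associated graded ring, where it degenerates to plain concatenation, which is not by itself sufficient. One must therefore verify it directly, by expanding $(b_x\otimes b_y)\otimes b_z$ and $b_x\otimes(b_y\otimes b_z)$ and matching the two resulting families of terms through the cancellation bookkeeping prescribed by (\ref{fusrules1}) — or else invoke the general associativity of free fusion (semi)rings. Once associativity is in hand, the freeness assertion is an immediate consequence of the unitriangularity established above.
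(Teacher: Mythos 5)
Your proposal is correct and takes essentially the same route as the paper: the paper defines the ring homomorphism $F:\Z\langle X_g : g\in\Gamma\rangle\to(\Z\langle\Gamma\rangle,+,\otimes)$, $X_g\mapsto b_g$, proves surjectivity via the recursion $b_{g_1,\dots,g_k}=b_{g_1,\dots,g_{k-1}}\otimes b_{g_k}\ominus b_{g_1,\dots,g_{k-1}g_k}\ominus\delta_{g_{k-1}g_k,e}\,b_{g_1,\dots,g_{k-2}}$, and proves injectivity by exactly your leading-term/length-filtration argument, so your unitriangularity formulation is the same proof. The associativity issue you flag is a fair observation, but it is not a gap relative to the paper, which likewise takes it for granted by declaring $F$ to be a ring homomorphism out of the free ring.
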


\begin{proof}
Indeed, consider the ring $\mathbb{Z}\langle X_g, g\in\Gamma\rangle$ of non-commutative polynomials with variables indexed by $\Gamma$, and $$F: (\Z\langle X_g: g\in\Gamma\rangle,+,\times)\to(\Z\langle\Gamma\rangle,+,\otimes)$$ defined as a ring homomorphism by $X_g\mapsto b_g$. 
This morphism is bijective:

 For all $g_{1},\dots,g_{k}\in\Gamma$, we have by $(\ref{fusrules1})$:
\begin{equation}\label{calfus}
b_{g_{1},\dots ,g_{k}}=b_{{g_{1}},\dots ,g_{{k-1}}}\otimes b_{g_{k}}\ominus b_{g_1,\dots,g_{k-1}g_k} \ominus \delta_{(g_{{k-1}}g_{k},e)}b_{{g_{1}},\dots ,g_{{k-2}}}
\end{equation}
Then an induction over the length of the words $g_{1},\dots,g_{k}\in\langle\Gamma\rangle$ shows that $F$ is surjective.



Now we prove the injectivity of the morphism $F$. Let $P\in \Z\langle X_g: g\in\Gamma\rangle$ with $F(P)=0$. Suppose that $d:=\deg(P)\ge1$, i.e. that we can write $$P=\sum\lambda({g_{1},\dots,g_{d}})X_{g_{1}}\dots X_{g_{d}} +Q$$ with $\lambda(g_{1},\dots, g_{d})\ne0$, $\deg(Q)<\deg(P)$. 
Then, we have:
\begin{align*}
0=&F(P)=\sum\lambda(g_{1},\dots, g_{d})b_{g_{1}}\otimes\dots\otimes b_{g_{d}}+F(Q)\\
&=\sum\lambda(g_{1},\dots, g_{d})b_{g_{1},\dots, g_{d}}+c\ \ \ \ (c=\sum\mu_x b_x : |x|<d)\\
&\Rightarrow \lambda(x)=0\ \forall x \text{ with } |x|=d\ \text{(since $B$ is a basis of $\Z\langle\Gamma\rangle$)}.
\end{align*}
Thus we obtain $P=Q$ which contradicts $\deg(P)=d$.

The fact that $b_{\emptyset}$ is the multiplicative neutral element of $(\Z\langle\Gamma\rangle,+,\otimes)$ is clear by (\ref{fusrules1}).
\end{proof} 

Now, $(\Z\langle\Gamma\rangle,+,\otimes)$ being a free ring, we can define a ring homomorphism $\Phi: \Z\langle\Gamma\rangle\to R$ by $$\Phi(b_g)=\omega(g), g\in \Gamma,$$ where we put $\omega(g)=a(g)\ominus\delta_{g,e}1$ as in the statement. 

We shall prove that:
\begin{enumerate}
\item[$\bullet$] $\Phi: \Z\langle\Gamma\rangle\to R$ is injective,
\item[$\bullet$] $\Phi(B)\subset \text{Irr}(H_N^+(\Gamma))$,
\item[$\bullet$] $\Phi: B\to \text{Irr}(H_N^+(\Gamma))$ is surjective.
\end{enumerate}
The theorem, and in particular the fact that the conjugate $\overline{\omega}(x)$ of an irreducible equals $\omega(\bar x),$ will then follows easily.

We will denote $1$ both the trivial corepresentation in $H_N^+(\Gamma)$ and the neutral element $b_{\emptyset}$ in $\Z\langle\Gamma\rangle$ for $\otimes$. We will also denote by $z\mapsto\#(1\in z)$ both the linear forms counting:
\begin{enumerate}
\item[-] the number of copies of the trivial corepresentation contained in a $z\in R^+$,
\item[-] the coefficient of an element $z\in \Z\langle\Gamma\rangle$ relative to $b_{\emptyset}$.
\end{enumerate}
We will denote by $b_x^*(w)$ the coefficient of $w\in\Z\langle\Gamma\rangle$ relative to $b_x$ i.e. $\{b_x^* : x\in\langle\Gamma\rangle\}$ is the dual base of $\{b_x : x\in\Z\langle\Gamma\rangle\}$. That is, if $w=\sum\alpha_yb_y\in\Z\langle\Gamma\rangle$ we have $b_x^*(w)=\alpha_x\in\Z$. In particular, $\#(1\in z)=b_{\emptyset}^*(z)$. 

\begin{lem}\label{bigmap}
Let $k\in\N$. Let $g_1,\dots,g_k\in\Gamma$ and put $$P_k:=(b_{g_{1}}+ \delta_{g_{1},e}1)\otimes\dots\otimes (b_{g_{{k-1}}}+\delta_{g_{{k-1}},e}1)\otimes (b_{g_{k}}+ \delta_{g_{k},e}1).$$ Let $l\in\N$ and $\sigma : D_{\sigma}\to\{1,\dots,l\}$ be a surjective non-decreasing map defined on a subset $D_{\sigma}\subset\{1,\dots,k\}$. Put $x_j=\prod_{\sigma(i)=j}g_i$ and $x=(x_1,\dots,x_l)$. We allow $l$ to be $0$ and then in this case $x=\emptyset$.

Then $$b_x^*(P_k)=\#NC'_{\Gamma}((x_1,\dots,x_l) ; (g_1,\dots,g_k)),$$ where $NC'_{\Gamma}((x_1,\dots,x_l) ; (g_1,\dots,g_k))\subset NC_{\Gamma}((x_1,\dots,x_l) ; (g_1,\dots,g_k))$ is the sub-set composed of the elements $p\in NC_{\Gamma}((x_1,\dots,x_l) ; (g_1,\dots,g_k))$ with the additional rule that in each block there is at most one upper point and at least one lower point.
\end{lem}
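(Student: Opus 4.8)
The plan is to prove the slightly stronger statement that the displayed identity holds for \emph{every} word $x=(x_1,\dots,x_l)\in\langle\Gamma\rangle$ (not only those of the form $x_j=\prod_{\sigma(i)=j}g_i$), and then to specialise. Indeed, any partition in $NC'_\Gamma((x_1,\dots,x_l);(g_1,\dots,g_k))$ forces each upper point $x_j$ to lie in a block with exactly one upper point and at least one lower point; recording, for each lower point lying in such a ``through block'', the index of the upper point it is joined to, yields a non-decreasing surjection $\sigma$ onto $\{1,\dots,l\}$ with $x_j=\prod_{\sigma(i)=j}g_i$. Hence the count is automatically $0$ when $x$ is not of the prescribed form, which matches $b_x^*(P_k)$, so the stronger statement does specialise to the lemma. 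I will argue by induction on $k$, the case $k=0$ being immediate: $P_0=b_\emptyset$, and $NC'_\Gamma((x);\emptyset)$ is empty unless $x=\emptyset$ because every block must contain a lower point.

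For the inductive step I will use that $1=b_\emptyset$ is the unit for $\otimes$ to write $P_k=P_{k-1}\otimes b_{g_k}+\delta_{g_k,e}P_{k-1}$. Expanding $b_w\otimes b_{g_k}$ with the fusion rule (\ref{fusrules1}) (equivalently, rearranging (\ref{calfus})) produces three terms: the concatenation $b_{(w,g_k)}$; the fused word $b_{(w_1,\dots,w_{m-1},w_mg_k)}$ when $w=(w_1,\dots,w_m)\ne\emptyset$; and $b_{(w_1,\dots,w_{m-1})}$ precisely when $w$ ends in $g_k^{-1}$. Extracting the coefficient of $b_x$ and using bilinearity of $\otimes$, I obtain the recursion
\[
b_x^*(P_k)=\delta_{x_l,g_k}\,b_{(x_1,\dots,x_{l-1})}^*(P_{k-1})+b_{(x_1,\dots,x_{l-1},x_lg_k^{-1})}^*(P_{k-1})+b_{(x_1,\dots,x_l,g_k^{-1})}^*(P_{k-1})+\delta_{g_k,e}\,b_x^*(P_{k-1}),
\]
where the first two terms occur only for $l\ge1$.

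The core of the argument is then to produce the matching recursion on the combinatorial side by classifying each $p\in NC'_\Gamma((x_1,\dots,x_l);(g_1,\dots,g_k))$ according to the block $B$ containing the rightmost lower point $g_k$. The key non-crossing observation is that if $B$ contains an upper point, then this point must be $x_l$, since any $x_{l'}$ with $l'<l$ would be trapped between $g_k$ and $x_{l'}$ with no lower point available to complete its block. This splits the partitions into four disjoint and exhaustive families: $B=\{g_k\}$ (which forces $g_k=e$; delete $g_k$), matching the last term; $B$ lower-only with $|B|\ge2$ (delete $g_k$ and convert $B\setminus\{g_k\}$ into a through block attached to a new rightmost upper point labelled $g_k^{-1}$, whose block product is then $g_k^{-1}$), matching the third term; $B=\{x_l,g_k\}$ (which forces $x_l=g_k$; delete both corner points), matching the first term; and $B$ a through block on $x_l$ with at least two lower points (delete $g_k$ and relabel the upper point $x_l\rightsquigarrow x_lg_k^{-1}$), matching the second term. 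In each case I must check that the operation is a bijection onto the whole of the corresponding set $NC'_\Gamma(\,\cdot\,;(g_1,\dots,g_{k-1}))$ — using that every upper point is automatically in a through block, so no compatibility is lost — and that removing or adding a rightmost lower (respectively upper) corner point preserves both non-crossingness and the block-product condition. Applying the induction hypothesis to the four reduced counts then reproduces the algebraic recursion and closes the induction.

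The main obstacle is this combinatorial step, specifically getting the third and second bijections exactly right: these are the cases where the rightmost lower point is absorbed into a larger block, so the bijection has to trade it either for a brand-new upper letter $g_k^{-1}$ (the lower-only case) or for a modification $x_l\mapsto x_lg_k^{-1}$ of the last upper letter (the through-block case), all while keeping the block-product condition $\prod_{\text{upper}}=\prod_{\text{lower}}$ intact and verifying surjectivity onto the full target set. The remaining cases, together with the bookkeeping for empty words (the case $l=0$, where only the two lower-only families can occur), are routine.
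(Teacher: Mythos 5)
Your proposal is correct and follows essentially the same route as the paper: induction on $k$ via the identical recursion for $b_x^*(P_k)$ obtained from the fusion rules, matched term by term against the classification of partitions in $NC'_{\Gamma}$ according to whether the rightmost lower point $g_k$ is a singleton, joined only to the upper point $x_l$, joined to $x_l$ and other lower points, or joined to lower points only. The only (harmless) cosmetic differences are that you state the identity for arbitrary words $x$ and fold the $g_k=e$ contribution into a single $\delta_{g_k,e}$ term, whereas the paper restricts $x$ from the outset and treats the case $g_k=e$ separately at the end.
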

\begin{proof}


We prove the lemma by induction over $k$. The case $k=1$ is easily proved. Indeed in this case, either $x=(g)$ for some $g\in\Gamma$, or $x=\emptyset$. We have $$b_{g}^*(b_{g}+\delta_{g,e}1)=1=\#NC_{\Gamma}'((g) ; (g))$$ and 

$$
b_{\emptyset}^*(b_{g}+\delta_{g,e}b_{\emptyset})=\left\{
    \begin{array}{ll}
        1 & \mbox{if } g=e \\
        0 & \mbox{otherwise}
    \end{array}
\right\}=\#NC'_{\Gamma}(\emptyset ; (g)).
$$

Let $k\ge2$ and assume that the result is proved for $k-1$. Let $g_k\in\Gamma$ and $x=(x_1,\dots,x_l)$ be a sequence as in the statement above. We consider $P_{k-1}\otimes (b_{g_k}+\delta_{g_k,e}1)$ and we first deal with the case $g_k\ne e$. We have 

\begin{equation}\label{analo}
b_{x}^*(P_{k-1}\otimes b_{g_k})=\delta_{g_k,x_l}b_{(x_1,\dots,x_{l-1})}^*(P_{k-1})+b_{(x_1,\dots,x_lg_k^{-1})}^*(P_{k-1})+b^*_{(x_1,\dots,x_l,g_k^{-1})}(P_{k-1}).
\end{equation}

The first term of (\ref{analo}) corresponds to the concatenation operation described by the fusion rules (\ref{fusrules1}) and, by induction, it is equal to $$\#NC'_{\Gamma}((x_1,\dots,x_{l-1}); (g_1,\dots,g_{k-1})).$$ The horizontal concatenation of such non-crossing partitions with the one-block partition 
\setlength{\unitlength}{0.5cm}
$
p=\left\{\ \begin{picture}(0.1,1.5)\thicklines
\put(0.05,-0.25){\line(0,1){0.8}}
\put(-0.2,0.9){$x_l$}
\put(-0.2,-0.8){$g_k$}
\end{picture}\ \right\} 
$ gives all non-crossing partitions in $NC'_{\Gamma}((x_1,\dots,x_l) ; (g_1,\dots,g_k))$ where $g_k$ is the only lower point in its block and connected to an upper point, $x_l=g_k$.

The second term of (\ref{analo}) corresponds to the fusion operation described in (\ref{fusrules1}) and, by induction, it is equal to $$\#NC'_{\Gamma}((x_1,\dots,x_{l-1},x_lg_k^{-1}); (g_1,\dots,g_{k-1})).$$ These non-crossing partitions carry the upper point $\{x_lg_k^{-1}\}$ and thus, because of the definition of $NC'_{\Gamma}$, we have $x_l=(\prod_ig_i)g_{k}$ for some $g_i\in\{g_1,\dots,g_{k-1}\}$. We obtain this way, all the non-crossing partitions in $NC'_{\Gamma}((x_1,\dots,x_l) ; (g_1,\dots,g_k))$ where $g_k$ is connected to some other lower points and to the upper point $\{x_l\}$.

The third and last term of (\ref{analo}), corresponds to the case where $g_k$ is the inverse of the last term in the sequence $(x_i)_i$, by induction, it is equal to $$\#NC_{\Gamma}'((x_1,\dots,x_l,g_k^{-1});(g_1,\dots,g_{k-1})).$$ These partitions carry the upper point $g_k^{-1}$ and thus we have $(\prod_ig_i)g_k=e$ for some $g_i\in\{g_1,\dots,g_{k-1}\}$. We obtain, this way, all the non-crossing partitions in $NC_{\Gamma}'((x_1,\dots,x_l);(g_1,\dots,g_k))$ where $g_k$ is connected to other lower points but to no upper point.

Altogether, we have proved $$b_x^*(P_{k-1}\otimes b_{g_k})=\#NC'_{\Gamma}((x_1,\dots,x_l) ; (g_1,\dots,g_k)).$$

In the case, $g_k=e$ we have
\begin{align*}
&b_{x}^*(P_{k-1}\otimes (b_{e}+1))\\
&=\left(\delta_{e,x_l}b_{(x_1,\dots,x_{l-1})}^*(P_{k-1})+b_{(x_1,\dots,x_l)}^*(P_{k-1})+b_{(x_1,\dots,x_l,e)}(P_{k-1})\right)+b_{(x_1,\dots,x_l)}^*(P_{k-1}),
\end{align*}
the additional term $b_{(x_1,\dots,x_l)}^*(P_{k-1})$ corresponding to the non-crossing partitions where $g_k=e$ is connected neither to another lower point, neither to an upper point. This case does not occur if $g_k\ne e$. The rest of the proof is similar to the other case.
\end{proof}

We get the following corollary:
\begin{lem}\label{cocom} For all $g_1,\dots,g_k\in\Gamma$, we have
\begin{align}\label{commute}
\#(1\in b_{g_{1}}\otimes\dots\otimes b_{g_{k}})=\#(1\in\omega({g_{1}})\otimes\dots\otimes\omega({g_{k}})).
\end{align}
\end{lem}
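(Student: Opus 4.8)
The plan is to read the desired equality as the assertion that two $\Z$-linear functionals on $\Z\langle\Gamma\rangle$ take the same value at one element, and then to prove that the two functionals in fact coincide. Put $L_1:=b_\emptyset^*$ for the coordinate form on $\Z\langle\Gamma\rangle$ and $L_2:=\#(1\in\Phi(\cdot))$. Both are $\Z$-linear: the first by definition, the second because $\Phi$ is a ring homomorphism and trivial multiplicity is a coordinate form on the fusion ring $R$. Since $\Phi(b_{g_1}\otimes\dots\otimes b_{g_k})=\omega(g_1)\otimes\dots\otimes\omega(g_k)$, the two sides of (\ref{commute}) are exactly $L_1$ and $L_2$ evaluated at $w:=b_{g_1}\otimes\dots\otimes b_{g_k}$, so it suffices to prove $L_1=L_2$.

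The key step is to check that $L_1$ and $L_2$ agree on every product $P_k=\prod_i(b_{g_i}+\delta_{g_i,e}1)$. Writing $c_g:=b_g+\delta_{g,e}1$, one has $\Phi(c_g)=\omega(g)+\delta_{g,e}1=a(g)$, hence $\Phi(P_k)=a(g_1)\otimes\dots\otimes a(g_k)$, and both values can be computed diagrammatically. First I would apply Lemma \ref{bigmap} with $l=0$ (so $x=\emptyset$) to get $L_1(P_k)=b_\emptyset^*(P_k)=\#NC'_\Gamma(\emptyset;(g_1,\dots,g_k))$; when the upper row is empty the defining restriction of $NC'_\Gamma$ (at most one upper and at least one lower point per block) is vacuous, so $NC'_\Gamma(\emptyset;\cdot)=NC_\Gamma(\emptyset;\cdot)$. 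On the other side, $L_2(P_k)=\#(1\in a(g_1)\otimes\dots\otimes a(g_k))=\dim\text{Hom}(1,a(g_1)\otimes\dots\otimes a(g_k))$, which by Theorem \ref{fusrules} together with the linear independence of the maps $T_p$ (Theorem \ref{Tpindep}(3)) equals $\#NC_\Gamma(\emptyset;(g_1,\dots,g_k))$. Thus $L_1(P_k)=L_2(P_k)$.

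It then remains to see that the $P_k$ span $\Z\langle\Gamma\rangle$. Since $(\Z\langle\Gamma\rangle,+,\otimes)$ is the free ring on $\{b_g\}$ by the preceding lemma, its monomials form a $\Z$-basis; the change of generators $b_g\rightsquigarrow c_g=b_g+\delta_{g,e}1$ modifies only $b_e$, and only by a degree-$0$ term, so it is unitriangular with respect to word length and hence invertible over $\Z$. Consequently the monomials $P_k=c_{g_1}\otimes\dots\otimes c_{g_k}$ (including the empty one, $b_\emptyset$) form another $\Z$-basis of $\Z\langle\Gamma\rangle$. Two linear functionals that agree on a basis agree everywhere, so $L_1=L_2$, and in particular $L_1(w)=L_2(w)$, which is (\ref{commute}).

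The one genuinely delicate point is the bridge between the ``$a$-level'' data, where the diagrammatic formula of Lemma \ref{bigmap} is available, and the ``$\omega$-level'' data appearing in the statement: the substitution $b_g\rightsquigarrow c_g$ (equivalently $\omega(g)\rightsquigarrow a(g)$) must be recognized as an invertible change of basis so that the agreement established for the $P_k$ transfers to the plain monomials. Should one prefer to avoid the basis argument, the same conclusion follows by expanding each $b_{g_i}=c_{g_i}-\delta_{g_i,e}1$ and each $\omega(g_i)=a(g_i)-\delta_{g_i,e}1$ by the identical inclusion--exclusion over the indices $i$ with $g_i=e$ and matching the resulting sums term by term via the key step applied to subwords; in either route it is the vanishing of the extra $NC'_\Gamma$-condition on an empty upper row, giving $NC'_\Gamma(\emptyset;\cdot)=NC_\Gamma(\emptyset;\cdot)$, that forces the two counts to coincide.
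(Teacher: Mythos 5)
Your proof is correct and follows essentially the same route as the paper: both reduce the claim to showing that $\#(1\in P_k)$ and $\#(1\in a(g_1)\otimes\dots\otimes a(g_k))$ each equal $\#NC_\Gamma(\emptyset;(g_1,\dots,g_k))$, the former via Lemma \ref{bigmap} with $x=\emptyset$ and the latter via Theorem \ref{fusrules} and the linear independence of the $T_p$. The only difference is presentational: you justify the passage between the $b/\omega$-level and the $c/a$-level by an explicit unitriangular change of basis (equivalently, inclusion--exclusion), where the paper simply asserts the equivalence ``by definition of $\omega(g)=a(g)\ominus\delta_{g,e}1$''.
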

\begin{proof}
Notice that it is equivalent to show that: $\forall g_{1},\dots,g_{k}\in\Gamma$ 
\begin{align*}
\#\left(1\in \left[(b_{g_{1}}+ \delta_{g_{1},e}1)\otimes\dots\otimes (b_{g_{k}}+ \delta_{g_{k},e}1)\right]\right)=\#\left(1\in a(g_{1})\otimes\dots\otimes a({g_{k}})\right)\
\end{align*}
by definitions of $\omega({g})=a(g)\ominus \delta_{g,e}1$.
By Theorem \ref{fusrules} and the fact that the linear maps $T_p$ are linearly independent (see Theorem \ref{Tpindep}, these maps being the same maps as in $NC$), we have $$\#\left(1\in a(g_{1})\otimes\dots\otimes a({g_{k}})\right)=\#NC_{\Gamma}(\emptyset;(g_{1},\dots,g_{k})).$$ Then (\ref{commute}) follows from Lemma \ref{bigmap} with $x=\emptyset$.
\end{proof}


We can now prove the theorem:

\begin{proof}[Proof of Theorem \ref{nonalter}]\

\underline{Step 1:}
We first prove that $\Phi$ is injective. Let $\alpha\in \Z\langle\Gamma\rangle$ in the domain of $\Phi$. We denote by $\alpha^*$ the conjugate of $\alpha$ in $\Z\langle\Gamma\rangle$ (given on $B$ by $\bar b_x=b_{\bar x}$). Then, we have by Lemma \ref{cocom}
\begin{align*}
\Phi(\alpha)=0&\implies \Phi(\alpha\otimes\alpha^*)=0\\
&\implies \#(1\in\Phi(\alpha\otimes\alpha^*)=0)\\
&\implies\#(1\in\alpha\otimes\alpha^*)=0\ \text{by (\ref{commute})}\\
&\implies \alpha=0.
\end{align*}
The last implication comes from the fact that $\alpha\to\#(1\in\alpha\otimes\alpha^*)$ is a non-degenerate quadratic form, since for all words $w_1,w_2\in\langle\Gamma\rangle$, $$\#(1\in b_{w_1}\otimes \overline{b_{w_2}})=\#(1\in b_{w_1}\otimes b_{\overline{w_2}})=\delta_{\overline{w_2},w_1}.$$

\underline{Step 2:} Now we prove that $\Phi(B)\subset \text{Irr}(H_N^+(\Gamma))$ by induction on the length of the words $x\in\langle\Gamma\rangle$. It is clear that for all letters $g\in\Gamma\setminus\{e\}$, $\Phi(g)\in \text{Irr}(H_N^+(\Gamma))$ by Corollary \ref{crlbasics}. 

Now for a word of length $k$, $x=g_{i_1},\dots, g_{i_k}$. We have by (\ref{fusrules1}):
\begin{equation}
b_{g_{i_1},\dots ,g_{i_k}}=b_{g_{i_1}}\otimes b_{g_{i_2},\dots ,g_{i_k}}-b_{g_{i_1}g_{i_2},g_{i_3},\dots,g_{i_k}}-\delta_{g_{i_1}g_{i_2},e}b_{g_{i_3},\dots,g_{i_k}}\in\Z\langle\Gamma\rangle, 
\end{equation}
 and applying $\Phi$, we get 
$$\omega({g_{i_1},\dots ,g_{i_k}})=\omega({g_{i_1}})\otimes \omega({g_{i_2},\dots ,g_{i_k}})-\omega({g_{i_1}g_{i_2},g_{i_3},\dots,g_{i_k}})-\delta_{g_{i_1}g_{i_2},e}\omega({g_{i_3},\dots,g_{i_k}})\in R.$$
We want to prove that it is an element of $\text{Irr}(H_N^+(\Gamma))$. We first prove that it is an element of $R^+$ and then that it is an irreducible corepresentation. To fulfill the first part, since $\Phi$ is injective, we only have to prove that 
$$\text{Hom}\left(\omega({g_{i_1}g_{i_2},g_{i_3},\dots,g_{i_k}});\omega({g_{i_1}})\otimes \omega({g_{i_2},\dots ,g_{i_k}})\right)$$  
and
$$\text{Hom}(\omega({g_{i_3},\dots,g_{i_k}}) ; \omega({g_{i_1}})\otimes \omega({g_{i_2},\dots ,g_{i_k}}))$$ are one-dimensional.
We have by the Frobenius reciprocity 
\begin{align*}
&\text{dim}\ \text{Hom}(\omega({g_{i_1}g_{i_2},g_{i_3},\dots,g_{i_k}});\omega({g_{i_1}})\otimes \omega({g_{i_2},\dots ,g_{i_k}}))\\
&=\text{dim}\ \text{Hom}(1,\overline{\omega(g_{i_1}g_{i_2},g_{i_3},\dots,g_{i_k})}\otimes \omega({g_{i_1}})\otimes \omega({g_{i_2},\dots ,g_{i_k}}))\\
&=\#(1\in\overline{\omega(g_{i_1}g_{i_2},g_{i_3},\dots,g_{i_k})}\otimes \omega({g_{i_1}})\otimes \omega({g_{i_2},\dots ,g_{i_k}}))\\
&=\#(1\in\overline{b_{g_{i_1}g_{i_2},g_{i_3},\dots,g_{i_k}}}\otimes b_{g_{i_1}}\otimes b_{g_{i_2},\dots, g_{i_k}})\\
&=\#\left(1\in b_{g_{i_k}^{-1},g_{i_{k-1}}^{-1},\dots,g_{i_2}^{-1}g_{i_1}^{-1}}\otimes b_{g_{i_1}}\otimes b_{g_{i_2},\dots, g_{i_k}}\right)=1.
\end{align*}
The last equality comes from the facts that 
\begin{enumerate}
\item[$\bullet$] $1\in b_y\otimes b_x\Leftrightarrow y=\overline{x}$ (and in this case $\#(1\in b_y\otimes b_x)=1$),
\item[$\bullet$] $b_{g_{i_1}}\otimes b_{g_{i_2},\dots, g_{i_k}}=\sum\lambda_x b_x$ with $|x|=k-1, \lambda_x\ne0\Leftrightarrow x=(g_{i_1}g_{i_2},\dots,g_{i_k}).$
\end{enumerate}

A similar computation shows that $\text{Hom}(\omega({g_{i_3},\dots,g_{i_k}}) ; \omega({g_{i_1}})\otimes \omega({g_{i_2},\dots ,g_{i_k}}))$ is also one-dimensional in the case $g_{i_1}g_{i_2}=e$.

Finally, we can prove by similar arguments that $$\text{dim}\ \text{Hom}(\omega(g_{i_1},\dots, g_{i_k});\omega(g_{i_1},\dots, g_{i_k}))=1$$ i.e. $\omega(g_{i_1},\dots, g_{i_k})$ is irreducible.

\underline{Step 3:} We prove that $\Phi: B\to \text{Irr}(H_N^+(\Gamma))$ is surjective. An induction over $k$ shows that for all $g_1,\dots,g_k\in\Gamma$, we have:
$$b_{g_1}\otimes\dots\otimes b_{g_k}=\sum_l\sum_{j_1,\dots,j_l}C_{j_1\dots j_l}b_{g_{j_1},\dots, g_{j_l}}$$ for some coefficients $C_{j_1,\dots,j_l}\in\N$ so that, applying $\Phi$, $$\omega(g_1)\otimes\dots\otimes\omega(g_k)=\sum_l\sum_{j_1,\dots,j_l}C_{j_1\dots j_l}\Phi(b_{g_{j_1},\dots,g_{j_k}}).$$
Then any tensor product between basic corepresentations $\omega(g), g\in\Gamma$ is in $\Phi(\N B)=\text{span}_{\N}\langle B\rangle$, and the surjectivity follows since the coefficients of such tensor products generate $C(H_N^+(\Gamma))$ so that $\Phi(B)\supset \text{Irr}(H_N^+(\Gamma))$.

\underline{Step 4:} We can now conclude. The description of the irreducible corepresentations of $H_N^+(\Gamma)$ and the fusion rules between them then follows from the isomorphism of semi-rings $\N\langle\Gamma\rangle\simeq R^+$. 

Moreover, we have $\overline{\omega}(x)=\omega(\bar x)$ for all $x\in\langle\Gamma\rangle$. Indeed, since $\omega(x)$ is irreducible, $1\subset \omega(x)\otimes\omega(\bar x)$ if and only if $\overline{\omega}( x)=\omega(\bar x)$. But the fusion rules in $H_N^+(\Gamma)$ imply that we indeed have $1\subset\omega(x)\otimes\omega(\bar x)$. The proof of the theorem is then complete.
\end{proof}

We can give an alternative formulation of the description of these fusion rules: let $a$ be the generator of the monoid $\N$ with respect to the operation $+$ and $(z_g)_{g\in\Gamma}$ be a family of abstract elements satisfying exactly all the relations of the group $\Gamma$. We put $M'=\N*_e\Gamma$, the free product identifying both neutral elements of $\mathbb{N}$ and $\Gamma$ with the empty word. Then $M'$ is the monoid generated by the element $a$ and the family $(z_g)_{g\in\Gamma}$ with: 
\begin{enumerate}
\item[$\bullet$] involution: $a^*=a, z_g^*=z_{g^-1}$,
\item[$\bullet$] (fusion) operation inductively defined by: 
\begin{equation}\label{fusrules2}
vaz_g\otimes z_haw=vaz_{gh}aw+\delta_{gh,e}(v\otimes w),
\end{equation} 
\item[$\bullet$] unit $z_e$.
\end{enumerate}
Any element of $M'$ can be written as a ``reduced" word in the letters $a$ and $z_g, g\in\Gamma$, $\alpha=a^{l_1}z_{g_1}a^{l_2}z_{g_2}\dots a^{l_k}$ with 
\begin{enumerate}
\item[$\bullet$]  $l_1, l_k \ge 0$ and $l_i \ge 1$ for all $1<i<k$,
\item[$\bullet$] $g_i\neq e$ for all $i$ if $k> 1$,
\item[$\bullet$] $\alpha=a^l$ in the case $k=1$ for some $l\ge0$ and $a^0$ is the empty word equal to $z_e$.
\end{enumerate}

We obtain the following reformulation of the previous Theorem \ref{nonalter}:
\begin{thm}\label{alter}
The irreducible corepresentations $r_{\alpha}$ of $H_N^+(\Gamma)$ can be indexed by the elements $\alpha$ of the submonoid $S:=\langle az_ga: g\in\Gamma\rangle\subset M'$ and with fusion rules given by (\ref{fusrules2}). Furthermore, the basic corepresentations $\omega(g), g\in\Gamma\setminus\{e\}$ correspond to the words $az_ga$, $\omega(e)$ to $a^2$ and the trivial one to $a^0=1$.
\end{thm}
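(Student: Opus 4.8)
The plan is to exhibit an explicit bijection between the indexing monoid $M=\langle\Gamma\rangle$ of Theorem \ref{nonalter} and the submonoid $S=\langle az_ga:g\in\Gamma\rangle$ of $M'$, and to check that it transports the fusion rules (\ref{fusrules1}) onto (\ref{fusrules2}). Since Theorem \ref{nonalter} already identifies the fusion semiring $R^+$ with $(\N\langle\Gamma\rangle,\oplus,\otimes)$, where $\otimes$ is given by (\ref{fusrules1}), it suffices to produce a semiring isomorphism $\iota$ from this structure onto $(\N S,\otimes)$, with $\otimes$ on $S$ given by (\ref{fusrules2}), carrying the basis $(b_x)$ to the natural basis of $\N S$. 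I define $\iota$ on basis words by $\iota(g_1,\dots,g_k)=\prod_{i=1}^k(az_{g_i}a)=az_{g_1}a^2z_{g_2}a^2\cdots a^2z_{g_k}a$, so that $\iota(\emptyset)=1=z_e$ and $\iota(g)=az_ga$ for a single letter (in particular $\iota(e)=a^2$). By construction the image of $\iota$ is exactly $S$.

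First I would verify that $\iota$ is a bijection onto $S$ using the normal form $a^{l_1}z_{g_1}\cdots a^{l_k}$ of $M'=\N*_e\Gamma$. Surjectivity is immediate from the definition of $S$. For injectivity one reads off the word $(g_1,\dots,g_k)$ from the normal form: each nontrivial letter produces a syllable $z_{g_i}$, a run of $m$ trivial letters between two consecutive nontrivial syllables produces the even exponent $a^{2(m+1)}$ (since $az_ea=a^2$), a run of $m$ trivial letters at the beginning or end produces the odd exponent $a^{2m+1}$, and a word of $k$ letters all equal to $e$ produces $a^{2k}$. Hence the exponents determine the number of intervening $e$'s in each gap and the $z$-letters determine the nontrivial entries, so the whole word is recovered and $\iota$ is injective. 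The involution is transported correctly since $a^*=a$ and $z_g^*=z_{g^{-1}}$ give $(\iota(g_1,\dots,g_k))^*=az_{g_k^{-1}}a^2\cdots a^2z_{g_1^{-1}}a=\iota(\overline{(g_1,\dots,g_k)})$, matching $\overline{b}_x=b_{\overline{x}}$.

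The heart of the proof is to check that $\iota$ intertwines (\ref{fusrules1}) and (\ref{fusrules2}), which I would do by induction on $|x|+|y|$; the base case $x=\emptyset$ or $y=\emptyset$ is trivial since $1$ is the unit of both structures. For $x=(g_1,\dots,g_k)$, $y=(h_1,\dots,h_l)$ with $k,l\ge1$, I split off the two leading terms at the junction. Writing $\iota(x)=Vaz_e$ and $\iota(y)=z_eaW$ with $V,W$ obtained by deleting the trailing, resp. leading, $a$, rule (\ref{fusrules2}) gives
\begin{equation*}
\iota(x)\otimes\iota(y)=Va^2W+(V\otimes W)=\iota(x,y)+(V\otimes W),
\end{equation*}
where $\iota(x,y)$ is the $m=0$ concatenation term of (\ref{fusrules1}). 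Next, writing $V=V'az_{g_k}$ and $W=z_{h_1}aW'$ with $V'=\iota(g_1,\dots,g_{k-1})$ and $W'=\iota(h_2,\dots,h_l)$, rule (\ref{fusrules2}) gives
\begin{equation*}
V\otimes W=V'az_{g_kh_1}aW'+\delta_{g_kh_1,e}(V'\otimes W')=\iota(x.y)+\delta_{g_kh_1,e}\,\bigl(\iota(g_1,\dots,g_{k-1})\otimes\iota(h_2,\dots,h_l)\bigr),
\end{equation*}
where $\iota(x.y)$ is the $m=0$ fusion term of (\ref{fusrules1}) (present exactly when $x,y\neq\emptyset$). The residual term survives only when $g_kh_1=e$, i.e. when $(g_k)$ and $(h_1)$ are mutually inverse, which is precisely the condition allowing one to match one more letter of the suffix/prefix $t$ in (\ref{fusrules1}). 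Since $\iota(g_1,\dots,g_{k-1})$ and $\iota(h_2,\dots,h_l)$ are again genuine $\iota$-images, the inductive hypothesis expands their tensor product as the (\ref{fusrules1})-sum for the shortened words, and a length-$m'$ match there corresponds bijectively to a length-$(m'+1)$ match for $(x,y)$ with the same $u,v$; collecting the three contributions reproduces exactly the sum (\ref{fusrules1}).

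I expect the main obstacle to be the combinatorial bookkeeping in this last step: correctly choosing the junction decompositions that isolate the concatenation and fusion terms, and verifying that the recursion depth of (\ref{fusrules2}) matches the matched length $|t|$ in (\ref{fusrules1}) without double counting. Double counting is avoided because the fusion term $\iota(x.y)$ has length $k+l-1$, strictly larger than every term produced by the recursion on the shorter words, so the three groups of terms are disjoint. Once the isomorphism is established, Theorem \ref{alter} follows: the basis of $S$ indexes $\text{Irr}(H_N^+(\Gamma))$ via $\iota$, the fusion rules are (\ref{fusrules2}), and the special values $\omega(g)=az_ga$ for $g\neq e$, $\omega(e)=a^2$, and the trivial corepresentation $=a^0=1$ are read directly from the definition of $\iota$.
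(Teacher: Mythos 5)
Your proposal is correct and follows essentially the same route as the paper: both identify $b_g\leftrightarrow az_ga$ (extended multiplicatively, so a word $(g_1,\dots,g_k)$ goes to $az_{g_1}a^2\cdots a^2z_{g_k}a$), expand the tensor product at the junction in two applications of (\ref{fusrules2}) to recover the three-term recursion concatenation $\oplus$ fusion $\oplus$ $\delta_{g_kh_1,e}(\text{shorter product})$, and observe that this recursion is exactly the one encoded by (\ref{fusrules1}). Your write-up is somewhat more explicit than the paper's about the injectivity of the identification (via normal forms in $M'$) and about the induction matching length-$m'$ overlaps of the shortened words to length-$(m'+1)$ overlaps of the originals, but the underlying argument is the same.
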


\begin{proof}
We first use the identification proved in the previous theorem: $\omega(g_{1},\dots, g_{k})\mapsto b_{g_{1}}\dots b_{g_{k}}$. Then, for any words $x,y\in\langle\Gamma\rangle$ and letters $g,h\in\Gamma$:
\begin{align*}
\omega(x,g)\otimes\omega(h,y)&=\underset{h,y=\overline{t},v}{\sum_{x,g=u,t}}\omega(u,v)\oplus\omega(u.v)\\
&=\omega(x,g,h,y)\oplus\omega(x,gh,y)\oplus\delta_{gh,e}\omega(x)\otimes\omega(y)
\end{align*}
Then with the identification mentioned above, we obtain this new (recursive) formulation for the fusion rules:
\begin{equation}\label{fusrules3}
pb_g\otimes b_hq=pb_gb_hq\oplus pb_{gh}q\oplus\delta_{gh,e}p\otimes q
\end{equation}
(with the identifications $\omega(x)\equiv p, \omega(y)\equiv q$). Now, we consider the submonoid $S\subset M'$ generated by elements $az_ga: g\in\Gamma$. It is a free monoid indexed by $\Gamma$ hence it is isomorphic to $\langle\Gamma\rangle$ (the monoid of the words over $\Gamma$ introduced in the previous theorem) via $b_g\equiv az_g a$. We have $(az_ga)^*=a{z_{g^{-1}}}a\in S$ so this identification is compatible with the involutions. 

Now let $p,q\in S$. We prove that with the fusion rules (\ref{fusrules2}) we can get back to (\ref{fusrules3}), and then the identification will preserve the fusion rules. It comes as follows:
\begin{align*}
pb_g\otimes b_hq &\equiv paz_ga\otimes az_haq\\ 
&=paz_ga^2z_haq\oplus paz_g\otimes z_haq\\
&\equiv pb_gb_hq\oplus pb_{gh}q\oplus \delta_{gh,e}p\otimes q.
\end{align*}

\end{proof}

\subsection{Dimension formula}
In this section we obtain the same dimension formula as in the case $\Gamma=\Z_s$ see \cite[Theorem 9.3]{BV09} and \cite[Corollary 2.2]{Lem13}. In this subsection $\Gamma$ is any discrete group, $N\ge2$.

Let us first fix some notation. Recall that there is a morphism $\pi : C(H_N^+(\Gamma))\to C(S_N^+)$ and that it corresponds to a functor $\pi : \text{Irr}(H_N^+(\Gamma))\to \text{Irr}(S_N^+)$, sending any $a(g), g\in\Gamma$ to the fundamental corepresentation $v$ of $S_N^+$. 

With the notation of Theorem \ref{nonalter} and Theorem \ref{alter} above, recall that if $r_{\alpha}\in \text{Irr}(H_N^+(\Gamma))$, we denote by $\chi_{\alpha}=(\text{id}\otimes \text{Tr})(r_{\alpha})$ the associated character. 

It is proved in \cite[Proposition 4.8]{Bra12} that the central algebra $C(S_N^+)_0=C^*-\langle\chi_k : k\in\N\rangle$ is isomorphic with $C([0,N])$ via $\chi_k\mapsto  A_{2k}(\sqrt{X})$ where $(A_k)_{k\in\N}$ is the family of dilated Tchebyshev polynomials defined inductively by $A_0=1, A_1=X$ and $A_1A_k=A_{k+1}+A_{k-1}$. 

We now give the following proposition whose proof can be found in \cite{Lem13} since the fusion rules between the irreducible corepresentations of $H_N^+(\Gamma)$ are similar for all groups $\Gamma$ (and $N\ge4$):

\begin{prp}(\/\cite[Proposition 2.1]{Lem13}\/)
Let $\chi_{\alpha}$ be the character of an irreducible corepresentation $r_{\alpha}\in \emph{Irr}(H_N^+(\Gamma))$. Write $\alpha=a^{l_1}z_{g_1}\dots a^{l_k}$. Then, identifying $C(S_N^+)_0$ with $C([0,N])$, the image of $\chi_{\alpha}$ by $\pi$, say $P_{\alpha}$, satisfies $$P_{\alpha}(X^2)=\pi(\chi_{\alpha})(X^2)=\prod_{i=1}^kA_{l_i}(X).$$
\end{prp}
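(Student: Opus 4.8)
The plan is to recast the character computation as a single ring homomorphism and then to reduce the whole statement to one identity among the dilated Tchebyshev polynomials. First I would observe that, since $\pi\colon C(H_N^+(\Gamma))\to C(S_N^+)$ is a morphism of Woronowicz-$C^*$-algebras, the assignment $r\mapsto\pi(\chi_r)$ sends the character of a tensor product to the product of characters and the character of a direct sum to the sum of characters; moreover $\pi(\chi_r)$ is the character of the image corepresentation, hence lies in the central subalgebra $C(S_N^+)_0$. Composing with the isomorphism $C(S_N^+)_0\cong C([0,N])$, $\chi_k\mapsto A_{2k}(\sqrt{X})$, and writing everything in the variable $X$, I obtain a unital ring homomorphism $\psi\colon R\to\mathbb{C}[X]$ on the fusion ring of $H_N^+(\Gamma)$ with $\psi(r_\alpha)(X)=\pi(\chi_\alpha)(X^2)$ and $\psi(v^{(k)})=A_{2k}$. (Note that $\mathbb{C}[X]$ is commutative, so this is a genuine ring homomorphism even though $R$ need not be commutative.)

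Next I would compute $\psi$ on the basic corepresentations. Since $\pi(a(g))=v=v^{(0)}\oplus v^{(1)}$ for every $g$ and $\omega(g)=a(g)\ominus\delta_{g,e}1$, one gets $\psi(\omega(g))=A_0+A_2=X^2$ when $g\neq e$ and $\psi(\omega(e))=A_2=X^2-1$. By Theorem~\ref{nonalter} the fusion ring $R$ is isomorphic to $\Z\langle\Gamma\rangle$, which is a free ring for $\otimes$ on the generators $b_g\equiv\omega(g)$; hence $\psi$ is completely determined by the two values above, and computing $\psi(r_\alpha)=\psi(\omega(x))$ for $x=(g_1,\dots,g_m)$ becomes a purely algebraic recursion in $\mathbb{C}[X]$.

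To run the recursion I would apply $\psi$ to the identity~(\ref{calfus}), which yields
\begin{equation*}
\psi(\omega(x))=\psi(\omega(x'))\,\psi(\omega(g_m))-\psi(\omega(x''))-\delta_{g_{m-1}g_m,e}\,\psi(\omega(x''')),
\end{equation*}
with $x'=(g_1,\dots,g_{m-1})$, $x''=(g_1,\dots,g_{m-1}g_m)$ and $x'''=(g_1,\dots,g_{m-2})$, and then prove $\psi(\omega(x))=\prod_{i=1}^k A_{l_i}(X)$ by induction on $m$. When the last letter is appended the block lengths $(l_1,\dots,l_k)$ of the reduced word $\alpha=a^{l_1}z_{g_1}\cdots a^{l_k}$ change in a controlled way: a trivial letter $g_m=e$ lengthens the final block by $2$, whereas a nontrivial $g_m$ lengthens it by $1$ and opens a new final block $a^1$. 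The identities that make the induction close are the Tchebyshev relations $A_l(X^2-2)=A_{l+2}+A_{l-2}$ (with conventions $A_{-1}=0$, $A_0=1$), equivalent to $A_1A_k=A_{k+1}+A_{k-1}$; these simultaneously govern the growth of the Tchebyshev factor inside a single block and the emergence of the new factor $A_1=X$ at a nontrivial separator.

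The main obstacle is precisely this bookkeeping: one must follow how the block decomposition transforms under~(\ref{calfus}) in each of the cases according to whether $g_m$ and $g_{m-1}$ are trivial, and verify that the recursion reproduces exactly one application of $A_l(X^2-2)=A_{l+2}+A_{l-2}$ on the affected block while leaving the factors attached to the other blocks untouched. Because the fusion rules of $H_N^+(\Gamma)$ and the morphism $\pi$ are formally identical to those in the case $\Gamma=\Z_s$, this is verbatim the computation carried out in \cite[Proposition~2.1]{Lem13}, which is why the statement may simply be quoted.
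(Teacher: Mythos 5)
Your proposal is correct and follows essentially the same route as the paper, which gives no independent argument but simply invokes \cite[Proposition 2.1]{Lem13} on the grounds that the fusion rules of $H_N^+(\Gamma)$ are formally identical to those of the $\Gamma=\Z_s$ case; your reconstruction (multiplicativity of $r\mapsto\pi(\chi_r)$ on the fusion ring, the values $X^2$ and $X^2-1$ on the generators $\omega(g)$, and the induction via (\ref{calfus}) closed by the Tchebyshev relation $A_1A_k=A_{k+1}+A_{k-1}$) is exactly the computation being cited. The bookkeeping you flag does close in every case, so nothing is missing.
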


\begin{crl}(\/\cite[Corollary 2.2]{Lem13}\/)
Let $r_{\alpha}$ be an irreducible corepresentation of $H_N^+(\Gamma)$ with $\alpha=a^{l_1}z_{g_1}\dots a^{l_k}$. Then $$\emph{dim}(r_{\alpha})=\prod_{i=1}^kA_{l_i}(\sqrt{N}).$$
\end{crl}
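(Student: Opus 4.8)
The plan is to deduce the dimension formula directly from the preceding Proposition by evaluating the central element $P_\alpha=\pi(\chi_\alpha)$ at a single point. The starting observation is that for any finite-dimensional corepresentation $u=(u_{ij})$ one has $\dim(u)=\epsilon(\chi_u)$, since $\epsilon(\chi_u)=\sum_i\epsilon(u_{ii})=\sum_i 1=\dim u$ using $\epsilon(u_{ij})=\delta_{ij}$. I would then invoke that $\pi:C(H_N^+(\Gamma))\to C(S_N^+)$ is a morphism of Woronowicz-$C^*$-algebras, hence intertwines the counits: $\epsilon_{S_N^+}\circ\pi=\epsilon_{H_N^+(\Gamma)}$. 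Consequently
$$\dim(r_\alpha)=\epsilon_{H_N^+(\Gamma)}(\chi_\alpha)=\epsilon_{S_N^+}(\pi(\chi_\alpha))=\epsilon_{S_N^+}(P_\alpha),$$
which reduces the whole statement to evaluating the counit of $S_N^+$ on $P_\alpha\in C(S_N^+)_0$.

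Next I would identify this counit, restricted to $C(S_N^+)_0\cong C([0,N])$, with evaluation at a single point. Since $\epsilon_{S_N^+}$ is a $*$-character of the commutative algebra $C(S_N^+)_0$, by Gelfand duality it corresponds to point evaluation at some $t\in[0,N]$. To pin down $t$, I would use the given identification $\chi_k\mapsto A_{2k}(\sqrt{X})$ at $k=1$: this sends $\chi_1$ to the function $A_2(\sqrt{X})=X-1$, while on the other hand $\epsilon_{S_N^+}(\chi_1)=\dim(v^{(1)})=N-1$ (from $v\simeq v^{(0)}\oplus v^{(1)}$ and $\dim v=N$). Solving $t-1=N-1$ forces $t=N$, so the counit is evaluation at the endpoint $X=N$.

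Finally I would combine these facts. Under the identification $C(S_N^+)_0\cong C([0,N])$ the element $P_\alpha$ is the function $Y\mapsto P_\alpha(Y)$ on $[0,N]$, and the Proposition states $P_\alpha(X^2)=\prod_{i=1}^kA_{l_i}(X)$. Evaluating the counit therefore amounts to plugging in $Y=N=(\sqrt{N})^2$, giving
$$\dim(r_\alpha)=\epsilon_{S_N^+}(P_\alpha)=P_\alpha(N)=P_\alpha\big((\sqrt{N})^2\big)=\prod_{i=1}^kA_{l_i}(\sqrt{N}),$$
as claimed.

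The computation is short once the Proposition is available, so there is no serious obstacle; the one point that genuinely requires care is the identification of the counit with evaluation at the \emph{endpoint} $X=N$ rather than at some interior point, which I justify via the explicit value $\dim(v^{(1)})=N-1$ above. All the substantive work is carried by the cited Proposition, whose proof rests on the fact that the fusion rules of $H_N^+(\Gamma)$ coincide with those in the case $\Gamma=\Z_s$.
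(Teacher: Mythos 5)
Your argument is correct and is the standard one: the paper itself gives no proof here, deferring to \cite[Corollary 2.2]{Lem13}, and that reference derives the formula exactly as you do, from $\dim(r_\alpha)=\epsilon(\chi_\alpha)$, the compatibility $\epsilon_{S_N^+}\circ\pi=\epsilon_{H_N^+(\Gamma)}$, and the fact that the counit restricted to $C(S_N^+)_0\cong C([0,N])$ is evaluation at $X=N$. Your identification of the evaluation point via $\epsilon(\chi_v)=N$ (equivalently $\epsilon(\chi_1)=N-1=A_2(\sqrt{N})$) is the right way to pin this down, so there is nothing to add.
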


\section{Properties of the reduced operator algebra $C_r(H_N^+(\Gamma))$}

\subsection{Simplicity and uniqueness of the trace of $C_r(H_N^+(\Gamma))$}\label{simplicit}

In this subsection, we will assume $N\ge8$ and $|\Gamma|\ge2$. 
The case $|\Gamma|=1$ corresponds to $S_N^+$. The simplicity result proved in this subsection is already known (and we will use it) in the case of $S_N^+$ (see \cite{Bra12}). The statement of Theorem \ref{simpp} is then also true in the case $|\Gamma|=1$, but we emphasize that we only prove it in the case $|\Gamma|\ge2$ and that we use the case $|\Gamma|=1$.

The assumption on $N\ge8$, is due to the fact that the simplicity of $C_r(S_N^+)$ is only known in the cases $N\ge8$. For $N=2$, we know (see \cite{Bic04}) that $C_r(H_2^+(\Gamma))\simeq C_r^*(\Gamma*\Gamma)\otimes C(\Z_2)$ which is not simple. To summarize, the cases $3\le N\le7$ remain open. 

We denote by $||.||_r$ the norm on $C_r(H_N^+(\Gamma))$.

Let us fix some notation. We use the description of the irreducible corepresentations indexed by the monoid $M$ (see Theorem \ref{nonalter}) but we will simplify the notation $\omega(g_1,\dots,g_k)$ into $(g_1,\dots,g_k)$ and will denote the empty word by $1$ since it indexes the trivial representation. If $\alpha\in M$, we will denote the associated irreducible corepresentation by $r_{\alpha}$. We will denote by $|\alpha|=|(g_1,\dots,g_k)|$ the length $k\in\N$ of $\alpha\in M$. If $A\subset M$, we denote by $\overline{A}$ the set of conjugates $\bar\alpha$ of the elements $\alpha\in A$.

We will use the following notation as in \cite{MR1484551}. If $A,B\subset M$, we set $$A\circ B:=\{\gamma : \exists(\alpha,\beta)\in A\times B \text{ such that } r_{\gamma}\subset r_{\alpha}\otimes r_{\beta}\}\subset M.$$ 


We denote by $(g,\dots)\in M$ an element starting by $g\in\Gamma$ and $(\dots,g)$ an element ending by $g$. 


\begin{nota}\label{ensbles} We will denote $e$ the neutral element in $\Gamma$ and:
\begin{enumerate}
\item[$\bullet$] $e^k$ the word $(e,\dots,e)\in M$, with the convention $e^0=1$ and more generally $(g_1,\dots,g_i,e^k,g_{i+1},\dots,g_n)=(g_1,\dots,g_i,e,\dots,e,g_{i+1},\dots,g_n)$.
\item[$\bullet$] $E_1:=\bigcup\{(e,\dots)\}\cup\{1\}$ the subset of the words starting by $e$.
\item[$\bullet$] $E_2:=\bigcup_{k\in\N}\{e^k\}$ the subset of words with letters all equal to $e$ ($E_2\subset E_1$, note that we assume $0\in\N$).
\item[$\bullet$] $G_1:=\bigcup_{\underset{g\in\Gamma}{g\ne e}}\{(g,\dots)\}$ the subset of the words starting by any $g\ne e$ (notice that $M=E_1\sqcup G_1$).
\item[$\bullet$] $G_2:=\bigcup_{g,g'\ne e}\{(g,\dots,g')\}\subset G_1$.
\item[$\bullet$] $S:= {^cE_2}$ is the complement of $E_2$ in $M$.
\item[$\bullet$] $E_3:= S\cap E_1=E_1\setminus E_2$ (notice that $S=E_3\sqcup G_1$).
\end{enumerate}
\end{nota}

The definition of $ E_2$ and the fusion rules in Theorem \ref{nonalter}, clearly show that $1\in E_2$, $\overline{ E_2}= E_2$ and $ E_2\otimes E_2\subset E_2$. Let $\mathscr{C}'$ be the closure in $C_r(H_N^+(\Gamma))$ of the subspace $\mathscr{C}$ generated by the coefficients of the corepresentations $r_{\alpha}, \alpha\in E_2$. We know by \cite[Lemma 2.1, Proposition 2.2]{VerK}, that the Haar state $h_{\mathscr{C}'}$ is the restriction of the Haar state $h\in C_r(H_N^+(\Gamma))^*$ and that there exists a unique conditional expectation $P : C_r(H_N^+(\Gamma))\twoheadrightarrow \mathscr{C}'$ given by $h=h_{\mathscr{C}'}\circ P$. 

We recall from \cite{VerK}, that $P$ is defined by the compression by the orthogonal projection $p$ onto the closure of $\mathscr{C}'$ in $L^2(H_N^+(\Gamma))$: $P: C_r(H_N^+(\Gamma))\to p\mathscr{C}'p\simeq \mathscr{C}'$. 

We will denote by $\mathscr{S}'$ the closure of $\mathscr{S}:=\text{span}\{x\in \Pol(H_N^+(\Gamma)) : \supp(x)\subset S\}$ in $C_r(H_N^+(\Gamma))$. We have the algebraic decomposition as direct sum of vector subspaces $C_r(H_N^+(\Gamma))={\mathscr{C}'\oplus \mathscr{S}}'$, $P|_{\mathscr{C}'}=\text{id}$ and $\ker(P)=\mathscr{S}'$.

We are going to prove that $\mathscr{C}'$ can be identified with $C_r(S_N^+)$, using the simplicity of $C_r(S_N^+)$ (when $N\ge8$, see \cite{Bra12}) and adapt the ``modified Powers method" in \cite{MR1484551} where Banica proves the simplicity of $C_r(U_N^+)$.

\begin{prp}
$\mathscr{C}'\simeq C_r(S_N^+)$.
\end{prp}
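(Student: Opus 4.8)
The plan is to exhibit an explicit isomorphism realizing $\mathscr{C}'\simeq C_r(S_N^+)$ coming from a magic unitary hidden inside $C(H_N^+(\Gamma))$, and then to check that it is compatible with the Haar states. First I would observe that the $N^2$ elements $a_{ij}(e)$ form a magic unitary. Indeed, relation (\ref{tt3}) gives $a_{ij}(e)^*=a_{ij}(e)$; relation (\ref{tt1}) with $g=h=e$ gives $a_{ij}(e)a_{ik}(e)=\delta_{jk}a_{ij}(e)$ and $a_{ji}(e)a_{ki}(e)=\delta_{jk}a_{ji}(e)$, so each $a_{ij}(e)$ is a self-adjoint idempotent (a projection) and the rows and columns are orthogonal; and relation (\ref{tt2}) gives $\sum_l a_{il}(e)=\sum_l a_{li}(e)=1$. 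By the universal property of $C(S_N^+)$ (Definition \ref{permw}) there is therefore a unital $*$-homomorphism $\iota\colon C(S_N^+)\to C(H_N^+(\Gamma))$ determined by $v_{ij}\mapsto a_{ij}(e)$. Comparing $\Delta(a_{ij}(e))=\sum_k a_{ik}(e)\otimes a_{kj}(e)$ with $\Delta(v_{ij})=\sum_k v_{ik}\otimes v_{kj}$ shows that $\iota$ intertwines the coproducts, hence restricts to a Hopf $*$-algebra morphism $\iota\colon \Pol(S_N^+)\to\Pol(H_N^+(\Gamma))$. Finally, the canonical morphism $\pi\colon C(H_N^+(\Gamma))\to C(S_N^+)$, $a_{ij}(g)\mapsto v_{ij}$ (see Proposition \ref{arrows}), satisfies $\pi\circ\iota=\id$, so in particular $\iota$ is injective.

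Next I would identify the image of $\iota$ with $\mathscr{C}$. The image $\iota(\Pol(S_N^+))$ is the $*$-subalgebra generated by the $a_{ij}(e)$, that is, the linear span of the coefficients of all tensor powers $a(e)^{\otimes k}$. Since $a(e)=1\oplus\omega(e)$ (Corollary \ref{crlbasics}) and the set $E_2$ of words on the single letter $e$ is stable under tensor products and conjugation and contains the trivial corepresentation, these tensor powers decompose precisely into the $r_\alpha$, $\alpha\in E_2$, and conversely each $r_{e^k}$ occurs in $a(e)^{\otimes k}$. Hence $\iota(\Pol(S_N^+))$ coincides with the Hopf $*$-subalgebra spanned by the coefficients of the $r_\alpha$, $\alpha\in E_2$, namely the dense subalgebra $\mathscr{C}$. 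As a consistency check one verifies that the fusion rules of Theorem \ref{nonalter}, restricted to $E_2$, reproduce those of $S_N^+$ under the dimension-preserving matching $\omega(e^k)\leftrightarrow v^{(k)}$. Thus $\iota$ is a Hopf $*$-algebra isomorphism onto the dense Hopf $*$-subalgebra underlying $\mathscr{C}'$.

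It remains to pass to the reduced $C^*$-algebras, which I expect to be the only delicate point. By \cite{VerK} the algebra $\mathscr{C}'$ is itself a reduced Woronowicz-$C^*$-algebra whose Haar state is the restriction $h|_{\mathscr{C}'}$ of the Haar state $h$ of $H_N^+(\Gamma)$. The composition $h\circ\iota$ is then a bi-invariant state on $\Pol(S_N^+)$, bi-invariance being inherited through the Hopf morphism $\iota$ (applying $\id\otimes h$ to $\Delta(\iota(x))=(\iota\otimes\iota)\Delta(x)$ and using the injectivity of $\iota$). By uniqueness of the Haar state it therefore coincides with $h_{S_N^+}$. Consequently $\iota$ preserves the $L^2$-inner products and intertwines the two left regular representations, so it extends to a $*$-isomorphism of the GNS completions, that is to an isomorphism $C_r(S_N^+)\xrightarrow{\ \sim\ }\mathscr{C}'$. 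The main obstacle is exactly this last compatibility: one must know that the GNS construction for $h$ restricts correctly to the subalgebra, so that passing from $C(S_N^+)$ to $C_r(S_N^+)$ and from $C(H_N^+(\Gamma))$ to $\mathscr{C}'$ create the same kernel, which is precisely what the results quoted from \cite{VerK} guarantee.
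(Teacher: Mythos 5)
Your proposal is correct and follows essentially the same route as the paper: both arguments produce the pair of morphisms $v_{ij}\mapsto a_{ij}(e)$ and $a_{ij}(g)\mapsto v_{ij}$ whose composition is the identity on $C(S_N^+)$, identify the $*$-algebra generated by the $a_{ij}(e)$ with the span $\mathscr{C}$ of coefficients of the $r_\alpha$, $\alpha\in E_2$ (the paper via the induction $e^k=e^{k-1}\otimes e^1\ominus e^{k-1}$, you via the decomposition of $a(e)^{\otimes k}$ — the same computation read in the other direction), and then pass to the reduced level using the compatibility of the Haar states guaranteed by \cite{VerK}. No gaps.
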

\begin{proof}
We first notice that at the level of the universal $C^*$-algebras, we have $C(S_N^+)\simeq \overline{\mathscr{C}}^{||.||}$ where the closure is taken in $C(H_N^+(\Gamma))$. Indeed, with the notation of the first section (see Definition \ref{permw}, Example \ref{exefond}), we can construct by universal properties the following morphisms :
$$C(H_N^+(\Gamma))\overset{\pi_1}{\longrightarrow} C(S_N^+)\overset{\pi_2}{\longrightarrow} C(H_N^+(\Gamma)), \ a_{ij}(g)\mapsto v_{ij}\mapsto a_{ij}(e).$$

Now notice that $\forall x\in C(S_N^+)$, $\pi_1\circ\pi_2(x)=x$ and thus $C(S_N^+)$ is isomorphic with the (unital) sub-$C^*$-algebra of $C(H_N^+(\Gamma))$ generated by the elements $a_{ij}(e)$ i.e.
$$C(S_N^+)\simeq C^*-\langle x\in \Pol(H_N^+(\Gamma)) : \supp(x)\subset\{1,a(e)\}\rangle\subset C(H_N^+(\Gamma)).$$
But $\mathscr{A}:=*_{\text{alg}}-\langle x\in \Pol(H_N^+(\Gamma)) : \supp(x)\subset\{1,a(e)\}\rangle=\mathscr{C}$. Indeed : $a(e)=1\oplus r_{(e)}=1\oplus r_{e^1}$, thus the inclusion $\mathscr{A}\subset \mathscr{C}$ is clear. On the other hand, the coefficients of $1=r_{e^0}$ and $r_{e^1}=a(e)\ominus1$ are in $\mathscr{A}$. The inclusion $\mathscr{C}\subset \mathscr{A}$ then follows by induction since for all $k\in\N^*$, we have $$e^k=e^{k-1}\otimes e^1\ominus e^{k-1}.$$

Thus we obtain $\overline{\mathscr{C}}^{||.||}\simeq C(S_N^+)$ and then an isomorphism at the level of the reduced $C^*$-algebras (see e.g. \cite{VerK}) $\mathscr{C}'\simeq C_r(S_N^+).$
\end{proof}

Notice that we also proved $\mathscr{C}\simeq \Pol(S_N^+)$.

From now on all the closures are taken in the reduced $C^*$-algebra $C_r(H_N^+(\Gamma))$.
\medskip

Let $J \lhd C_r(H_N^+(\Gamma))$ be an ideal and let us prove that $J$ is either $\{0\}$ or $C_r(H_N^+(\Gamma))$. It is clear that $P(J)$ is an ideal in $\mathscr{C}'$. 
Hence, the simplicity of $\mathscr{C}'\simeq C_r(S_N^+)$ implies that $P(J)=\{0\}$ or $P(J)=\mathscr{C}'$.

Let us first assume that $P(J)=\{0\}$. Then since $\ker(P)=\mathscr{S}'$ we have $J\subset \mathscr{S}'$. 

But this is possible only if $J=\{0\}$. Indeed, since $1\notin S$ we have $\mathscr{S}'\subset \ker(h)$. But now for any $x\in J$, we have $x^*x\in J\subset \mathscr{S}'\subset \ker(h)$ and then $h(x^*x)=0$. Hence, $x=0$ since $h$ is faithful on $C_r(H_N^+(\Gamma))$. We then have $P(J)=\{0\}\Rightarrow J=\{0\}$.




In the sequel, we assume that $P(J)=\mathscr{C}'$ and we prove that $J=C_r(H_N^+(\Gamma))$. 

Since $P(J)=\mathscr{C}'\ni 1$, there exists $x\in J$ such that $x=1-z$ with $z\in \mathscr{S}'$. We write $z=z_0+(z-z_0)$ with $z_0\in \mathscr{S}$ and $||z-z_0||_r<1/2$. Notice that we can assume that $z$ and $z_0$ are hermitians, by taking the the real parts of $z$ and $z_0$ if necessary.

We are going to prove that we can find a finite family $(\beta_i)\subset \Pol(H_N^+(\Gamma))$ such that $C_r(H_N^+(\Gamma))\ni w\mapsto\sum_i\beta_iw\beta_i^*$ is unital and completely positive and $||\sum_{i}\beta_iz_0\beta_i^*||_r<1/2.$ We will then get 

\begin{align*}
||1-\sum_i\beta_ix\beta_i^*||_r&=||\sum_{i}\beta_i(1-x)\beta_i^*||_r=||\sum_i\beta_iz\beta_i^*||_r\\
&\le||\sum_i\beta_iz_0\beta_i^*||_r+||\sum_i\beta_i(z-z_0)\beta_i^*||_r\\
&\le||\sum_i\beta_iz_0\beta_i^*||_r+||z-z_0||_r<1,
\end{align*}
since any unital and completely positive map is contractive. Thus $\sum_i\beta_ix\beta_i^*\in J$ will be invertible and then we will get $J=C_r(H_N^+(\Gamma))$.

Let $g_0\in\Gamma\setminus\{e\}$ be an arbitrary chosen element different from $e$ (recall that $|\Gamma|\ge2$).
With the Notation \ref{ensbles}, we have the following proposition:
\begin{prp}\label{decomm}
Let $\alpha_1:=(g_0,e)$, $\alpha_3:=(g_0,e^3)$, $\alpha_5:=(g_0,e^5)$ in $S$. Let $G\subset S$ finite. Then:
\begin{enumerate}
\item\label{decomp} $S=E_3\sqcup G_1, G_2\circ E_1\cap E_1=\emptyset, \{\alpha_t\}\circ G_1\cap \{\alpha_s\}\circ G_1=\emptyset,\forall t\ne s$ in $\{1,3,5\}$,
\item\label{gs} $\bigcup_{t\in\{1,3,5\}}\{\alpha_t\}\circ G_2\circ \{\overline{\alpha_t}\}\subset G_2$,
\item\label{perm} $\exists\alpha\in S$ s.t. $\left\{\alpha\right\}\circ G\circ \left\{\overline{\alpha}\right\}\subset G_2$.
\end{enumerate}
\end{prp}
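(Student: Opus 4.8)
The plan is to reduce all three assertions to a single combinatorial analysis of the first and last letters of the irreducible components appearing in a tensor product $r_\alpha\otimes r_\beta$, read off from the fusion rules of Theorem \ref{nonalter}. Recall that a component of $r_\alpha\otimes r_\beta$ arises by choosing a splitting $\alpha=(u,t)$, $\beta=(\overline{t},v)$ of matching length (so that a suffix of $\alpha$ is cancelled against the conjugate prefix of $\beta$) and then either concatenating, giving $\omega(u,v)$, or, when $u$ and $v$ are both non-empty, fusing, giving $\omega(u.v)$. The key elementary observation I would record first is a ``cancellation lemma'': since $\alpha_t=(g_0,e^t)$ ends in the block $e^t$ and begins with $g_0\neq e$, a suffix of $\alpha_t$ of length $1\le j\le t$ equals $e^j$, whose conjugate is again $e^j$; hence $\alpha_t$ can only cancel against a word $\beta$ that starts with $j$ copies of $e$, and the whole of $\alpha_t$ (length $t+1$) can cancel only against a word starting with $(e^t,g_0^{-1})$.

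For Parts (1) and (2) I would use that every word in $G_1$ (hence in $G_2$) starts with a letter $\neq e$. By the cancellation lemma no letter of $\alpha_t$ can be cancelled against such a $\beta$, so $j=0$ and the only components of $r_{\alpha_t}\otimes r_\beta$ are the concatenation $(g_0,e^t,\beta)$ and the fusion $(g_0,e^{t-1},\beta)$ (here $|u|=t+1\ge 2$, so the leading $g_0$ survives the fusion). Thus every element of $\{\alpha_t\}\circ G_1$ begins with $g_0$ followed by exactly $m$ copies of $e$ and then a non-$e$ letter, with $m\in\{t-1,t\}$; since for $t\in\{1,3,5\}$ the intervals $\{0,1\},\{2,3\},\{4,5\}$ are pairwise disjoint and $m$ is determined by the word, the sets $\{\alpha_t\}\circ G_1$ are pairwise disjoint, giving the last clause of (1). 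The identity $S=E_3\sqcup G_1$ is immediate from Notation \ref{ensbles}, and $G_2\circ E_1\cap E_1=\emptyset$ follows by the mirror remark: an $\alpha\in G_2$ ends with a non-$e$ letter, so no cancellation is possible against $\beta\in E_1$ (which starts with $e$ or is empty), whence every component starts with the first, non-$e$, letter of $\alpha$ and so lies in $G_1=M\setminus E_1$. For (2) I would run the same computation on both sides: tensoring on the right by $\overline{\alpha_t}=(e^t,g_0^{-1})$ and using that $\beta\in G_2$ ends with a non-$e$ letter (so again no cancellation on the right), every component of $r_{\alpha_t}\otimes r_\beta\otimes r_{\overline{\alpha_t}}$ starts with $g_0$ and ends with $g_0^{-1}$, hence lies in $G_2$.

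For Part (3), which is the heart of the matter, I would take $\alpha=(g_0,e^T)$ with $T$ strictly larger than $\max\{|\beta|:\beta\in G\}$ (possible since $G$ is finite), so that $\alpha\in S$. Fix $\beta\in G$ and let $p_0$, $p_1$ be the numbers of leading, respectively trailing, copies of $e$ in $\beta$; since $\beta\in S$ is not a power of $e$ we have $p_0,p_1<|\beta|<T$. The cancellation lemma forces any cancellation of $\alpha$ against $\beta$ to consume at most the $p_0<T$ leading $e$'s of $\beta$, so it never reaches $g_0$; consequently $|u|\ge 2$ in every term and each component of $r_\alpha\otimes r_\beta$ begins with $g_0$ while retaining the trailing block of $\beta$ unchanged (it still ends in $p_1$ copies of $e$ preceded by a non-$e$ letter). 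Running the mirror analysis for the right tensor factor $\overline{\alpha}=(e^T,g_0^{-1})$, cancellation consumes only the $p_1<T$ trailing $e$'s and never reaches $g_0^{-1}$, so every component of $r_\alpha\otimes r_\beta\otimes r_{\overline{\alpha}}$ begins with $g_0\neq e$ and ends with $g_0^{-1}\neq e$, i.e.\ lies in $G_2$. As this holds for each $\beta\in G$, we obtain $\{\alpha\}\circ G\circ\{\overline{\alpha}\}\subset G_2$.

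The step I expect to be the main obstacle is the precise bookkeeping in Part (3): one must check that throughout the double cancellation the surviving prefix $u$ and suffix $v$ always have length at least $2$ at the relevant ends, so that the fusion operation $u.v$ (which alters the boundary letters) can neither erase the leading $g_0$ and trailing $g_0^{-1}$ nor accidentally create a new $e$ at an extremity; this is exactly what the choice $T>\max\{|\beta|\}$ guarantees, by keeping every cancellation strictly inside the $e$-blocks of $\beta$.
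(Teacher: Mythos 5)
Your proposal is correct and follows essentially the same route as the paper: the same first/last-letter cancellation analysis of the fusion rules for parts (1) and (2), and for part (3) the same conjugating element $(g_0,e^T)$ with $T$ exceeding $\max\{|\beta|:\beta\in G\}$, so that all cancellation and fusion stays strictly inside the $e$-blocks and the boundary letters $g_0$, $g_0^{-1}$ survive. The only difference is cosmetic (the paper records the result of part (3) as an explicit double direct sum indexed by the cancelled lengths, while you phrase it as a cancellation lemma), so there is nothing to add.
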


\begin{proof}
The first assertion in $(1)$ is clear. The second follows from the following computations for $g,g'\ne e$:
$$(g,\dots,g')\otimes(e,\dots)=(g,\dots,g',e,\dots)\oplus(g,\dots,g',\dots),$$ and
 the third from 
$$(g_0,e^t)\otimes(g,\dots)=(g_0,e^t,g,\dots)\oplus(g_0,e^{t-1},g,\dots)$$ for all $g\ne e$ and $t=1,3,5$ (where we made the convention that in the case $t=1$, the letter $1=e^0$ is deleted in the second term of the right hand side in the above equality).

The assertion $(2)$ follows from the following computation for $g,g'\ne e$:
\begin{align*}
(g_0,e^t)\otimes(g,\dots,g')&\otimes(e^t,g_0^{-1})=((g_0,e^t,g,\dots,g')\oplus(g_0,e^{t-1},g,\dots,g'))\otimes(e^t,g_0^{-1})\\
&=(g_0,e^t,g,\dots,g',e^t,g_0^{-1})\oplus(g_0,e^t,g,\dots,g',e^{t-1},g_0^{-1})\oplus\\
&\oplus(g_0,e^{t-1},g,\dots,g',e^t,g_0^{-1})\oplus(g_0,e^{t-1},g,\dots,g',e^{t-1},g_0^{-1}).
\end{align*}

For $(3)$ consider $\alpha=(g_0,e,\dots,e)=(g_0,e^m)$ where the number of letters $e$ in the word $\alpha$ is greater than $m:=\max\{|\beta| : \beta\in G\}$. For any $\gamma\in G$, we can write $\gamma=(e^{l-1},h_l,\dots,h_k)$ with $h_l\ne e, 1\le l\le k$. Then 
\begin{align*}
\alpha\otimes\gamma=\bigoplus_{s=-(l-1)}^{l-1}(g_0,e^{m+s},h_l,\dots,h_k)\ \text{ since } l-1<k\le m.
\end{align*}
Now write $\gamma=(e^{l-1},h_l,\dots,h_{l'},e^{k-l'})$ with $l\le l'\le k$ and compute
\begin{align*}\alpha\otimes\gamma\otimes\bar\alpha&=(g_0,e^m)\otimes\gamma\otimes(e^m,g_0^{-1})\\
&=\bigoplus_{r=-(k-l')}^{k-l'}\bigoplus_{s=-(l-1)}^{l-1}(g_0,e^{m+s},h_l,\dots,h_{l'},e^{r+m},g_0^{-1}).\\
\end{align*}
Hence, $\{(g_0,e^m)\}\circ G\circ\{(e^m,g_0^{-1})\}\subset G_2$.
\end{proof} 

We shall apply Proposition \ref{decomm} to $G=\supp(z_0)$ (with $z_0\in \mathscr{S}$ as above Proposition \ref{decomm}). We will then get an element $z'=\sum_ia_iz_0a_i^*$ with support in $G_2$, where $(a_i)\subset \Pol(H_N^+(\Gamma))$ is a finite family of coefficients of $r_{\alpha}$ with $\alpha\in S$ obtained by the assertion $(3)$ of the previous proposition. 
The proof of the simplicity of $C_r(H_N^+(\Gamma))$ will then rely on a result proved in \cite{MR1484551} that we recall below.




We recalled in Section \ref{preliminaries} the construction of the adjoint representation of a compact quantum group of Kac type. We obtained for all irreducible characters $\chi_r$, a completely positive map $\text{ad}(\chi_r)\in B(C_r(H_N^+(\Gamma))),$ such that $$\text{ad}(\chi_r)(z)=\sum_{i,j}r_{ij}zr_{ij}^*.$$ We will simply denote $\text{ad}(r):=\text{ad}(\chi_r)$, if $r\in \text{Irr}(H_N^+(\Gamma))$. We easily see that $\text{ad}(r)(1)=\dim(r)1$ and $\tau(\text{ad}(r)(z))=\dim(r)\tau(z)$ for any trace $\tau\in C_r(H_N^+(\Gamma))^*$. With these notation, we will take $z'=\text{ad}(r_{\alpha})(z_0)$.

We put $d_t=\dim(r_{\alpha_t})$ with $\alpha_t, t=1,3,5$ defined in Proposition \ref{decomm}. Then, if one considers the maps $\dfrac{\text{ad}(r_{\alpha_t})}{d_t}$, one can get as in \cite[Proposition 8]{MR1484551}:

\begin{prp}\label{construct}
The unital and completely positive linear map $T : C_r(H_N^+(\Gamma))\to C_r(H_N^+(\Gamma)),$ $T=\sum_{t\in\{1,3,5\}}\dfrac{\text{ad}(\alpha_t)}{3d_t}$ is such that: 
\begin{enumerate}
\item\label{mapp} $T(z)=\sum_i a_iza_i^*$ for some finite family $(a_i)\subset \Pol(H_N^+(\Gamma))$.
\item $T$ is $\tau$-preserving for any trace $\tau\in C_r(H_N^+(\Gamma))^*$ (hence $h$-preserving).
\item For all $z=z^*\in C_r(H_N^+(\Gamma))$ with $ \supp(z)\circ E_1\cap E_1=\emptyset$, we have $||T(z)||_r\le0.95||z||_r$ and $\supp(T(z))\subset\bigcup_t\{\alpha_t\}\circ \supp(z)\circ\{\overline{\alpha_t}\}$.
\end{enumerate}
\end{prp}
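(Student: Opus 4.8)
The plan is to settle the algebraic assertions (1) and (2) straight from the definition of $T$ and then to focus on the norm inequality in (3), which carries all the weight. For (1), I would expand
$$T(z)=\sum_{t\in\{1,3,5\}}\frac{1}{3d_t}\sum_{i,j}(r_{\alpha_t})_{ij}\,z\,(r_{\alpha_t})_{ij}^{*},$$
so that the elements $\frac{1}{\sqrt{3d_t}}(r_{\alpha_t})_{ij}$, for $t\in\{1,3,5\}$ and $1\le i,j\le\dim(r_{\alpha_t})$, constitute the desired finite family of coefficients of irreducible corepresentations, hence lie in $\Pol(H_N^+(\Gamma))$. Unitality and complete positivity follow by writing $T=\frac13\sum_t\frac{1}{d_t}\text{ad}(\alpha_t)$ as a convex combination of the unital completely positive maps $\frac{1}{d_t}\text{ad}(\alpha_t)$ (using $\text{ad}(\alpha_t)(1)=d_t1$ and the complete positivity of $\text{ad}$ recalled in Section~\ref{preliminaries}). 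Assertion (2) is then immediate from $\tau(\text{ad}(r)(z))=\dim(r)\tau(z)$: each of the three summands contributes $\tfrac13\tau(z)$.

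For the support statement in (3), I would use that $(r_{\alpha_t})_{ij}^{*}$ is a coefficient of $\overline{r_{\alpha_t}}=r_{\overline{\alpha_t}}$ and that the support of a product of coefficients is contained in the set of irreducibles occurring in the associated tensor product; hence each term has support in $\{\alpha_t\}\circ\supp(z)\circ\{\overline{\alpha_t}\}$, giving $\supp(T(z))\subset\bigcup_t\{\alpha_t\}\circ\supp(z)\circ\{\overline{\alpha_t}\}$. The first genuine step toward the norm bound is to exploit the hypothesis $\supp(z)\circ E_1\cap E_1=\emptyset$. Since $\overline{\alpha_t}=(e^t,g_0^{-1})\in E_1$ (Notation~\ref{ensbles}), monotonicity of $\circ$ gives $\supp(z)\circ\{\overline{\alpha_t}\}\subset\supp(z)\circ E_1\subset{}^cE_1=G_1$, so that, writing $w_t:=\frac{1}{d_t}\text{ad}(\alpha_t)(z)$, we have $\supp(w_t)\subset\{\alpha_t\}\circ G_1$. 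Assertion (1) of Proposition~\ref{decomm} then guarantees that the three self-adjoint contractions $w_1,w_3,w_5$ (each satisfies $\|w_t\|_r\le\|z\|_r$ because $\frac{1}{d_t}\text{ad}(\alpha_t)$ is unital completely positive) have pairwise disjoint supports, whence in particular $h(w_sw_t)=0$ for $s\ne t$.

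It remains to convert this disjointness into a strict norm reduction for $T(z)=\tfrac13(w_1+w_3+w_5)$, which I would do by the Powers-type moment argument of \cite[Proposition 8]{MR1484551}. Since $T(z)$ is self-adjoint and $h$ is a faithful trace, $\|T(z)\|_r=\lim_{k\to\infty}h\big(T(z)^{2k}\big)^{1/2k}$, and one expands $h(T(z)^{2k})=3^{-2k}\sum h(w_{t_1}\cdots w_{t_{2k}})$. Using that $h(xy)=0$ whenever $\supp(x)\cap\overline{\supp(y)}=\emptyset$ together with the pairwise disjointness of the supports of the $w_t$, one checks that only those words $(t_1,\dots,t_{2k})$ that fully reduce through successive cancellations of equal‑index neighbours survive; these are enumerated by non-crossing pair partitions, each surviving trace being bounded by $\|z\|_r^{2k}$. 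The resulting Catalan-type growth produces a free-probabilistic bound of the form $\|T(z)\|_r\le\frac{2\sqrt2}{3}\,\|z\|_r\approx0.943\,\|z\|_r<0.95\,\|z\|_r$. I expect this last combinatorial estimate — showing that $w_1,w_3,w_5$ behave like free, centered, self-adjoint contractions so that the norm of their average is governed by the $n=3$ constant $\frac{2\sqrt{n-1}}{n}$ — to be the main obstacle, and it is exactly the bookkeeping performed in \cite[Proposition 8]{MR1484551}.
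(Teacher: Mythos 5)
Your treatment of assertions (1) and (2), and of the support inclusion in (3), is correct and is the standard argument: $T$ is a convex combination of the unital completely positive maps $d_t^{-1}\,\text{ad}(\alpha_t)$, its Kraus operators are the normalized coefficients $\frac{1}{\sqrt{3d_t}}(r_{\alpha_t})_{ij}\in\Pol(H_N^+(\Gamma))$, trace-invariance follows from $\tau(\text{ad}(r)(z))=\dim(r)\tau(z)$, and the derivation $\supp(w_t)\subset\{\alpha_t\}\circ G_1$ from $\overline{\alpha_t}\in E_1$, the hypothesis $\supp(z)\circ E_1\cap E_1=\emptyset$ and Proposition \ref{decomm}\,(1) is the right first step toward (3).

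The norm inequality, however, is where all the content of the proposition lies, and your argument for it has a genuine gap. Pairwise disjointness of $\supp(w_1),\supp(w_3),\supp(w_5)$ only yields Peter--Weyl orthogonality in $L^2$, i.e. $h(w_sw_t)=0$ for $s\neq t$; it gives no control whatsoever on the higher mixed moments $h(w_{t_1}\cdots w_{t_{2k}})$. To decide whether such a moment vanishes you must locate $\supp(w_{t_1}\cdots w_{t_{2k-1}})$, which is only known to lie in $\supp(w_{t_1})\circ\cdots\circ\supp(w_{t_{2k-1}})$ --- a set that spreads out under the fusion rules and will in general meet $\overline{\supp(w_{t_{2k}})}$ even when consecutive indices differ. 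Your claim that ``only words that fully reduce through successive cancellations of equal-index neighbours survive'' is exactly the assertion that $w_1,w_3,w_5$ have vanishing alternating centered moments, i.e. behave freely with respect to $h$; this is neither proved nor a consequence of anything you have established, and without it the non-crossing/Catalan count and the constant $\frac{2\sqrt2}{3}$ do not follow. It is also not the mechanism of the proof the paper invokes: Banica's modified Powers method works at the operator level on $L^2(H_N^+(\Gamma))$, using the hypothesis $\supp(z)\circ E_1\cap E_1=\emptyset$ together with Proposition \ref{decomm} to produce pairwise orthogonal projections $q_t$ (onto the closed spans of coefficients supported in the disjoint sets $\{\alpha_t\}\circ G_1$) satisfying the compression identity $(1-q_t)w_t(1-q_t)=0$, and then concludes with a Powers-type Hilbert-space lemma yielding the factor $0.95$. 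That operator identity is a much stronger exploitation of the support hypotheses than $L^2$-orthogonality, and it is precisely the step your proposal replaces by an unjustified freeness claim. As written, the key inequality $\|T(z)\|_r\le 0.95\,\|z\|_r$ is not established.
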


Then, we can get the simplicity of $C_r(H_N^+(\Gamma))$.

\begin{thm}\label{simpp}
$C_r(H_N^+(\Gamma))$ is simple with unique trace $h$, for all $N\ge8$ and any discrete group $\Gamma$, $|\Gamma|\ge2$.
\end{thm}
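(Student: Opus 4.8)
The plan is to combine the conditional expectation $P\colon C_r(H_N^+(\Gamma))\twoheadrightarrow\mathscr{C}'\simeq C_r(S_N^+)$ with the simplicity of $C_r(S_N^+)$ (valid for $N\ge8$) and a Powers-type averaging argument on the complementary part $\mathscr{S}'=\ker(P)$. Let $J\lhd C_r(H_N^+(\Gamma))$ be a nonzero closed two-sided ideal; since ideals are $*$-closed, $P(J)$ is an ideal of $\mathscr{C}'$, so by the simplicity of $C_r(S_N^+)$ either $P(J)=\{0\}$ or $P(J)=\mathscr{C}'$. In the first case $J\subset\ker(P)=\mathscr{S}'\subset\ker(h)$, and since $h$ is faithful on $C_r(H_N^+(\Gamma))$, the relation $h(x^*x)=0$ for $x\in J$ forces $x=0$, i.e. $J=\{0\}$; this is excluded, so necessarily $P(J)=\mathscr{C}'$.

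Assume now $P(J)=\mathscr{C}'$. Since $1\in\mathscr{C}'$, I would pick $x\in J$ with $P(x)=1$, so that $x=1-z$ with $z\in\mathscr{S}'$; replacing $x$ by $(x+x^*)/2$ I may assume $z=z^*$. Approximating, write $z=z_0+(z-z_0)$ with $z_0=z_0^*\in\mathscr{S}$ of finite support $G:=\supp(z_0)\subset S$ and $\|z-z_0\|_r<1/2$. The core of the argument is to build a \emph{unital completely positive} map $\Phi=\sum_i\beta_i(\cdot)\beta_i^*$, with $(\beta_i)\subset\Pol(H_N^+(\Gamma))$, such that $\|\Phi(z_0)\|_r<1/2$. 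I would do this in two stages. First, Proposition \ref{decomm}(\ref{perm}) produces $\alpha=(g_0,e^m)\in S$ with $\{\alpha\}\circ G\circ\{\overline\alpha\}\subset G_2$, so $z_1:=\text{ad}(r_\alpha)(z_0)/\dim(r_\alpha)$ is a u.c.p.\ image of $z_0$ with $z_1=z_1^*$ and $\supp(z_1)\subset G_2$. By Proposition \ref{decomm}(\ref{decomp}) one has $G_2\circ E_1\cap E_1=\emptyset$, so every hermitian element supported in $G_2$ satisfies the hypothesis of Proposition \ref{construct}(3); moreover Proposition \ref{decomm}(\ref{gs}) gives $\bigcup_t\{\alpha_t\}\circ G_2\circ\{\overline{\alpha_t}\}\subset G_2$, so the map $T$ of Proposition \ref{construct} both contracts, $\|T(w)\|_r\le0.95\|w\|_r$, and preserves the region $G_2$. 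Iterating, $\|T^n(z_1)\|_r\le0.95^n\|z_1\|_r\le0.95^n\|z_0\|_r$, which is $<1/2$ for $n$ large. Setting $\Phi:=T^n\circ(\text{ad}(r_\alpha)/\dim(r_\alpha))$ — again of the form $\sum_i\beta_i(\cdot)\beta_i^*$, unital, and hence contractive — I would estimate
$$\|1-\Phi(x)\|_r=\|\Phi(z)\|_r\le\|\Phi(z_0)\|_r+\|\Phi(z-z_0)\|_r<1/2+1/2=1.$$
Hence $\Phi(x)=\sum_i\beta_ix\beta_i^*\in J$ is invertible, forcing $J=C_r(H_N^+(\Gamma))$ and proving simplicity.

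For uniqueness of the trace, let $\tau$ be any trace on $C_r(H_N^+(\Gamma))$. Its restriction to $\mathscr{C}'\simeq C_r(S_N^+)$ is a trace, and since $C_r(S_N^+)$ has a unique trace for $N\ge8$, we get $\tau|_{\mathscr{C}'}=h|_{\mathscr{C}'}$. For a hermitian $z\in\mathscr{S}$ I would again push its support into $G_2$ by $z_1:=\text{ad}(r_\alpha)(z)/\dim(r_\alpha)$ and then iterate $T$. Since both operations are $\tau$-preserving (Proposition \ref{construct}(2) together with $\tau(\text{ad}(r)(z))=\dim(r)\tau(z)$) and norm-contracting on $G_2$, I obtain $|\tau(z)|=|\tau(z_1)|=|\tau(T^n(z_1))|\le0.95^n\|z_1\|_r\to0$, whence $\tau(z)=0=h(z)$. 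By density of $\mathscr{S}$ in $\mathscr{S}'$ this gives $\tau|_{\mathscr{S}'}=0=h|_{\mathscr{S}'}$, and combined with the decomposition $C_r(H_N^+(\Gamma))=\mathscr{C}'\oplus\mathscr{S}'$ and $h=h_{\mathscr{C}'}\circ P$ it follows that $\tau=h$.

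The main obstacle is the second stage: arranging a single completely positive averaging operator that simultaneously drives the support of $z_0$ into the \emph{good} set $G_2$, where the quantitative contraction estimate holds, and then shrinks the norm past the critical value $1/2$, all while remaining of the form $\sum_i\beta_i(\cdot)\beta_i^*$ so that it maps the ideal $J$ into itself. This is precisely where the fusion-rule bookkeeping of Proposition \ref{decomm} and the $0.95$-contraction of Proposition \ref{construct} are indispensable; the remainder is a formal assembly of these inputs in the spirit of Banica's modified Powers method from \cite{MR1484551}, with the reduction to $C_r(S_N^+)$ supplying the base of the argument.
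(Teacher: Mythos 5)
Your proposal is correct and follows essentially the same route as the paper: the conditional expectation $P$ onto $\mathscr{C}'\simeq C_r(S_N^+)$, the dichotomy $P(J)=\{0\}$ or $P(J)=\mathscr{C}'$ resolved via faithfulness of $h$ and simplicity of $C_r(S_N^+)$, the approximation $x=1-z_0-(z-z_0)$, the push of $\supp(z_0)$ into $G_2$ by $\mathrm{ad}(r_\alpha)$ via Proposition \ref{decomm}(3), the iterated $0.95$-contraction of Proposition \ref{construct} using the stability $\bigcup_t\{\alpha_t\}\circ G_2\circ\{\overline{\alpha_t}\}\subset G_2$, and the same averaging argument for uniqueness of the trace. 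The only differences are cosmetic (explicit normalization of $\mathrm{ad}(r_\alpha)$ by $\dim(r_\alpha)$ and writing the iteration as $T^n$ rather than a product $V_m\cdots V_1$).
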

\begin{proof}
We denote by $\tau$ any faithful normal trace on $C_r(H_N^+(\Gamma))$. 

We recall that we assume now $P(J)=\mathscr{C}'$ (we already proved above that $P(J)=\{0\}\Rightarrow J=\{0\}$). 
By the discussion before Proposition \ref{decomm}, it remains to prove that there exists a finite family $(\beta_i)\subset \Pol(H_N^+(\Gamma))$ such that $||\sum_i\beta_iz_0\beta_i^*||_r<1/2$.

We first apply Proposition \ref{construct} to the hermitian $z'=\text{ad}(r_{\alpha})(z_0)$ with $\alpha$ obtained in Proposition \ref{decomm}. Notice that $\text{ad}(r_{\alpha})(z_0)=\sum_ia_iz_0a_i^*\in\mathscr{S}$. 
We then get a unital $\tau$-preserving linear map $V_1 : C_r(H_N^+(\Gamma))\to C_r(H_N^+(\Gamma))$ of the form $z\mapsto \sum_ic_izc_i^*$ where $(c_i)\subset \Pol(H_N^+(\Gamma))$ is a finite family, with 
$$||V_1(z')||_r\le0.95||z'||_r$$
since $\supp(z')\subset G_2$ and $G_2\circ E_1\cap E_1=\emptyset$. Moreover,
$$\supp(V_1(z'))\subset\cup_t\{\alpha_t\}\circ \supp(z')\circ \{\overline{\alpha_t}\}\subset G_2.$$
Then the $\tau$-preserving map $z\mapsto V_1\text{ad}(r_{\alpha})(z)$ is of the form $z\mapsto\sum_id_izd_i^*$ for some finite family of elements $d_i\in \Pol(H_N^+(\Gamma))$, and satisfies $$V_1(\text{ad}(r_{\alpha})(z_0))^*=V_1(\text{ad}(r_{\alpha})(z_0)).$$

Thus we can apply the same arguments we just used as many times as needed so that there exists maps $V_2,\dots, V_m$ similar to $V_1$, such that $$||V_m\dots V_1\text{ad}(r_{\alpha})(z_0)||_r<1/2.$$ We put $V:=V_m\dots V_1\text{ad}(r_{\alpha})$, which is a completely positive, unital map in $B(C_r(H_N^+(\Gamma)))$ of the form $$z\mapsto \sum_i\beta_iz\beta_i^*,$$  where $(\beta_i)\subset \Pol(H_N^+(\Gamma))$ is finite and $||\sum_i\beta_iz_0\beta_i^*||_r<1/2$. 


The simplicity of $C_r(H_N^+(\Gamma))$ then follows from these observations and the discussion before Proposition \ref{decomm}.

For the uniqueness of the trace let us first show that if $\tau$ is any (positive) trace on $C_r(H_N^+(\Gamma))$ then the restriction to $\mathscr{S}'$ is the restriction of the Haar state $h$. Indeed, let $x=x^*\in \mathscr{S}$. Then, on the one hand we have $h(x)=0$ and on the other hand if $\epsilon>0$, we can apply the method above to find a finite family $(a_i)\subset \Pol(H_N^+(\Gamma))$ such that $y=\sum_ia_ixa_i^*$ has norm less than $\epsilon>0$ and thus $|\tau(x)|=|\tau(y)|\le\epsilon$. Letting $\epsilon\to0$, we get that $h$ and $\tau$ coincide on the hermitians of $\mathscr{S}$. But any $x\in \mathscr{S}$ is the sum of two hermitians hence $\tau|_{\mathscr{S}}=h|_{\mathscr{S}}$. By continuity, we get $\tau|_{\mathscr{{S'}}}=h|_{\mathscr{S'}}$.

Now take any $x\in C_r(H_N^+(\Gamma))$ and write $x=y+z$ with $y\in \mathscr{C'}$ and $z\in \mathscr{S'}$. One has $$\tau(y+z)=\tau(y)+\tau(z)=h_{S_N^+}(y)+0$$
by the uniqueness of the trace on $C_r(S_N^+)$ and what we just proved on $\mathscr{S}'$. We then get $\tau(x)=h_{S_N^+}(y)=h(x)$ hence $\tau=h$.

\end{proof}
In particular there is a unique faithful normal trace on the von Neumann algebra $L^{\infty}(H_N^+(\Gamma))$, given by the extension of the Haar state $h$ i.e.:
\begin{crl}
$L^{\infty}(H_N^+(\Gamma))$ is a $II_1$-factor for all $N\ge8$, and any discrete group $|\Gamma|\ge2$.
\end{crl}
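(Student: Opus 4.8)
The plan is to read off both factoriality and the type directly from Theorem~\ref{simpp}. Since $H_N^+(\Gamma)$ is of Kac type, its Haar state $h$ is a trace, and as recalled in Section~\ref{preliminaries} it extends to a faithful normal tracial state on $L^{\infty}(H_N^+(\Gamma))=C_r(H_N^+(\Gamma))''$. Hence $L^{\infty}(H_N^+(\Gamma))$ is automatically a finite von Neumann algebra equipped with a faithful normal trace, and the only two things left to verify are that it is a factor and that it is infinite dimensional; together these pin the type down to $II_1$.

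For factoriality I would argue by contradiction, upgrading the \emph{uniqueness of the trace} in Theorem~\ref{simpp} from the $C^*$-level to the von Neumann level. Suppose the center were nontrivial; then there is a central projection $p$ with $0<h(p)<1$, and $\tau_p(\cdot)=h(p\,\cdot)/h(p)$ and $\tau_{1-p}(\cdot)=h((1-p)\,\cdot)/h(1-p)$ are two \emph{distinct} normal tracial states on $L^{\infty}(H_N^+(\Gamma))$, distinct because $\tau_p(p)=1\neq 0=\tau_{1-p}(p)$. Restricting each to the $\sigma$-weakly dense $C^*$-subalgebra $C_r(H_N^+(\Gamma))$ produces two tracial states there, and the uniqueness part of Theorem~\ref{simpp} forces both to equal $h|_{C_r(H_N^+(\Gamma))}$. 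But normal functionals are $\sigma$-weakly continuous, so two of them agreeing on a $\sigma$-weakly dense subalgebra must agree everywhere; thus $\tau_p=\tau_{1-p}$, a contradiction. Therefore the center is trivial and $L^{\infty}(H_N^+(\Gamma))$ is a factor.

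Finally, to exclude the finite-dimensional (type $I_n$) case, I would invoke the dimension formula obtained above: the irreducible corepresentations $r_\alpha$ have dimensions $\prod_i A_{l_i}(\sqrt{N})$, and since $N\ge 8$ one has $\sqrt{N}>2$, so $A_k(\sqrt{N})\to\infty$ and there exist irreducibles of arbitrarily large dimension. By orthogonality of the matrix coefficients of inequivalent irreducibles, $\Pol(H_N^+(\Gamma))\subset C_r(H_N^+(\Gamma))\subset L^{\infty}(H_N^+(\Gamma))$ is infinite dimensional, so $L^{\infty}(H_N^+(\Gamma))$ cannot be a finite matrix algebra. A finite factor that is infinite dimensional is of type $II_1$, which completes the argument. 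I expect the only delicate point to be the $C^*$-to-von-Neumann transfer of trace uniqueness in the factoriality step; everything else is routine, and that step is handled by the $\sigma$-weak continuity of normal states together with the $\sigma$-weak density of $C_r(H_N^+(\Gamma))$ in its bicommutant.
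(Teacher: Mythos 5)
Your argument is correct and is exactly the route the paper takes: the corollary is deduced from Theorem~\ref{simpp} by transferring uniqueness of the trace from $C_r(H_N^+(\Gamma))$ to uniqueness of the normal trace on $L^{\infty}(H_N^+(\Gamma))$, which rules out a nontrivial center, and infinite-dimensionality then forces type $II_1$. Your explicit treatment of the central projection and the $\sigma$-weak continuity step is a sound filling-in of what the paper leaves implicit.
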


\subsection{Fullness of the $II_1$-factor $L^{\infty}(H_N^+(\Gamma))$}\

In this subsection, $N$ is again an integer greater than $8$ and $\Gamma$ is any discrete group $|\Gamma|\ge2$. Once more the case $3\le N\le7$ remain open. If $N=2$, we have that $L^{\infty}(H_2^+(\Gamma))=L(\Gamma*\Gamma\times\Z_2)$ and it is not a factor since $\Gamma*\Gamma\times\Z_2$, containing $\Z_2$ in its center, is not icc . 
We will denote by $||.||_2$ the $L^2(H_N^+(\Gamma))$-norm with respect to the tracial Haar state $h$. 

\begin{defi}(\cite{MvN43})
Let $(M,\tau)$ be a $II_1$-factor with unique faithful normal trace $\tau$. A sequence $(x_n)\subset M$ is said to be asymptotically central if for all $y\in M$, $||x_ny-yx_n||_2\to0$. We say that $(x_n)$ is asymptotically trivial if $||x_n-\tau(x_n)1||_2\to0$. The $II_1$-factor $(M,\tau)$ is said to be full if every bounded asymptotically central sequence is trivial. 
\end{defi}

We use the decomposition of the preceding subsection \ref{simplicit}, $\Pol(H_N^+(\Gamma))=\mathscr{C}\oplus \mathscr{S}$ which gives the vector space decomposition $L^{\infty}(H_N^+(\Gamma))=\overline{\mathscr{C}}^{\sigma-w}\oplus \overline{\mathscr{S}}^{\sigma-w}$. 

We want to prove the fullness of $L^{\infty}(H_N^+(\Gamma))$. It is easy to see that it is enough to consider sequences in the dense subalgebra $\Pol(H_N^+(\Gamma))$.
We then fix a bounded asymptotically central sequence $(x_n)\subset \Pol(H_N^+(\Gamma)$. One can write $x_n=y_n+z_n$, with $y_n\in \mathscr{C}, z_n\in \mathscr{S}$. If $a\in \overline{\mathscr{C}}^{\sigma-w}$, we have
\begin{align*}
||y_na-ay_n||_{2,L^2(\mathscr{{C'}})}&=||y_na-ay_n||_{2}\\
&=||P(x_n)a-aP(x_n)||_{2}\\
&=||P(x_na)-P(ax_n)||_{2}\le||x_na-ax_n||_2
\end{align*}
where the last $2$-norms are the $L^2(H_N^+(\Gamma))$-norm. The first equality above comes from the fact that the restriction to $\mathscr{C}'$ of the Haar state $h$ of $C_r(H_N^+(\Gamma))$ is the Haar state on $\mathscr{{C'}}$.
We then get $||y_na-ay_n||_{2}\to0$, $\forall a\in \overline{\mathscr{C}}^{\sigma-w}\simeq L^{\infty}(S_N^+)$. As a result, we obtain that $||y_n-h(y_n)1||_{2}\to0$ since $L^{\infty}(S_N^+)$ is a full factor for $N\ge8$ (see \cite{Bra12}). In particular, $(y_n)$ is asymptotically central in $L^{\infty}(H_N^+(\Gamma))$, and hence this is also the case of $(z_n)$. 

To get the fulness of $L^{\infty}(H_N^+(\Gamma))$ it remains to prove that $(z_n)$ is asymptotically trivial. To do this we adapt the $``14-\epsilon$ method" introduced, in particular, to prove that $L(F_n)$ does not have the property $\Gamma$, see \cite{MvN43}. Our proof is based on the arguments used by Vaes to prove the fulness of $L^{\infty}(U_N^+)$, see the appendix in \cite{CFY13}.

Once more, we use the decomposition $S=E_3\sqcup G_1$ which gives two orthogonal subspaces in $L^2(H_N^+(\Gamma))$:
$$H_1=\overline{\text{span}}^{||.||_2}\{\Lambda_h(x) : \supp(x)\subset E_3\},$$
$$H_2=\overline{\text{span}}^{||.||_2}\{\Lambda_h(x) : \supp(x)\subset G_1\}$$
where $\Lambda_h$ is the GNS map associated to the Haar state $h$. Let us set $H:=H_1\perp H_2$, and $H_1^0, H_2^0$ the corresponding subspaces before taking closures. If $x\in L^{\infty}(H_N^+(\Gamma))$, we will simply write $x\equiv\Lambda_h(x)$ via $L^{\infty}(H_N^+(\Gamma))\hookrightarrow L^2(H_N^+(\Gamma))$. 

If $\beta\in S$, we set $d_{\beta}:=\dim(r_{\beta})$ and $K_{\beta}:=L^2(B(H_{\beta}),\frac{1}{d_{\beta}}Tr(\cdot))$, where $H_{\beta}$ is the representation space of $r_{\beta}\in \mathscr{S}\otimes B(H_{\beta})$, and we consider the isometry:
$$v_{\beta} : H\to H\otimes K_{\beta}, v_{\beta}a=r_{\beta}(a\otimes 1)r_{\beta}^*=r_{\beta}(a\otimes 1)r_{\overline{\beta}}.$$
We will denote the norm and scalar product on $L^2(H_N^+(\Gamma))$ by $||.||_2, \langle\cdot,\cdot\rangle_2$ and the scalar product and norm on the tensor spaces $H\otimes K_{\beta}$ simply by $||.||, \langle\cdot,\cdot\rangle$.


We fix an element $g\in\Gamma$, $g\ne e$. Remark that $g\ne e$ implies that we must assume $|\Gamma|\ge2$. The case $|\Gamma|=1$ corresponds to $S_N^+$ and the result we want to prove is already known in this case. 

We recall that we denote by $e^k$, the word $(e,\dots,e)$ with $k$ letters equal to $e$.

\begin{lem}\label{bidule} With the notation above, we have
\begin{enumerate}
\item $\forall \beta\in E_3$, $\{(g)\}\circ\beta\circ\left\{\overline{(g)}\right\}\subset G_1$,
\item $\{e^i\}\circ G_1\circ \{e^i\}\subset E_3, i=2,4$,
\item $\{e^2\}\circ G_1\circ \{e^2\}\cap \{e^4\}\circ G_1\circ \{e^4\}=\emptyset$.
\end{enumerate}
\end{lem}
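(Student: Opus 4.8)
The plan is to prove all three assertions by unwinding the fusion rules of Theorem~\ref{nonalter} and tracking only the \emph{left end} of the words involved. The guiding observation is that in a product $\omega(x)\otimes\omega(y)$ every irreducible summand has the form $\omega(u,v)$ or $\omega(u.v)$, where $x=u,t$ and $y=\overline t,v$ for a common cancellable block $t$; thus the decomposition alters $x$ only near its right end and $y$ only near its left end. Consequently, as long as the cancelled suffix of $x$ does not reach the first non-$e$ letter of $x$, the leading run of $e$'s of $x$ (in particular its first letter) is carried unchanged into every summand. The entire lemma then reduces to checking that the relevant cancellations never eat through this protected prefix and that the surviving word $u$ is never empty.

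For (1) I would first compute $\{(g)\}\circ\beta$. Writing $\beta=(e,\beta_2,\dots,\beta_m)\in E_3$, so that $\beta$ starts with $e$, contains a non-$e$ letter, and has length $m\ge 2$, the only suffix of $(g)$ whose conjugate is a prefix of $\beta$ is the empty one, since the first letter of $\beta$ is $e\ne g^{-1}$ (here $g\ne e$ is used). Hence $\{(g)\}\circ\beta=\{(g,e,\beta_2,\dots,\beta_m),(g,\beta_2,\dots,\beta_m)\}$, two words of length $\ge 2$, each beginning with $g$. Right-tensoring such a $\gamma$ by $\overline{(g)}=(g^{-1})$ can only cancel the last letter of $\gamma$ (or nothing) before fusing or concatenating; since $|\gamma|\ge 2$, the leading $g$ survives in every summand, giving $\{(g)\}\circ\beta\circ\{\overline{(g)}\}\subset G_1$.

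For (2) the same bookkeeping applies with the two ends interchanged. For $\delta=(h_1,\dots)\in G_1$ with $h_1\ne e$, no leading $e$ of $\delta$ can be cancelled against the trailing $e$'s of $e^i$, so $\{e^i\}\circ\delta=\{(e^i,\delta),(e^{i-1},\delta)\}$; since $i\ge 2$, both words still begin with at least one $e$ while retaining the non-$e$ letter $h_1$, hence lie in $E_3=E_1\setminus E_2$. Right-tensoring such a $\gamma$ by $e^i$ alters $\gamma$ only by trimming or appending $e$'s at its right end, leaving both the leading $e$-run and all non-$e$ letters of $\gamma$ unchanged; so every summand still starts with $e$ and still contains a non-$e$ letter, i.e. stays in $E_3$.

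For (3) I would promote this to an invariant: let $\ell(\gamma)$ be the number of leading $e$'s of $\gamma\in E_3$. The computations above show that every element of $\{e^i\}\circ\delta$ has $\ell\in\{i-1,i\}$, and that right-tensoring by $e^i$ leaves $\ell$ unchanged, because the cancelled suffix is made only of $e$'s and cannot reach the first non-$e$ letter. Hence every element of $\{e^i\}\circ G_1\circ\{e^i\}$ has $\ell\in\{i-1,i\}$, so the $i=2$ set is supported on $\ell\in\{1,2\}$ and the $i=4$ set on $\ell\in\{3,4\}$; as these ranges are disjoint, the two sets are disjoint. The main obstacle — and the only genuinely delicate point — is the careful verification that the cancelled block never reaches the protected prefix and that $u$ remains non-empty; this is precisely where the hypotheses $g\ne e$, $i\ge 2$, and $\delta\in G_1$ (respectively $\beta\notin E_2$) enter, and it is the step I would write out in full detail.
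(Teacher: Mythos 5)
Your argument is correct and takes essentially the same route as the paper: both unwind the fusion rules for $(g)\otimes\beta\otimes\overline{(g)}$ and $(e^i)\otimes\alpha\otimes(e^i)$ and observe that the cancelled block can never reach the leading letter(s), so every summand keeps its leading $g$ in case (1), respectively a leading run of exactly $i$ or $i-1$ letters $e$ followed by the untouched first non-$e$ letter in cases (2) and (3). The disjointness in (3) is obtained in the paper by exactly the invariant you call $\ell\in\{i-1,i\}$, read off from the explicit decompositions.
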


\begin{proof}
Let $\beta\in E_3$, i.e. $\beta=(e,h_1,\dots,h_l)$ with $l\ge1$ and $h_i\ne e$ at least for one $i\in\{1,\dots,l\}$, we have: 
\begin{align*}
(g)\otimes\beta\otimes\overline{(g)}&=(g,e,h_1,\dots,h_l)\otimes\overline{(g)}\oplus(g,h_1,\dots,h_l)\otimes\overline{(g)}\\
&=(g,e,h_1,\dots,h_l,g^{-1})\oplus(g,e,h_1,\dots,h_lg^{-1})+\delta_{h_lg^{-1},e}(g,e,h_1,\dots,h_{l-1})\\
&\oplus(g,h_1,\dots,h_l,g^{-1})\oplus(g,h_1,\dots,h_lg^{-1})+\delta_{h_lg^{-1},e}(g,h_1,\dots,h_{l-1})
\end{align*}
and then $(1)$ follows.

Now let $\alpha\in G_1$, i.e. $\alpha=(h_1,\dots,h_l)$ with $h_1\ne e$. We have 
\begin{align*}
(e^i)\otimes\alpha\otimes(e^i)&=((e^i,h_1,\dots,h_l)\oplus(e^{i-1},h_1,\dots,h_l))\otimes(e^i)\\
&=(e^i,h_1,\dots,h_l,e^i)\oplus(e^i,h_1,\dots,h_l,e^{i-1})+\delta_{h_l,e}(e^i,h_1,\dots,h_{l-1})\otimes(e^{i-2})\\
&\oplus(e^{i-1},h_1,\dots,h_l,e^i)\oplus(e^{i-1},h_1,\dots,h_l,e^{i-1})+\delta_{h_l,e}(e^{i-1},h_1,\dots,h_{l-1})\otimes(e^{i-2})
\end{align*}
and this gives $(2), (3)$.
\end{proof}

\begin{prp}
For all $z\in \mathscr{S}$, we have $$||z||_2\le14\max\{||z\otimes1-v_{(g)}z||,||z\otimes1-v_{e^2}z||,||z\otimes1-v_{e^4}z||\}.$$
\end{prp}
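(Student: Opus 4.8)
The plan is to split $z$ according to the orthogonal decomposition $S = E_3 \sqcup G_1$, writing $z = z_1 + z_2$ with $\supp(z_1) \subset E_3$ and $\supp(z_2) \subset G_1$, so that $z_1 \in H_1^0$, $z_2 \in H_2^0$ and $\|z\|_2^2 = \|z_1\|_2^2 + \|z_2\|_2^2$. Write $M$ for the maximum appearing on the right-hand side and, for $\beta \in \{(g), e^2, e^4\}$, set $w_\beta = z \otimes 1 - v_\beta z$, so that $\|w_\beta\| \le M$. Two structural facts will be used throughout. First, each $v_\beta$ is an isometry, so $\langle v_\beta a, v_\beta b\rangle = \langle a, b\rangle_2$. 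Second, a direct computation shows that the component of $v_\beta a$ along the identity vector $1 \in K_\beta$ is exactly $\frac{\operatorname{ad}(r_\beta)}{d_\beta}(a)$; since $\overline{e^i} = e^i$ and $\operatorname{ad}(r_\beta)(a)$ has support inside $\{\beta\} \circ \supp(a) \circ \{\overline{\beta}\}$, Lemma \ref{bidule} tells us precisely where these diagonal parts live.

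I would first bound $\|z_2\|_2$ using the two isometries $v_{e^2}, v_{e^4}$. Pairing $w_{e^i}$ with $v_{e^i} z_2$ and invoking the isometry property for the second term gives
\begin{equation*}
\|z_2\|_2^2 = \langle z \otimes 1, v_{e^i} z_2\rangle - \langle w_{e^i}, v_{e^i} z_2\rangle .
\end{equation*}
In the first term only the $1$-component of $v_{e^i} z_2$ contributes, namely $\frac{\operatorname{ad}(r_{e^i})}{d_{e^i}}(z_2)$, which by Lemma \ref{bidule}(2) is supported in $A_i := \{e^i\} \circ G_1 \circ \{e^i\} \subset E_3$; hence its pairing with $z_2$ (supported in $G_1$) vanishes and only the pairing with $z_1$ survives. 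Bounding the second term by $M\|z_2\|_2$ then yields $\|z_2\|_2 \le \|[z_1]_{A_i}\|_2 + M$, where $[z_1]_{A_i}$ denotes the part of $z_1$ with support in $A_i$. Because $A_2 \cap A_4 = \emptyset$ by Lemma \ref{bidule}(3), the pieces $[z_1]_{A_2}$ and $[z_1]_{A_4}$ are orthogonal, so $\|[z_1]_{A_2}\|_2^2 + \|[z_1]_{A_4}\|_2^2 \le \|z_1\|_2^2$; combining the two estimates for $i = 2, 4$ produces
\begin{equation*}
\|z_2\|_2 \le \tfrac{1}{\sqrt{2}}\,\|z_1\|_2 + M .
\end{equation*}

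Next I would bound $\|z_1\|_2$ by the same device applied to the single isometry $v_{(g)}$: pairing $w_{(g)}$ with $v_{(g)} z_1$ and using Lemma \ref{bidule}(1), which places the diagonal part $\frac{\operatorname{ad}(r_{(g)})}{d_{(g)}}(z_1)$ inside $G_1$, the self-pairing reproduces $\|z_1\|_2^2$ exactly while the remaining term is controlled by $\|z_2\|_2\,\|z_1\|_2 + M\|z_1\|_2$; this gives $\|z_1\|_2 \le \|z_2\|_2 + M$. Feeding this into the previous inequality closes the loop: solving the linear system $\{\|z_1\|_2 \le \|z_2\|_2 + M,\ \|z_2\|_2 \le \tfrac{1}{\sqrt 2}\|z_1\|_2 + M\}$ bounds both quantities by explicit multiples of $M$, and $\|z\|_2 = \sqrt{\|z_1\|_2^2 + \|z_2\|_2^2}$ then comes out well below $14M$ (in fact of order $9M$, so the constant $14$ is comfortable and leaves room for cruder intermediate estimates).

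The main obstacle is the cross terms, namely the images $v_{e^i} z_1$ and $v_{(g)} z_2$ of the ``wrong'' component, whose supports Lemma \ref{bidule} does not directly control. The key idea that disposes of them is to pair the defect $w_\beta$ against $v_\beta z_i$ rather than against $z_i \otimes 1$: the isometry property then turns the self-interaction into an exact copy of $\|z_i\|_2^2$, while Lemma \ref{bidule} forces the only surviving genuine cross term to see just a support-localized fragment of the other component. The disjointness statement Lemma \ref{bidule}(3) is precisely what supplies the contracting factor $\tfrac{1}{\sqrt 2}$ needed for the two inequalities to be simultaneously solvable; without it one obtains only circular estimates.
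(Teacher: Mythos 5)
Your argument is correct, and at the strategic level it is the paper's argument: the same splitting $z=z_1+z_2$ along $S=E_3\sqcup G_1$, the same three isometries $v_{(g)},v_{e^2},v_{e^4}$, all three parts of Lemma \ref{bidule} in the same roles, and the same endgame of solving two coupled inequalities between $\|z_1\|_2$ and $\|z_2\|_2$ up to the error $M$. Where you genuinely diverge is in how those two inequalities are extracted. The paper stays with the full vectors in the tensor spaces: for the first estimate it uses $v_{(g)}z_1\perp z_1\otimes 1$ and expands $\|z\|_2^2$ as the norm of a sum to get $\|z_2\|_2^2\ge\|z_1\|_2^2-2\|z_1\|_2\,\|z\otimes1-v_{(g)}z\|$; for the second it passes to the doubled space $H\otimes K_{e^2}\otimes K_{e^4}$, where $z_2\otimes1\otimes1$, $v^2z_2$ and $v^4z_2$ are pairwise orthogonal, to get $\|z_1\|_2^2\ge 2\|z_2\|_2^2-2\|z_2\|_2\bigl(\|z\otimes1-v_{e^2}z\|+\|z\otimes1-v_{e^4}z\|\bigr)$. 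You instead compress $v_\beta z_i$ onto the trivial leg of $K_\beta$, identify that compression with $\mathrm{ad}(r_\beta)/d_\beta$, and turn the orthogonality statements of Lemma \ref{bidule} into support statements about $\mathrm{ad}(r_\beta)(z_i)$ back in $L^2(H_N^+(\Gamma))$; the disjointness $\{e^2\}\circ G_1\circ\{e^2\}\cap\{e^4\}\circ G_1\circ\{e^4\}=\emptyset$ then supplies the factor $1/\sqrt2$ directly, without ever forming the doubled tensor space. The two routes yield estimates of essentially the same strength, but yours is linear rather than quadratic in the norms, which lightens the bookkeeping and gives a constant near $9$ instead of $14$. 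The one step you should make explicit is the contraction bound $\|\mathrm{ad}(r_\beta)(a)/d_\beta\|_2\le\|a\|_2$ (it follows because this map is the isometry $v_\beta$ followed by an orthogonal projection), since it is used silently when you bound the pairing of $[z_1]_{A_i}$ against $\mathrm{ad}(r_{e^i})(z_2)/d_{e^i}$ by $\|[z_1]_{A_i}\|_2\,\|z_2\|_2$.
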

\begin{proof}
We write $z=x+y$ with $x\in H^0_1$ and $y\in H^0_2$. We put $z':=v_{(g)}z, x':=v_{(g)}x$. Notice that by the relation $(1)$ of Lemma \ref{bidule} we have $v_{(g)}x\in H_2\otimes K_{(g)}$. In particular $\langle x',x\otimes 1\rangle=0$ if $\langle\cdot,\cdot\rangle$ denotes the scalar product on $H\otimes K_{(g)}$. We have:
\begin{align}\label{uno}
\langle z\otimes1-x\otimes 1-x',x'\rangle=\langle z\otimes1-x',x'\rangle=\langle z\otimes1-z',x' \rangle
\end{align}
where the last equality comes from the fact that $\langle v_{(g)}y,x'\rangle=\langle y,x\rangle_2=0$ since $v_{(g)}$ is an isometry. Thus we have $$|\langle z\otimes1-x\otimes 1-x',x' \rangle|\le ||z\otimes1-z'||\ ||x'||=||z\otimes1-z'||\ ||x||_2.$$ We get
\begin{align*}
||x||^2_2+||y||^2_2&=||z||^2_2=||z\otimes1-x\otimes 1-x'+x\otimes1+x'||^2\\
&=||z\otimes1-x\otimes 1-x'||^2+||x||^2_2+||x'||^2+2Re\langle z\otimes1-x\otimes 1-x', x'\rangle\\
&\ge||z\otimes1-x\otimes 1-x'||^2+||x||^2_2+||x'||^2-2||z\otimes1-z'||\ ||x||_2
\end{align*}
by (\ref{uno}) and $\langle z\otimes1-x\otimes 1-x', x\otimes1\rangle=0$ (since $\langle x',x\otimes 1\rangle=0$ and $x\in H_1,y\in H_2$). Then we obtain 
\begin{equation}\label{tab}
||y||^2_2\ge||x||^2_2-2||x||_2||z\otimes1-v_{(g)}z||.
\end{equation}

Now, we consider the isometries $$v^2 : H\to H\otimes K_{e^2}\otimes K_{e^4} \text{ defined by } v^2\xi=(v_{e^2}\xi)_{12},$$
$$v^4 : H\to H\otimes K_{e^2}\otimes K_{e^4} \text{ defined by } v^4\xi=(v_{e^4}\xi)_{13},$$
 We have by $(2), (3)$ of Lemma \ref{bidule}
$$v^iH_2\subset H_1\otimes K_{e^2}\otimes K_{e^4},\ i=2,4,$$
$$v^2H_2\perp v^4H_2$$
We set $$Y_2=v^2y, Y_3=v^4y,$$ $$Z_2=v^2z, Z_3=v^4z.$$
Then, $Y_2, Y_3\in H_1\otimes K_{e^2}\otimes K_{e^4}$ and $Y_2, Y_3$ are orthogonal. Notice that this implies that the vectors $y\otimes 1\otimes 1$, $Y_2$ and $Y_3$ are pairwise orthogonal in $H\otimes K_{e^2}\otimes K_{e^4}$ since $y\in H_2$.

Consider $X:=z\otimes1\otimes1-y\otimes1\otimes1-Y_2-Y_3$ and notice that $X$ is orthogonal to $y\otimes1\otimes1$. Now we compute the scalar product:
$$\langle X,Y_2\rangle=\langle z\otimes1\otimes1-Y_2,Y_2\rangle=\langle z\otimes1\otimes1-Z_2,Y_2\rangle,$$
since $\langle v^2x,Y_2\rangle_2=\langle x,y\rangle_2=0$. Hence
$$|\langle X,Y_2\rangle|\le||y||_2||z\otimes1\otimes1-Z_2||.$$
Similarly, we have $$|\langle X,Y_3\rangle|\le||y||_2||z\otimes1\otimes1-Z_3||.$$
We obtain, since $\langle X,y\otimes1\otimes1\rangle=0$,
\begin{align*}
||x||_2^2+||y||_2^2&=||z||_2^2=||X+y\otimes1\otimes1+Y_2+Y_3||^2\\
&\ge||X||^2+3||y||_2^2-2||y||_2||z\otimes1\otimes1-Z_2||^2-2||y||_2||z\otimes1\otimes1-Z_3||^2
\end{align*}
and one can deduce that
\begin{equation}\label{deuzio}
||x||_2^2\ge2||y||_2^2-2||y||_2||z\otimes1-v_{e^2}z||-2||y||_2||z\otimes1-v_{e^4}z||.
\end{equation}

We denote by $A=||z\otimes1-v_{(g)}z||, B=||z\otimes1-v_{e^2}||$ and $C=||z\otimes1-v_{e^4}||$ and we get, applying (\ref{tab}), the fact that $||x||_2,||y||_2\le||z||_2$ and  (\ref{deuzio}),
\begin{align*}
||y||_2^2&\ge||x||_2^2-2||x||_2A\ge||x||_2^2-2||z||_2A\\
&\ge2||y||_2^2-2||z||_2(A+B+C).
\end{align*}
We then obtain $||y||_2^2\le2||z||_2(A+B+C).$

On the other hand, applying (\ref{deuzio}), the fact that $||x||_2,||y||_2\le||z||_2$ and  (\ref{tab}), we get
\begin{align*}
||x||_2^2\ge2||y||_2^2-2||z||_2(B+C)\ge2||x||_2^2-2||z||_2(2A+B+C).
\end{align*}

We then obtain $||x||_2\le2||z||_2(2A+B+C)$ and we can conclude:
$$||z||_2^2\le14||z||_2\max\{A,B,C\}.$$
\end{proof}

\begin{thm}
The $II_1$-factor $L^{\infty}(H_N^+(\Gamma))$ is full for all discrete groups $\Gamma$ and all $N\ge8$.
\end{thm}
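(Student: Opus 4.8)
The plan is to apply the Proposition just stated to $z = z_n$ and to show that its right-hand side tends to $0$ along the asymptotically central sequence under consideration. Recall from the discussion preceding the Proposition that an arbitrary bounded asymptotically central sequence may be taken in $\Pol(H_N^+(\Gamma))$ and decomposed as $x_n = y_n + z_n$ with $y_n \in \mathscr{C}$, $z_n \in \mathscr{S}$, and that there it was already shown that $||y_n - h(y_n)1||_2 \to 0$; thus $(y_n)$ is asymptotically trivial and $(z_n) = (x_n - y_n)$ is again asymptotically central. Since $h(z_n) = 0$ (as $1 \notin S$), it suffices to prove $||z_n||_2 \to 0$: combined with the triviality of $(y_n)$ and $h(x_n) = h(y_n)$ this gives $||x_n - h(x_n)1||_2 \le ||y_n - h(y_n)1||_2 + ||z_n||_2 \to 0$, which is fullness.

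First I would translate the quantities $||z_n \otimes 1 - v_\beta z_n||$, for $\beta \in \{(g), e^2, e^4\}$, into honest commutators. Using unitarity of $r_\beta$ and the identity $z_n \otimes 1 = (z_n \otimes 1)r_\beta r_\beta^*$, one has
$$v_\beta z_n - z_n \otimes 1 = r_\beta(z_n \otimes 1)r_\beta^* - (z_n \otimes 1)r_\beta r_\beta^* = [r_\beta, z_n \otimes 1]\,r_\beta^*.$$
Right multiplication by the unitary $r_\beta^*$ is an isometry of $L^2\big(L^\infty(H_N^+(\Gamma)) \,\overline{\otimes}\, B(H_\beta)\big)$ for the trace $h \otimes \tfrac{1}{d_\beta}\mathrm{Tr}$, and $H \otimes K_\beta$ embeds isometrically in this $L^2$-space; hence $||z_n \otimes 1 - v_\beta z_n|| = ||[r_\beta, z_n \otimes 1]||$.

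Next I would expand the commutator over matrix coefficients. Writing $r_\beta = \sum_{i,j}(r_\beta)_{ij}\otimes e_{ij}$ gives $[r_\beta, z_n \otimes 1] = \sum_{i,j}[(r_\beta)_{ij}, z_n]\otimes e_{ij}$, and orthogonality of the $e_{ij}$ in $K_\beta$ yields
$$||[r_\beta, z_n \otimes 1]||^2 = \frac{1}{d_\beta}\sum_{i,j}||(r_\beta)_{ij}z_n - z_n(r_\beta)_{ij}||_2^2.$$
Each coefficient $(r_\beta)_{ij}$ lies in $L^\infty(H_N^+(\Gamma))$, so by asymptotic centrality of $(z_n)$ every term on the right tends to $0$; the sum being finite, $||z_n \otimes 1 - v_\beta z_n|| \to 0$ for each $\beta \in \{(g), e^2, e^4\}$. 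Feeding this into the Proposition gives $||z_n||_2 \le 14\max\{\,||z_n\otimes1 - v_{(g)}z_n||,\ ||z_n\otimes1 - v_{e^2}z_n||,\ ||z_n\otimes1 - v_{e^4}z_n||\,\} \to 0$, as required.

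The main obstacle I anticipate is this bridging step: verifying that the abstract deviation $||z_n \otimes 1 - v_\beta z_n||$, which measures the failure of $z_n$ to be fixed by the isometry $v_\beta$, is exactly governed by commutators of $z_n$ against the finitely many coefficients of $r_\beta$. This rests on two points that must be checked carefully: that the scalar product on $H \otimes K_\beta$ coincides with the $L^2$-norm of the ambient tracial von Neumann algebra $L^\infty(H_N^+(\Gamma)) \,\overline{\otimes}\, B(H_\beta)$, and that right multiplication by $r_\beta^*$ is isometric there. Once these identifications are in place, asymptotic centrality does all the remaining work and the Proposition closes the argument.
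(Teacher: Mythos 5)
Your proposal is correct and follows essentially the same route as the paper: the paper's proof of this theorem is a one-line deferral to the preceding $14$--$\epsilon$ proposition and the discussion before it, and you have simply carried out that deferred argument, including the (implicitly assumed) bridging identity $\|z_n\otimes 1 - v_\beta z_n\| = \|[r_\beta, z_n\otimes 1]\|$ and its expansion into commutators with the finitely many coefficients of $r_\beta$, which is exactly how asymptotic centrality of $(z_n)$ is meant to be fed into the proposition. All the computations check out, so this is a faithful and complete filling-in of the paper's intended proof.
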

\begin{proof}
This follows immediately from the previous proposition and the discussion before it.
\end{proof}


\begin{rque}
The fact that any bounded  asymptotically central sequence is asymptotically central implies that the center of $L^{\infty}(H_N^+(\Gamma))$ is trivial, and thus we get again that $L^{\infty}(H_N^+(\Gamma))$ is a factor for all $N\ge4$ and all discrete groups $\Gamma$.
\end{rque}
\subsection{Haagerup approximation property for the dual of $H_N^+(\Gamma)=\widehat{\Gamma}\wr_*S_N^+$, $\Gamma$ finite}\
In this subsection, we explain how to extend the main result of \cite{Lem13}, which deals with the case $\Gamma=\Z/s\Z$ with $s\in [1,+\infty)$, to a general finite group $\Gamma$. We assume $N\ge4$ and we set $\mathbb{G}=H_N^+(\Gamma)$. If $\alpha\in \text{Irr}(\mathbb{G})$, let $L_{\alpha}^2(\mathbb{G}):=\text{span}\{\Lambda_h(x) : \text{supp}(x)=\alpha\}\subset L^2(\mathbb{G})$ where $\Lambda_h$ is the GNS map associated to the GNS representation of the Haar state $h$ of $\mathbb{G}$.

Using the fundamental result in \cite[Theorem 3.7]{Bra12}, we can produce a net of normal, unital, completely positive $h$-preserving maps on $L^{\infty}(\mathbb{G})$ given by
$$T_{\phi_x}=\sum_{\alpha\in \text{Irr}(\mathbb{G})}\frac{\phi_x(\chi_{\overline{\alpha}})}{d_{\alpha}}P_{\alpha}.$$
In this formula, $\phi_x=ev_x\circ\pi$ with 
\begin{enumerate}
\item[$\bullet$] $ev_x$ the evaluation map in $x\in I_N=[4,N], N\ge5$ ($I_4=[0,4]$) on functions of $C(S_N^+)_0\simeq C([0,N])$, see \cite[Proposition 4.7, 4.8]{Bra12},
\item[$\bullet$] $\pi$ is the canonical map $\pi : C(H_N^+(\Gamma))\to C(S_N^+)$,
\item[$\bullet$] $P_{\alpha} : L^2(\mathbb{G})\to L^2(\mathbb{G})_{\alpha}$ is the orthogonal projection associated to $\alpha\in \text{Irr}(\mathbb{G})$.
\end{enumerate}

We now introduce a proper function on the monoid $S$ (see Theorem \ref{alter}). Let $L$ be defined by $L(\alpha)=\sum_{i=1}^{k_{\alpha}}l_i$ for $\alpha=a^{l_1}z_{g_1}\dots a^{l_{k_{\alpha}}}$ with $g_1,\dots,g_{k-1}\ne e$. Notice that, if $\Gamma$ is finite, for all $R>0$ the set $$B_R=\left\{\alpha=a^{l_1}z_{g_1}\dots a^{l_{k_{\alpha}}}: L(\alpha)=\sum_{i=1}^{k_{\alpha}}l_i\le R\right\}\subset S$$ is finite. Thus we get for a net $(f_{\alpha})_{\alpha}$: $$(f_{\alpha})_{\alpha\in S}\in c_0(S)\Longleftrightarrow\forall\epsilon>0\ \exists R>0: \forall\alpha\in S, \left(L(\alpha)>R\Rightarrow|f_{\alpha}|<\epsilon\right).$$ We say that a net $(f_{\alpha})_{\alpha}$ converges to $0$ as $\alpha\to\infty$ if $(f_{\alpha})_{\alpha}\in c_0(S)$.
One can prove, as in \cite[Proposition 3.3, Proposition 3.4]{Lem13}, that the net 
$\left(\dfrac{\phi_x(\chi_{\overline{\alpha}})}{d_{\alpha}}\right)_{x\in I_N}$ converges to $0$ as $\alpha\to\infty$ so that the extensions $T_{\phi_x} : L^2(H_N^+(\Gamma))\to L^2(H_N^+(\Gamma))$ are compact operators. The pointwise convergence to the identity of these operators, in $2$-norm, can be proved as in \cite[Theorem 3.5]{Lem13}. Then:

\begin{thm}
The dual of $H_N^+(\Gamma)=\widehat{\Gamma}\wr_* S_N^+$ has the Haagerup property for all finite groups $\Gamma$ and $N\ge4$.
\end{thm}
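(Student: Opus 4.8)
The plan is to produce the data witnessing the Haagerup property for the discrete dual of $H_N^+(\Gamma)$, namely a net of normal, unital, completely positive, $h$-preserving maps on $L^\infty(H_N^+(\Gamma))$ that extend to compact operators on $L^2(H_N^+(\Gamma))$ and converge to the identity pointwise in $2$-norm. The natural candidate is the net $(T_{\phi_x})_{x\in I_N}$ constructed just above, and the whole argument reduces to the scalar analysis of the central coefficients
\[
c_\alpha(x):=\frac{\phi_x(\chi_{\overline\alpha})}{d_\alpha},\qquad \alpha\in S,\ x\in I_N .
\]
The point I would stress is that, by Theorem \ref{nonalter}, Theorem \ref{alter} and the dimension formula, the fusion rules and dimensions of $H_N^+(\Gamma)$ coincide with those of $H_N^{s+}$, so these coefficients are \emph{the same} functions of the exponent pattern of $\alpha$ as in \cite{Lem13}; only their indexing monoid $S$ changes with $\Gamma$.

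First I would check that $(T_{\phi_x})$ has the correct algebraic type. For $x\in I_N\subset[0,N]$ the evaluation $ev_x$ is a character, hence a state, on $C(S_N^+)_0\simeq C([0,N])$, and $\pi$ intertwines the coproducts; thus $\phi_x=ev_x\circ\pi$ is a normalised central function of positive type on the dual, and \cite[Theorem 3.7]{Bra12} yields that $T_{\phi_x}$ is normal, unital, completely positive and $h$-preserving, as recorded above the statement.

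The crucial step is compactness of each $T_{\phi_x}$ on $L^2(H_N^+(\Gamma))$. Writing $\alpha=a^{l_1}z_{g_1}\cdots a^{l_{k}}$, the identities $\pi(\chi_\alpha)(X^2)=\prod_i A_{l_i}(X)$ and $d_\alpha=\prod_i A_{l_i}(\sqrt N)$ give
\[
c_\alpha(x)=\prod_{i=1}^{k}\frac{A_{l_i}(\sqrt x)}{A_{l_i}(\sqrt N)} .
\]
For $x<N$ every factor lies in $[0,1]$ and tends to $0$ as $l_i\to\infty$, so, exactly as in \cite[Proposition 3.3, Proposition 3.4]{Lem13}, $c_\alpha(x)\to 0$ as the length $L(\alpha)=\sum_i l_i\to\infty$. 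Here the hypothesis that $\Gamma$ is finite enters decisively: the level sets $B_R=\{\alpha\in S: L(\alpha)\le R\}$ are finite \emph{because} there are only finitely many letters $g_i$ available, so $(c_\alpha(x))_{\alpha}\in c_0(S)$ and $T_{\phi_x}$ is a norm limit of the finite-rank maps $\sum_{\alpha\in B_R}c_\alpha(x)P_\alpha$, hence compact. I expect this to be the main obstacle, in the sense that it is the only place where finiteness of $\Gamma$ is used and where the multiplicative structure of $c_\alpha$ over the letters of $\alpha$ must be combined with the finiteness of $B_R$ to upgrade mere decay in $L(\alpha)$ to genuine membership in $c_0(S)$.

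Finally I would establish pointwise convergence $T_{\phi_x}\to\mathrm{id}$ in $2$-norm as $x\to N$. For fixed $\alpha$ one has $c_\alpha(x)\to c_\alpha(N)=1$, and the uniform bound $0\le c_\alpha(x)\le 1$ permits the standard $\varepsilon/3$ estimate of \cite[Theorem 3.5]{Lem13} on the dense $*$-subalgebra $\Pol(H_N^+(\Gamma))$. Assembling these three properties gives the Haagerup property for the dual of $H_N^+(\Gamma)=\widehat{\Gamma}\wr_* S_N^+$ for every finite group $\Gamma$ and all $N\ge4$.
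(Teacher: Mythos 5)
Your proposal is correct and follows essentially the same route as the paper: the same net $T_{\phi_x}$ with $\phi_x=ev_x\circ\pi$, the same reduction to the scalar coefficients $\phi_x(\chi_{\overline\alpha})/d_\alpha=\prod_iA_{l_i}(\sqrt x)/A_{l_i}(\sqrt N)$ via the dimension formula, the same use of the finiteness of the balls $B_R$ (the only place finiteness of $\Gamma$ enters) to get membership in $c_0(S)$ and hence compactness, and the same $2$-norm convergence argument imported from \cite{Lem13}. The only difference is that you spell out the computation of the coefficients which the paper leaves to the citations of \cite[Propositions 3.3, 3.4, Theorem 3.5]{Lem13}.
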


\section*{Acknowledgements}
I am very grateful to Roland Vergnioux for the time he spent discussing the arguments of this article. I would also like to thank Pierre Fima, Uwe Franz, Amaury Freslon and Moritz Weber for discussions, suggestions or commentaries which helped produce this article.

\bibliographystyle{alpha}
\nocite{*}
\bibliography{FWPHAP}

\end{document}